\definecolor{darkblue}{rgb}{0,0,1}
\newtheorem{theorem}{Theorem}
\newtheorem{lemma}{Lemma}
\newtheorem{assumption}{Assumption}
\newtheorem{remark}{Remark}
\newtheorem{corollary}{Corollary}
\newcommand\prox[1]{\mathrm{prox}_{#1}}
\newcommand\proxp[1]{\mathrm{prox}_{\gamma\varphi}\left(#1\right)}
\newcommand\proxpi[1]{\mathrm{prox}_{\gamma\varphi}(#1)}
\newcommand\Fnor[1]{F^{\gamma}_{\mathrm{nor}}(#1)}
\newcommand{\order}[1]{\mathcal{O}\left(#1\right)}
\newcommand{\prt}[1]{\left(#1\right)}
\newcommand{\brk}[1]{\left[#1\right]}
\newcommand{\crk}[1]{\left\{#1\right\}}
\newcommand{\orderi}[1]{\mathcal{O}(#1)}
\newcommand{\prti}[1]{(#1)}
\newcommand{\brki}[1]{[#1]}
\newcommand{\crki}[1]{\{#1\}}
\newcommand{\inpro}[1]{\left\langle #1 \right\rangle}
\newcommand{\inproi}[1]{\langle #1\rangle}
\newcommand{\condE}[2]{\E\brk{#1\middle|#2}}
\newcommand{\condEi}[2]{\E[#1|#2]}
\newcommand{\norm}[1]{\left\Vert #1 \right\Vert}
\newcommand{\normi}[1]{\Vert #1 \Vert}
\newcommand{\bs}[1]{\boldsymbol{#1}}
\newcommand{\hLambda}{\hat{\Lambda}}
\newcommand{\R}{\mathbb{R}}
\newcommand{\E}{\mathbb{E}}
\newcommand{\x}{\mathbf{x}}
\newcommand{\z}{\mathbf{z}}
\newcommand{\g}{\mathbf{g}}
\newcommand{\1}{\mathbf{1}}
\newcommand{\T}{\intercal}
\newcommand{\sumn}{\sum_{i=1}^n}
\newcommand{\cN}{\mathcal{N}}
\renewcommand{\d}{\mathbf{d}}
\newcommand{\y}{\mathbf{y}}
\newcommand{\s}{\mathbf{s}}
\newcommand{\hU}{\hat{U}}
\newcommand{\cF}{\mathcal{F}}
\newcommand{\cL}{\mathcal{L}}
\newcommand{\0}{\mathbf{0}}
\newcommand{\uf}{f^*}
\newcommand{\sigfn}{\sigma_f^*}
\newcommand{\cC}{\mathcal{C}}
\newcommand{\cH}{\mathcal{H}}
\newcommand{\cA}{\mathcal{A}}
\newcommand{\KTN}{K_{\text{Transient}}^{(\text{NCVX})}}
\newcommand{\Ln}{L_{\mathrm{F}}}
\newcommand{\Fnorb}[1]{\boldsymbol{F}^{\gamma}_{\mathrm{nor}}(#1)}
\newcommand{\Fnori}[2]{{F}^{\gamma}_{#1,\mathrm{nor}}(#2)}
\newcommand{\barz}{\1\bar{z}^{\T}}
\newcommand{\ux}{\underline{x}}
\newcommand{\bpsi}{\bar{\psi}}
\newcommand{\cM}{\mathcal{M}}
\newcommand{\C}{C_0}
\newcommand{\tk}{\boldsymbol{k}}
\newcommand{\bproxp}[1]{\boldsymbol{\mathrm{prox}}_{\gamma\varphi}\left(#1\right)}
\newcommand{\bu}{\mathbf{u}}
\newcommand{\ceili}[1]{\lceil#1\rceil}
\newcommand{\normgt}{norM-DSGT}
\newcommand{\normed}{norM-ED}
\newcommand{\LA}{L_{\mathrm{nat}}}
\newcommand{\U}{\mathcal{U}}
\newcommand\dom[1]{\mathrm{dom}(#1)}
\newcommand\Fnat[1]{F^{\gamma}_{\mathrm{nat}}(#1)}
\newcommand\diag{\mathrm{diag}}
\newcommand{\cS}{\mathcal{S}}
\newcommand{\ccD}{\mathcal{T}}
\newcommand{\tc}{\chi}
\definecolor{cuhkpl}{RGB}{152,24,147} 	
\def\BibTeX{{\rm B\kern-.05em{\sc i\kern-.025em b}\kern-.08em
    T\kern-.1667em\lower.7ex\hbox{E}\kern-.125emX}}
\begin{document}
\title{Distributed Normal Map-based Stochastic Proximal Gradient Methods over Networks}
\author{Kun Huang, Shi Pu, and Angelia Nedi\'{c}
\thanks{This work was partially supported by the National Natural Science Foundation of China (NSFC) under Grant 62373316 and Grant 62336005. Kun Huang and Shi Pu are with the School of Data Science, The Chinese University of Hong Kong, Shenzhen 518172, China. Angelia Nedi\'{c} is with the School of Electrical, Computer and Energy Engineering, Arizona State University, Tempe, AZ, 85281, USA (e-mail: kunhuang@link.cuhk.edu.cn; pushi@cuhk.edu.cn; angelia.nedich@asu.edu).
}}

\maketitle

\begin{abstract}

Consider $n$ agents connected over a network collaborating to minimize the average of their local cost functions combined with a common nonsmooth function.
This paper introduces a unified algorithmic framework for solving such a problem through distributed stochastic proximal gradient methods, leveraging the normal map update scheme. 
Within this framework, we propose two new algorithms, termed Normal Map-based Distributed Stochastic Gradient Tracking (norM-DSGT) and Normal Map-based Exact Diffusion (norM-ED).
We demonstrate that both methods can asymptotically achieve comparable convergence rates to the centralized stochastic proximal gradient descent method under a general variance condition on the stochastic gradients. Additionally, the number of iterations required for norM-ED to achieve such a rate (i.e., the transient time) behaves as $\mathcal{O}(n^{3}/(1-\lambda)^2)$ for minimizing composite objective functions, matching the performance of the non-proximal ED algorithm. Here $1-\lambda$ denotes the spectral gap of the mixing matrix related to the underlying network topology. To our knowledge, such a convergence result is state-of-the-art for the considered composite problem. Under the same condition, norM-DSGT enjoys a transient time of {$\mathcal{O}(\max\{n^3/(1-\lambda)^2, n/(1-\lambda)^3\})$, { which matches that of the non-proximal DSGT algorithm and norM-ED under the condition $(1-\lambda)^{-1}=\mathcal{O}(n^{2})$}.}

\end{abstract}

\begin{IEEEkeywords}
Distributed optimization, Stochastic optimization, Nonconvex optimization, Nonsmooth optimization
\end{IEEEkeywords}

\section{Introduction}
\label{sec:introduction}
\IEEEPARstart{I}{n} this paper, we investigate how a group of networked agents $\cN:=\crki{1,2,\ldots, n}$ collaborate to solve the following distributed composite optimization problem:
\begin{equation}
    \label{eq:P}
    \min_{x\in \R^p} \psi(x):= f(x) + \varphi(x),\; f(x) := \frac{1}{n}\sumn f_i(x),
\end{equation}
where each agent $i$ only has access to its local objective function $f_i:\R^p\rightarrow\R$ and the possibly nonsmooth function $\varphi:\R^p\rightarrow (-\infty, +\infty]$. The function $\varphi$ can capture regularization terms, constraints, and penalties. For instance, when $\varphi$ is an indicator function of a feasible set,
Problem \eqref{eq:P} becomes the distributed constrained optimization problem \cite{beck2017first}. 
The rich choices of $\varphi$ enable broad applications of Problem \eqref{eq:P}, including machine learning \cite{bottou2018optimization, lecun2015deep}, sparse regression \cite{ji2023distributed}, and { control \cite{tron2016distributed,li2015vision}}. 

In this work, we assume $\varphi$ is a weakly convex, lower semicontinuous, proper function, and that each $f_i$ is continuously differentiable on an open set containing $\dom{\varphi}:= \crki{x\in\R^p: \varphi(x)<\infty}$, thereby encompassing most of the aforementioned applications.
We also assume that the proximal operator
\[\proxp{x}:= \arg\min_{y\in\R^{p}} \varphi(y) + \frac{1}{2\gamma}\normi{y-x}^2\in \dom{\varphi},\]
is easy to compute for $\gamma>0$. Typical examples of $\varphi$ include $\ell_1$-norm, discrete information divergence \cite{el2017proximity}, elastic net \cite{zou2005regularization}, among others; see, e.g., \cite{chierchia2020proximity}. 

To solve Problem \eqref{eq:P}, we assume each agent is able to query noisy or stochastic gradients $g_i(x;\xi_i)$ of $\nabla f_i(x)$. This is particularly relevant in data-intensive settings, such as deep learning, where querying full gradients can be computationally prohibitive. 
The stochastic gradients are assumed to satisfy the so-called ABC condition \cite{lei2019stochastic,khaled2022better,huang2023distributed}: 
\begin{equation}
    \label{eq:ABC_full}
    \E_{\xi_{i}}\brk{\norm{g_i(x;\xi_i) - \nabla f_i(x)}^2\middle| x}\leq \C\brk{f_i(x) - f_i^*}  + \sigma^2,
\end{equation}
for some positive constants $\C$ and $\sigma$, where $\xi_i$ denotes some random variable and $f_i^*:= \inf_{x\in\dom{\varphi}} f_i(x)$.
Condition \eqref{eq:ABC_full} generalizes the typical bounded variance condition ($\C = 0$), which is often difficult to verify and can be violated in practice.  
By contrast, the ABC condition applies to more practical scenarios such as sampling a mini-batch of data points to obtain the stochastic gradients \cite{lei2019stochastic,khaled2022better,huang2023distributed}. 
Concrete examples can be found in \cite{khaled2022better,huang2023distributed}.

Despite the importance of solving Problem \eqref{eq:P} with stochastic gradients, the development of relevant algorithms is less advanced, particularly compared to the rich literature that considers $\varphi(x) \equiv 0$.
Furthermore, most existing research suggests that distributed stochastic proximal gradient methods that solve Problem \eqref{eq:P} suffer from slower convergence compared to centralized stochastic proximal gradient method (Prox-SGD) \cite{davis2019stochastic,li2022unified}, especially when the number of agents increases.
For instance, the study in \cite{idrees2024analysis} demonstrates a convergence rate of $\orderi{1/[(1-\lambda)^2 K^{2/3}]}$ for solving Problem \eqref{eq:P} under the Mean Squared Smoothness (MSS) condition \cite{arjevani2023lower} with additional variance reduction procedures, where $K$ stands for the number of iterations, and $1-\lambda$ denotes the spectral gap of the mixing matrix related to the underlying network topology. Another work \cite{li2021decentralized} reports a convergence rate of $\orderi{1/[(1-\lambda)^2 K]}$ for solving Problem \eqref{eq:P} when $f$ is smooth and strongly convex. 
For sparse networks such as ring graphs, $\orderi{1/(1-\lambda)}$ behaves as $\orderi{n^2}$ \cite{nedic2018network}. Therefore, the above two results indicate a significant slowdown in convergence rates for distributed stochastic proximal gradient methods.

Only the recent work \cite{xiao2023one} demonstrates that distributed stochastic proximal gradient methods can benefit from an increasing number of agents and enjoy the so-called ``asymptotic network independence (ANI)'' property \cite{pu2020asymptotic}, that is, the convergence rate of distributed methods matches that of centralized implementation (e.g., $\orderi{1/\sqrt{nK}}$ for smooth 
nonconvex $f$) after a sufficient number of iterations called the \textit{transient time}.
However, the transient time reported in \cite{xiao2023one} behaves as $\orderi{n^3/(1-\lambda)^4}$ when $f$ is smooth and possibly nonconvex, which is much larger than the $\orderi{n^3/(1-\lambda)^2}$ transient time established in previous works that consider $\varphi(x) \equiv 0$ \cite{huang2023cedas,alghunaim2021unified}. Such a discrepancy is particularly pronounced in sparse networks, limiting its applicability in large-scale networks.

We notice that the unsatisfactory transient time reported in \cite{xiao2023one} can be partly attributed to the update scheme of the typical Prox-SGD method: {$x_{k+1}=\prox{\alpha\varphi}\prti{x_k - \alpha g(x_k;\xi_k)}$} \cite{davis2019stochastic}. 
More specifically, Prox-SGD may introduce bias even when the stochastic gradients are unbiased, since we generally have $\E_{\xi_k}\brki{\prox{\alpha\varphi}\prti{x_k - \alpha g(x_k;\xi_k)}| x_k}\neq \prox{\alpha\varphi}\prti{x_k - \alpha \nabla f(x_k)}$ given that $\E_{\xi_k}[g(x_k;\xi_k)| x_k] = \nabla f(x_k)$
at the $k$-th iteration. Such a bias can adversely affect the consensus update in distributed optimization methods, resulting in unsatisfactory performance.
To address this issue, we apply the \textit{normal map} update scheme \cite{robinson1992normal,milzarek2023convergence,ouyang2024trust} to distributed stochastic proximal gradient methods. The normal map update with stochastic gradient $g(x_k;\xi_k)$ performs the following core step:
\begin{equation}
    \label{eq:intro_np}
    \begin{aligned}
        z_{k + 1} &= z_k - \alpha \brk{g(x_k;\xi_k) + \gamma^{-1}\prt{z_k - x_k}},\\
        x_{k + 1} &= \proxp{z_{k + 1}}.
    \end{aligned}
\end{equation}
It {can be seen} that \eqref{eq:intro_np} preserves the unbiasedness since
\begin{align*}
    &\E_{\xi_k} \crk{z_k - \alpha \brk{g(x_k;\xi_k) + \gamma^{-1}\prt{z_k - x_k}}\middle| x_k,z_k}\\
    & = z_k - \alpha \brk{\nabla f(x_k) + \gamma^{-1}\prt{z_k - x_k}},
\end{align*}
given that $\E_{\xi_k} [g(x_k;\xi_k)\mid x_k] = \nabla f(x_k)$.

Inspired by the observation above, we introduce a unified algorithmic framework for distributed stochastic proximal gradient methods using the normal map update, termed Normal Map-based Stochastic ABC-2 (norM-SABC-2), and propose two algorithms: Normal Map-based Distributed Stochastic Gradient Tracking (\normgt) and Normal Map-based Exact Diffusion (\normed), within this framework. The proposed methods 
avoid the bias introduced by distributed Prox-SGD method and { reduce} the transient times for solving Problem \eqref{eq:P} compared to the earlier work \cite{xiao2023one} { without additional communication or notable computational overhead}. In particular, the \normed~method attains the same transient time as its non-proximal counterpart in \cite{huang2023cedas,alghunaim2021unified}, { which is the shortest known for solving Problem~\eqref{eq:P}.}

\subsection{Related Works}

We focus on the recent advances in distributed composite optimization that considers Problem \eqref{eq:P}. 
The study of Problem~\eqref{eq:P} with full gradients available dates back to the seminal work \cite{nedic2010constrained}, where $\varphi$ represents the indicator function of a common constrained set. Subsequent works including \cite{zeng2018nonconvex,chen2012fast} expand to the case where $\varphi$ is a convex function. However, the proposed algorithms are based on the Distributed Gradient Descent (DGD) method \cite{nedic2009distributed} which does not converge to the exact solution under a constant stepsize. 
To improve the algorithmic convergence, there are mainly two lines of works, including \cite{di2016next,ye2020decentralized} that apply the gradient tracking technique \cite{xu2015augmented,di2016next,nedic2017achieving}, and primal-dual type methods including \cite{shi2015proximal,li2019decentralized,alghunaim2019linearly,guo2023decentralized} which are inspired by earlier works assuming $\varphi=0$ \cite{shi2015extra,yuan2018exact}.
Specifically, the method in \cite{ye2020decentralized} utilizes multiple inner loops of communication at every iteration
to obtain the optimal complexity when the smooth component $f$ is strongly convex. 
The work \cite{di2016next} applies the proximal step to a surrogate function and proves the asymptotic convergence for the considered method. Regarding the primal-dual type algorithms, those in \cite{shi2015proximal,li2019decentralized,guo2023decentralized} allow for different nonsmooth components across the agents and enjoy the sublinear convergence when $f$ is convex. 
The paper \cite{alghunaim2019linearly} establishes the linear convergence for the proposed method under a shared $\varphi$ and strongly convex smooth component $f$. In \cite{xu2021distributed,alghunaim2020decentralized}, both gradient tracking-based and primal-dual type methods are unified under a shared $\varphi$, and linear convergence rates are demonstrated for strongly convex $f$. 

When only noisy gradients of $f_i$ are available, the study on distributed stochastic proximal gradient methods is less advanced. The early work in \cite{bianchi2012convergence} establishes the asymptotic convergence of the distributed projected stochastic gradient method. When large mini-batches are available, the authors in \cite{wang2021distributed} prove convergence with momentum updates. The work in \cite{xin2021stochastic} demonstrates a topology-independent sample complexity, provided that large mini-batches of data samples and multiple inner loops of communication are utilized. However, the requirement for large mini-batches or multiple inner loops of communication can be impractical in real-world applications.
The works in \cite{yan2023compressed,li2021decentralized} employ communication compression { and achieve sublinear convergence rates, but do not demonstrate the ANI property.}
To our knowledge, only \cite{xiao2023one,olshevsky2022asymptotic} demonstrate the ANI property. Notably, the methods proposed in \cite{xiao2023one} involve momentum-like update and require communicating additional variables. The work \cite{olshevsky2022asymptotic} establishes the ANI property for distributed stochastic subgradient methods under specific stepsizes while the transient time remains unknown.

\subsection{Main Contribution}

The main contribution of this paper is three-fold.

Firstly, { we introduce norM-SABC-2, the first unified algorithmic framework for normal map-based distributed stochastic proximal gradient methods, and propose two algorithms within it: \normgt~and \normed.} 
Both methods exhibit the asymptotic network independence (ANI) property with improved transient times. In particular, under smooth and possibly nonconvex $f$ and weakly convex $\varphi$, \normgt~reduces the state-of-the-art transient time from $\orderi{n^3/(1-\lambda)^4}$ to {$\orderi{\max\crki{n^3/(1-\lambda)^2,n/(1-\lambda)^3}}$}, and \normed~reduces the transient time to $\orderi{n^3/(1-\lambda)^2}$. Notably, \normed~is the first distributed stochastic proximal gradient method to match the transient time of the non-proximal ED algorithm. { In addition, \normgt~matches the transient time of DSGT and norM-ED under the condition $(1-\lambda)^{-1}=\orderi{n^{2}}$, which is generally satisfied in practice \cite{nedic2018network}}. { See Table~\ref{tab:comp} for a detailed comparison.}

Secondly, we make less restrictive assumptions compared to the previous works to obtain the enhanced results. Specifically, this paper assumes only weak convexity on $\varphi$ and the ABC condition on the stochastic gradients, and does not impose any additional data heterogeneity conditions among the individual functions $f_i$.
Given the smoothness of $f$, these relaxed assumptions represent the most general conditions for solving Problem~\eqref{eq:P}, even considering the centralized setting.

Thirdly, we explore a flexible analytical framework for distributed stochastic proximal gradient methods, which accommodates multi-step analysis with an arbitrary number of steps $m\geq 1$. Compared to previous works that are tailored to specific algorithms, our analysis encloses a wide range of distributed stochastic proximal gradient algorithms, provided that their averaged iterates update as a centralized method with some additional error terms. Moreover, by choosing $m$ appropriately, the framework extends naturally to other problem settings, including utilizing the random reshuffling strategy for sampling stochastic gradients and optimization over time-varying networks.

\begin{table}[htbp]
    \centering
    \setlength{\tabcolsep}{1pt}
    {
\begin{tabular}{@{}ccccc@{}}
\toprule
Method                                    & $\varphi$     & Var          & Transient Time                                                                     & \makecell[c]{Transient Time \\ (Ring graph)} \\ \midrule
DEPOSITUM \cite{zhou2025decentralized}    & WX          & BD           & /                                                                                  & /                                            \\
DProxSGT \cite{yan2023compressed}         & CX           & BD           & /                                                                                  & /                                            \\
Prox-DASA(-GT) \cite{xiao2023one}         & CX           & BD           & $\order{\frac{n^3}{(1-\lambda)^4}}$                                                & $\order{n^{11}}$                             \\ \midrule 
DSGT \cite{alghunaim2021unified} & / & BD & $\order{\frac{n^3}{(1-\lambda)^2} + \frac{n}{(1-\lambda)^{8/3}}}$ & $\order{n^{7}}$ \\ 
\makecell{\textbf{\normgt}\\ (This work)} & \textbf{WX} & \textbf{ABC} & $\order{\bs{\frac{n^3}{(1-\lambda)^2}}+\bs{\frac{n}{(1-\lambda)^{3}}}}$ & $\order{\bs{n^{7}}}$                         \\ \midrule
ED \cite{alghunaim2021unified} & / & BD & $\order{\frac{n^3}{(1-\lambda)^2}}$ & $\order{n^{7}}$ \\
\makecell{\textbf{\normed}\\ (This work)} & \textbf{WX} & \textbf{ABC} & $\order{\bs{\frac{n^3}{(1-\lambda)^2}}}$                                           & $\order{\bs{n^{7}}}$                         \\ 
\bottomrule
\end{tabular}
\caption{ Comparison of assumptions and transient time performance. The ``$\varphi$'' column indicates assumptions on the nonsmooth function $\varphi$ ( ``CX'': convex; ``WX'': weakly convex). The ``Var'' column specifies the variance condition on stochastic gradients (``BD": bounded variance condition; ``ABC": ABC condition). Ring graph results correspond to $1-\lambda=\orderi{n^{-2}}$. 
    }
    \label{tab:comp}}
\end{table}

\subsection{Notations and Assumptions}

Throughout this paper, column vectors are considered by default unless specified otherwise. Let $x_{i,k},y_{i,k},z_{i,k} \in \R^p$ represent the iterates of agent $i\in\cN:=\{1,2,\ldots,n\}$ at the $k$-th iteration. { We introduce the following stacked variables; the others are defined analogously.}
\begin{align*}
    \x_k&:= (x_{1,k}, x_{2, k},\ldots, x_{n,k})^{\T}\in\R^{n\times p},\\
    \nabla F (\x_k) &:= \prt{\nabla f_1(x_{1,k}),\ldots, \nabla f_n(x_{n,k})}^{\T}\in\R^{n\times p},\\
    \bproxp{\z_k}&:= (
        \proxp{z_{1, k}},
        \ldots, \proxp{z_{n, k}}
    )^{\T}\in\R^{n\times p}.
\end{align*}
The average of $x_i$ across all the agents is denoted as $\bar{x} \in \R^p$.
We use $\normi{\cdot}$ to denote the Frobenius norm for a matrix and the $\ell_2$-norm for a vector by default. The inner product of two vectors $a, b\in\R^{p}$ is written as $\inproi{a, b}$. For two matrices $A, B\in\R^{n\times p}$, the inner product $\inproi{A, B}$ is defined as $\inproi{A, B} := \sum_{i=1}^n\inproi{A_i, B_i}$, where $A_i$ (and $B_i$) represents the $i$-row of $A$ (and $B$).

We now introduce the standing assumptions considered in this paper. 

Assumption \ref{as:smooth} is standard in the literature \cite{huang2024accelerated,huang2023distributed} when $\varphi(x)\equiv 0$. 
\begin{assumption}
    \label{as:smooth}
    Each $f_i:\R^p\rightarrow\R$ is $L$-smooth on an open set $\U$ containing $\dom\varphi$, meaning that
    \begin{align*}
        \norm{\nabla f_i(x) - \nabla f_i(x')}\leq L\norm{x - x'}, \ \forall x,x'\in \U.
    \end{align*}
    In addition, each $f_i$ is bounded below, i.e., $f_i(x)\geq \uf_i := \inf_{x\in\dom{\varphi}} f_i(x)>-\infty, \forall x\in\R^p$. We also denote $f^*:= \inf_{x\in\dom{\varphi}} f(x)$.
\end{assumption}

Assumption \ref{as:phi} considers a more general condition compared to previous works (see e.g., \cite{xiao2023one,shi2015proximal}), where $\varphi$ is $\rho$-weakly convex, i.e., $\varphi(x) + \rho\normi{x}^2/2$ is convex for some $\rho > 0$. 

\begin{assumption}
    \label{as:phi}
    The function $\varphi: \R^{p} \rightarrow (-\infty, \infty]$ is $\rho$-weakly convex, lower semicontinuous, proper, { and bounded below}. We denote $\varphi^*:= \inf_{x\in\dom{\varphi}} \varphi(x)$.
\end{assumption}

We consider the ABC condition on the stochastic gradients in Assumption \ref{as:abc}, which is the most general variance condition under the distributed optimization setting \cite{huang2024accelerated,huang2023distributed,khaled2022better}. 
\begin{assumption}
    \label{as:abc}
    Each agent has access to a conditionally unbiased stochastic gradient $g_i(x;\xi_i)$ of $\nabla f_i(x)$, i.e., $\E_{\xi_i}\brki{g_i(x;\xi_{i})| x} = \nabla f_i(x)$, and there exist constants $\C\geq0$ and $\sigma\geq 0$ such that for any $k\in\mathbb{N}$, $i\in\cN$, and $\forall x\in\U$, 
    \begin{align}
        \label{eq:abc}
        \E_{\xi_i}\brk{\norm{g_{i}(x;\xi_{i}) - \nabla f_i(x)}^2\middle|{x}}&\leq \C\brk{f_i(x) - \uf_i} + \sigma^2
    \end{align} 
    for an open set $\U$ containing $\dom\varphi$. In addition, the stochastic gradients are independent across different agents at each $k\geq 0$. 
\end{assumption}

{ Under Assumptions \ref{as:smooth}--\ref{as:abc}, Problem~\eqref{eq:P} covers a broader class of problems than existing works such as \cite{xiao2023one,alghunaim2021unified,milzarek2023convergence,ouyang2024trust}. Specifically, $\varphi$ can be chosen as the smoothly clipped absolute deviation (SCAD) regularizer \cite{fan2001variable}, which is weakly convex but not convex, and $f_i$ can be set as in \cite[Section VI-A]{huang2023distributed} to satisfy the ABC condition while violating the bounded variance assumption.
}

{ The agents communicate over} {an undirected} graph $\mathcal{G} = (\cN, \mathcal{E})$ with $\mathcal{E}\subseteq \cN\times \cN$ representing the set of edges. In particular, $(i,i)\in\mathcal{E}$ for all $i\in\mathcal{N}$. The set of neighbors for agent $i$ is denoted by $\mathcal{N}_i=\{j\in \mathcal{N}:(i,j)\in \mathcal{E}\}$.
The element $w_{ij}$ in the weight matrix $W\in\mathbb{R}^{n\times n}$ represents the weight of the edge between agents $i$ and $j$. 
{ Consequently}, we consider Assumption \ref{as:graph} that is standard in the distributed optimization literature. The condition guarantees that the spectral norm $\lambda$ of the matrix $(W - \1\1^{\T}/n)$ is strictly less than one. 

\begin{assumption}
    \label{as:graph}
    The graph $\mathcal{G}$ is undirected and {connected}, i.e., there exists a path between any two {distinct} nodes in $\mathcal{G}$. There is a direct link between $i$ and $j$ $(i\neq j)$ in $\mathcal{G}$ if and only if $w_{ij}>0$ and $w_{ji}>0$; otherwise, $w_{ij}=w_{ji}=0$. The mixing matrix $W$ nonnegative, stochastic, and symmetric, i.e., $\1^{\T}W=\1^{\T}$ and $W^{\T}=W$. 
\end{assumption}



\section{Normal Map-based Distributed Stochastic Proximal Gradient Methods}
\label{sec:methods}

In this section, we first introduce the definition of \textit{normal map} $\Fnor{\cdot}:\R^p\rightarrow \R^p$, as established in \cite{robinson1992normal}, along with a critical property~\eqref{eq:Fnor_unbiased} in subsection~\ref{subsec:motivation}. Such a property further inspires the unified framework stated in \eqref{eq:norm-abc}. Subsection~\ref{subsec:algs} then presents the proposed algorithms within the unified framework.  

\subsection{Motivation and Unified Framework}
\label{subsec:motivation}

The normal map $\Fnor{\cdot}:\R^p\rightarrow\R^p$ is defined as:
\begin{equation}
    \label{eq:Fnor}
    \begin{aligned}
        \Fnor{z}&:= \nabla f(\proxp{z}) + \frac{1}{\gamma}\prt{z - \proxp{z}}\\
        &\in \partial \psi\prt{\proxp{z}},\; \forall z\in\R^p,
    \end{aligned}
\end{equation}
{where
$\partial\psi(x)$ denotes the subdifferential set of $\psi(x)$ satisfying}
$\partial\psi(x) = \nabla f(x) + \partial\varphi(x)$ \cite{attouch2013convergence}. The normal map $\Fnor{\cdot}$ {provides} a subgradient of the objective function $\psi(\cdot)$. Notably, given an unbiased stochastic gradient $g(x;\xi)$ of $\nabla f(x)$, the normal map $\Fnor{\cdot}$ preserves the unbiasedness:
\begin{equation}
    \label{eq:Fnor_unbiased}
        \E_{\xi}\brk{g(\proxp{z};\xi) + \gamma^{-1}\prt{z - \proxp{z}}{\mid z}} = \Fnor{z},
\end{equation}
for any $z\in\R^p$.
Such a property enables the use of existing analytical tools developed for the full gradient case. By contrast, the unbiasedness of $g(x;\xi)$ is not preserved for traditional stochastic proximal gradient methods, since $\E_\xi \brki{\proxp{x - \gamma g(x;\xi)}{\mid x}} \neq \proxp{x - \gamma \nabla f(x)}$ in general. As a result, the use of previous analytical tools for proximal gradient methods may introduce additional errors into the variance term $\E_\xi\brki{\normi{g(x;\xi) - \nabla f(x)}^2{\mid x}}$ and further results in unsatisfactory convergence results for existing distributed stochastic proximal gradient methods. By considering \eqref{eq:Fnor_unbiased} in developing and analyzing new algorithms, the additional term $1/(1-\lambda)$ in the variance term can be avoided (see the proof of Lemma \ref{lem:norm-abc_cons_tw}), thereby improving the transient times.

From the above discussion, it is natural to define the normal map for each local function $\psi_i:= f_i + \varphi$ as{$\Fnori{i}{z}:= \nabla f_i(\proxpi{z}) + \gamma^{-1}\prti{z - \proxpi{z}}$.}
Then, the update of each agent applies such a subgradient by replacing $\nabla f_i(\proxpi{z})$ with the stochastic gradient $g_i(\proxpi{z};\xi)$. This {provides the} basis for { the following} unified framework: 
{
\begin{equation}
    \label{eq:norm-abc}
    \begin{aligned}
        \x_k& =\bproxp{\z_k},\\
        \z_{k + 1} &= A\crk{C\z_k - \alpha\brk{\g_k + \gamma^{-1}\prt{\z_k - \x_k}}} - B\d_k\\
        & = A\crk{C \z_k - \alpha\brk{\Fnorb{\z_k} + \g_k - \nabla F(\x_k)}} -B\d_k,\\
        \d_{k + 1} &= \d_k + B\z_{k + 1},\; k=0,1,\ldots
    \end{aligned}
\end{equation}
}
{where the initialization of $\d_0$ depends on the choice of the matrices $A$, $B$, and $C$, as well as the initial point $\z_0$.} 
Here, $\Fnorb{\z_k} = \nabla F(\x_k) + \gamma^{-1}\prti{\z_k - \x_k}$. When $\varphi=0$, thus $\Fnorb{\z_k}=\nabla F(\z_k)$, the method in~\eqref{eq:norm-abc} reduces to the one considered in \cite{alghunaim2021unified}.
The unified framework \eqref{eq:norm-abc} is named as Normal Map-based Stochastic ABC-2 (norM-SABC-2), where ABC-2 refers to the existing unified algorithms employing matrices $A$, $B$ and $C$ \cite{xu2021distributed,alghunaim2020decentralized,alghunaim2021unified} and the ABC condition concerning the stochastic gradients \eqref{eq:abc}.
The matrices $A$, $B$, and $C$ are assumed to satisfy {the following assumption}.

\begin{assumption}
    \label{as:abcW}
    The matrices $A, B^2,C\in\R^{n\times n}$ are chosen as a polynomial function of $W$: $ A=\sum_{d=0}^{s}a_d W^d$, $B^2=\sum_{d=0}^{s}b_d W^d$, $C=\sum_{d=0}^{s}c_d W^d$, 
        where $s\geq 0$ is an integer. Moreover, the constants $\{a_d,b_d,c_d\}_{d=0}^s$ are chosen such that $A$ and $C$ are doubly stochastic and the matrix $B$ satisfies $B\x=\0$ if and only if $x_1=x_2=\ldots=x_n$.
\end{assumption}

Choosing different matrices $A$, $B$, and $C$ {in Assumption~\ref{as:abcW}} leads to specific normal map-based distributed stochastic proximal gradient methods.
{Assumption~\ref{as:abcW}} is satisfied by most distributed gradient-based methods.

\subsection{Proposed Algorithms}
\label{subsec:algs}

We introduce two novel algorithms within the proposed framework \eqref{eq:norm-abc}: Normal Map-based Distributed Stochastic Gradient Tracking (\normgt) and Normal Map-based Exact Diffusion (\normed). Specifically, { the \normgt~method outlined in Algorithm \ref{alg:norm-dsgt} is an instance of \eqref{eq:norm-abc} with $A = W$, $B = I-W$, $C=W$, and $\d_0 = -W\z_0$. {This can be seen by comparing the expressions for $\z_{k+1} - \z_k$ in Algorithm \ref{alg:norm-dsgt} and \eqref{eq:norm-abc}, after eliminating $\y_k$ and $\d_k$, respectively. The verification for norM-ED given later is analogous.}} At each iteration, agent $i$ first performs an approximate subgradient step and then communicates with its neighbors to obtain the variable $z_{i, k + 1}$ in Line \ref{line:norm-dsgt_wz}. A proximal step is then applied to $z_{i, k + 1}$ to compute the next iterate, $x_{i, k + 1}$ (Line \ref{line:norm-dsgt_x}). The stochastic gradient is evaluated at $x_{i,k + 1}$ in Line \ref{line:norm-dsgt_g} to maintain the relation in \eqref{eq:Fnor_unbiased}. { The initialization in Line~\ref{line:norm-dsgt_y0} and the update in} Line \ref{line:norm-dsgt_y} are applied to track the overall stochastic subgradients (normal maps){, i.e., $\bar{y}_k = \bar{g}_k + \gamma^{-1}\prt{\bar{z}_k - \bar{x}_k}$, where $\bar{g}_k:= \sumn g_{i,k}/n$, $\bar{z}_k := \sumn z_{i,k}/n$, and $\bar{x}_k := \sumn x_{i,k}/n$}. { Notably, \normgt\ shares the same communication overhead as the non-proximal DSGT algorithm, and saves one round of communication per iteration compared to Prox-DASA-GT \cite{xiao2023one}.}

\begin{algorithm}
	\begin{algorithmic}[1]
		\STATE Initialize $z_{i,0}\in\R^p$ for all agent $i\in\mathcal{N}$, determine $W = [w_{ij}]\in\R^{n\times n}$, stepsize {$\alpha>0$ and parameter $\gamma>0$}. 
        \STATE Calculate $x_{i,0} = \proxp{z_{i,0}}$.
		\FOR{$k=0, 1, 2, ..., K-1$}
		\FOR{Agent $i = 1, 2, ..., n$ in parallel}
        \IF{$k=0$}
        \STATE Query $g_{{i}, 0} = g_i(x_{i, 0};\xi_{i, 0})\in\R^p$.\label{line:norm-dsgt_g0}
        \STATE Initialize $y_{i,0} = g_{i,0} + \gamma^{-1}\prti{z_{i,0} - x_{i,0}}$. \label{line:norm-dsgt_y0}
        \ENDIF
		\STATE Update $z_{i, k + 1} = \sum_{j\in\cN_i}w_{ij} \prti{z_{j,k} - \alpha y_{j,k}}$. \label{line:norm-dsgt_wz}
        \STATE Update $x_{i,k + 1} = \proxp{z_{i,k + 1}}$. \label{line:norm-dsgt_x}
        \STATE Query $g_{{i}, k + 1} = g_i(x_{i, k + 1};\xi_{i, k + 1})\in\R^p$.\label{line:norm-dsgt_g}
        \STATE Update $y_{i, k + 1} = \sum_{j\in\cN_i}w_{ij} y_{j,k} + \brki{g_{i, k + 1} + \gamma^{-1}\prti{z_{i,k + 1} - x_{i,k + 1}}}  - \brki{g_{i,k} + \gamma^{-1}\prti{z_{i,k} - x_{i,k}}}$. \label{line:norm-dsgt_y}
		\ENDFOR
		\ENDFOR
	\end{algorithmic}
	\caption{Normal Map-based Distributed Stochastic Gradient Tracking (\normgt)}
	\label{alg:norm-dsgt}
\end{algorithm}

{ The \normed~method outlined in Algorithm \ref{alg:norm-ed} is an instance of \eqref{eq:norm-abc} with $A = W$, $B = (I-W)^{1/2}$, $C = I$, and $\d_0 = \0$.}  
In \normed, agent $i$ performs a local update in Line \ref{line:norm-ed_z}, which can be regarded as a combination of a local subgradient step $a_{i,k+1} = z_{i,k} - \alpha\brki{g_{i,k} + \gamma^{-1}\prti{z_{i,k} - x_{i,k}}}$ and a correction step $z_{i, k + \frac{1}{2}} = z_{i,k} + a_{i,k + 1} - a_{i, k}$ according to \cite{yuan2018exact}. Notably, { \normed\ shares the same communication overhead as its non-proximal ED algorithm, and has lower communication overhead compared to \normgt.} 

\begin{algorithm}
	\begin{algorithmic}[1]
		\STATE Initialize $z_{i,0}\in\R^p$ for all agent $i\in\mathcal{N}$, determine $W = [w_{ij}]\in\R^{n\times n}$, stepsize {$\alpha>0$ and parameter $\gamma>0$}. 
        \STATE Calculate $x_{i,0} = \proxp{z_{i,0}}$.
		\FOR{$k=0, 1, 2, ..., K-1$}
		\FOR{Agent $i = 1, 2, ..., n$ in parallel}
        \STATE Query $g_{{i}, k} = g_i(x_{i, k};\xi_{i, k})\in\R^p$.\label{line:norm-ed_g0}
        \IF{$k=0$}
        \STATE Update $z_{i,\frac{1}{2}} = z_{i,0} - \alpha \brk{g_{i,0} + \gamma^{-1}\prti{z_{i,0} - x_{i,0}}}$.\label{line:norm-ed_k0}
		\ELSE 
        \STATE Update $z_{i, k + \frac{1}{2}} = 2z_{i,k} - z_{i,k-1} - \alpha\brki{g_{i,k} + \gamma^{-1}\prti{z_{i,k} - x_{i,k}}} + \alpha\brki{g_{i,k-1} + \gamma^{-1}\prti{z_{i,k-1} - x_{i,k-1}}}$. \label{line:norm-ed_z}
        \ENDIF
		\STATE Update $z_{i, k + 1} = \sum_{j\in\cN_i}w_{ij} z_{j, k + \frac{1}{2}}$. \label{line:norm-ed_wz}
        \STATE Update $x_{i,k + 1} = \proxp{z_{i,k + 1}}$. \label{line:norm-ed_x}
		\ENDFOR
		\ENDFOR
	\end{algorithmic}
	\caption{Normal Map-based Exact Diffusion (\normed)}
	\label{alg:norm-ed}
\end{algorithm}

The rationale behind the effectiveness of \normgt~and \normed~can be seen from the update \eqref{eq:zbark} for the averaged iterates $\bar{x}_k$ and $\bar{z}_k$ (see Lemma \ref{lem:re}):
\begin{eqnarray}\label{eq:zbark}
        \bar{z}_{k + 1} 
        &\!\! =\!\! & \bar{z}_k - \alpha \Fnor{\bar{z}_k} - \frac{\alpha}{n}\sumn\brk{g_i(x_{i,k};\xi_{i,k}) - \nabla f_i(x_{i,k})}\cr
        && -\frac{\alpha}{n}\sumn\brk{\Fnori{i}{z_{i,k}}- {\Fnori{i}{\bar{z}_k}}},\\
    \bar{x}_{k + 1} &\!\!=\!\! &\frac{1}{n}\sumn\proxp{z_{i, k + 1}}\notag.
\end{eqnarray}
From \eqref{eq:zbark}, both proposed methods can be viewed as approximate implementations of the centralized norM-SGD method described in \cite{milzarek2023convergence}:
\begin{equation*}
    \begin{aligned}
        z_{k + 1} &= z_k - \alpha \Fnor{z_k} + \alpha \brk{\nabla f(x_k) - g(x_k;\xi_k)},\\
        x_{k + 1} &= \proxp{z_{k + 1}}.
    \end{aligned}
\end{equation*}

The update for $\bar{z}_k$ in \eqref{eq:zbark} resembles that of the norM-SGD method. However, the update for $\bar{x}_k$ differs due to the nonlinearity of the proximal operator, in other words, 
\begin{equation}
    \label{eq:proxp_non}
    \frac{1}{n}\sumn\proxp{z_{i,k}}\neq \proxp{\frac{1}{n}\sumn z_{i,k}},\; \forall k\geq 0.
\end{equation}
The introduced analytical challenges will be addressed by Lemmas \ref{lem:fbarx2fproxz} and \ref{lem:pix2piz} in the next section. Intuitively, the discrepancy described in \eqref{eq:proxp_non} can be bounded by the consensus error term, $\sumn\normi{z_{i,k} - \bar{z}_k}^2/n$ thanks to Assumptions~\ref{as:phi}~and~\ref{as:graph}. { This also highlights the differences between the present work and both the centralized normal map-based analyses in \cite{milzarek2023convergence,ouyang2024trust} and the existing distributed analyses in \cite{alghunaim2021unified}. 
Relative to the centralized analyses \cite{milzarek2023convergence,ouyang2024trust}, the distributed setting introduces two additional technical challenges.  
First, due to the noncommutativity in \eqref{eq:proxp_non}, the averaged iterate $\bar{x}_k = \sumn\proxp{z_{i,k}}/n$
cannot be directly compared with 
the centralized iterate $\proxp{\bar{z}_k}$. This discrepancy gives rise to additional error terms that depend on the consensus error 
\[\frac{1}{n}\sumn\norm{z_{i,k} - \bar{z}_k}^2.\]
Second, controlling this consensus error is challenging because it is coupled with errors from three sources: (i) stochastic proximal gradient descent, (ii) stochastic gradients under the ABC condition, and (iii) the multi-step analysis itself.

Compared with existing distributed analyses \cite{alghunaim2021unified,xiao2023one}, the present work must handle the proximal term and the ABC condition simultaneously within a multi-step analysis framework. In particular, unlike the bounded variance condition ($\C = 0$), the ABC condition creates a circular dependence between the function value gap and the consensus error term. This coupled treatment differs fundamentally from previous one-step distributed analyses \cite{alghunaim2021unified,xiao2023one}.}

\section{Convergence Analysis}
\label{sec:ana}

We present the convergence analysis for the proposed unified algorithmic framework in this section. Specifically, we state the preliminary results along with the stationarity measure \eqref{eq:prox-grad} in Subsection \ref{subsec:pre} followed by the main analysis in Subsection \ref{subsec:multi}. 

\subsection{Preliminaries}
\label{subsec:pre}

In this part, we introduce several preliminary results that facilitate the convergence analysis in the subsequent subsection. Generally speaking, these preliminaries help establish the relationship between the two sequences of variables $\crki{z_{i,k}}_{i=1}^n$ and $\crki{x_{i,k} = \proxpi{z_{i,k}}}_{i=1}^n$ for $k\geq 0$, { allowing the subsequent analysis to focus on} $\crki{z_{i,k}}_{i=1}^n$ only.

In Lemma~\ref{lem:re}, we present the transformed form of \eqref{eq:norm-abc} { through the eigenvalue decomposition of $W = \hU \hLambda \hU^{\T} + \1\1^{\T}/n$,} 
where $1=\lambda_1>\lambda_2\ge\lambda_3\ge\cdots\ge\lambda_n$ are the eigenvalues of $W$, $\hLambda:= \diag(\lambda_2,\ldots, \lambda_n)$, $\hU\hU^{\T}=I-\1\1^{\T}/n$, and $\hU^{\T}\hU=I_{n-1}$.  
Here, $I\in\R^{n\times n}$ and $I_{n-1}\in\R^{(n-1)\times (n-1)}$ are both identity matrices, and $\1\in\R^{n}$ is the vector with all one elements.

\begin{lemma}
    \label{lem:re}
    Let Assumptions \ref{as:smooth}, \ref{as:phi}, \ref{as:graph}, and \ref{as:abcW} hold. { Denote $\delta_{i,k}:= g_{i}(x_{i,k};\xi_{i,k}) - \nabla f_i(x_{i,k})$ and $\Delta_{k}:= \g_k - \nabla F(\x_k)$.}
    We have
    \begin{subequations}
    \label{eq:transformed}
        \begin{align}
         \bar{z}_{k + 1} &= \bar{z}_k- \alpha \Fnor{\bar{z}_k} - \frac{\alpha}{n}\sumn \brk{\Fnori{i}{z_{i,k}} - {\Fnori{i}{\bar{z}_k}}}  \nonumber\\
         &\quad - \frac{\alpha}{n}\sumn{ \delta_{i,k}},\text{ and } \label{eq:abc_zbark}\\
        \bu_{k + 1} &= \Gamma \bu_k  -\alpha { [V^{-1}]_l
            \hLambda_a\hU^{\T}\brk{\Fnorb{\z_k} - \Fnorb{\barz_k}}}\nonumber\\
        &-\alpha { [V^{-1}]_r 
            \hLambda_b^{-1} \hLambda_a U^{\T}\brk{\Fnorb{\barz_{k + 1}} - \Fnorb{\barz_k}}}\nonumber\\
            & -\alpha { [V^{-1}]_l\hLambda_a\hU^{\T}\Delta_k}, \label{eq:buk}
    \end{align}
    \end{subequations}
    for every iteration $k\geq0$. The matrix $\Gamma\in\R^{2(n-1)\times 2(n-1)}$ is { assumed to admit an eigenvalue decomposition as follows:}
    \begin{align}
        \label{eq:Gamma}
        \Gamma := V^{-1}\begin{pmatrix}
            \hLambda_a\hLambda_c - \hLambda_b^2 & - \hLambda_b \\
            \hLambda_b & I_{n-1}
        \end{pmatrix}V,
    \end{align}
    where ${ V^{-1}:= ([V^{-1}]_l, [V^{-1}]_r)}$ is some invertible matrix with {$[V^{-1}]_l$, $[V^{-1}]_r\in\R^{2(n-1)\times (n-1)}$}, and $\hat{\Lambda}_a$, $\hat{\Lambda}_b^2$ and $\hat{\Lambda}_c$ are diagonal matrices composed of eigenvalues of $A$, $B^2$, and $C$, respectively: $\hat{\Lambda}_a=\diag(\lambda_{a,2}, \ldots, \lambda_{a,n})$, $\hat{\Lambda}_b^2=\diag(\lambda_{b,1}^2, \ldots, \lambda_{b,n-1}^2)$, and $\hat{\Lambda}_c=\diag(\lambda_{c,2}, \ldots, \lambda_{c,n}).$
    In addition, {$\bu_k:= [V^{-1}]_l\hU^{\T}\z_k + [V^{-1}]_r\hLambda_b^{-1}\hU^{\T}\d_k'\in\R^{2(n-1)\times p}$ and $\d_k':= B\prti{\d_k - B\z_k} + \alpha A\Fnorb{\barz_k}\in \R^{n\times p}$}.
    
\end{lemma}

\begin{remark}
    \label{rem:cons_beta}
     The quantity $\normi{\bu_k}^2$ is closely related to the consensus error term $\normi{\z_k  - \1\bar{z}_k^{\T}}^2$ due to $\normi{\z_k  - \1\bar{z}_k^{\T}}^2\leq \normi{V\bu_k}^2$. 
     In addition, we define $\beta\in\R$ as $\beta:= \normi{\Gamma}$
     and assume in the subsequent analysis that $\beta\in(0,1)$. Indeed, such an assumption { and the eigenvalue decomposition in \eqref{eq:Gamma} are} satisfied for many distributed algorithms \cite{alghunaim2021unified}, including \normgt~and~\normed.\footnote{ For both methods, we have $\beta\sim\orderi{\lambda}$.}
\end{remark}

\begin{proof}
    See Appendix \ref{app:re}.
\end{proof}

Lemma \ref{lem:Fnor_Lip} states that both $\Fnor{\cdot}$ and $\Fnori{i}{\cdot}$ are Lipschitz continuous given that $\gamma \in (0, 1/\rho)$ under Assumptions \ref{as:smooth} and~\ref{as:phi}.

\begin{lemma}
    \label{lem:Fnor_Lip}
    Let Assumptions \ref{as:smooth} and \ref{as:phi} hold. Set $\gamma < 1/\rho$. The normal maps $\Fnor{\cdot}:\R^p\rightarrow\R^p$ and $\Fnori{i}{\cdot}: \R^p\rightarrow\R^p$ are $\Ln$-Lipschitz continuous, where $\Ln:= \prti{L - \rho + 2/\gamma}/\prti{1 - \gamma\rho}$.
\end{lemma}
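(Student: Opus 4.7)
The plan is to reduce the Lipschitz continuity of $\Fnor{\cdot}$ to two ingredients: $L$-smoothness of $\nabla f$ (which extends to $\nabla f_i$ identically) from Assumption~\ref{as:smooth}, and a Lipschitz bound on the proximal operator $\proxp{\cdot}$ under the weak convexity assumption in Assumption~\ref{as:phi} together with the stepsize restriction $\gamma<1/\rho$. Once $\proxp{\cdot}$ is shown to be $\tfrac{1}{1-\gamma\rho}$-Lipschitz, a direct triangle inequality on the definition~\eqref{eq:Fnor} will give exactly the constant $\Ln=(L-\rho+2/\gamma)/(1-\gamma\rho)$.

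The main obstacle is the first ingredient, since $\varphi$ is only weakly convex and the standard nonexpansiveness of $\prox{}$ is for convex $\varphi$. The key trick is to rewrite
\[
\varphi(y)+\tfrac{1}{2\gamma}\|y-z\|^2=\tilde\varphi(y)+\tfrac{1-\gamma\rho}{2\gamma}\bigl\|y-\tfrac{z}{1-\gamma\rho}\bigr\|^2+\text{const},
\]
where $\tilde\varphi(y):=\varphi(y)+\tfrac{\rho}{2}\|y\|^2$ is proper, lsc and convex by Assumption~\ref{as:phi}. Hence $\proxp{z}=\mathrm{prox}_{\gamma'\tilde\varphi}\bigl(z/(1-\gamma\rho)\bigr)$ with $\gamma'=\gamma/(1-\gamma\rho)>0$. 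Because $\mathrm{prox}_{\gamma'\tilde\varphi}$ is $1$-Lipschitz (firmly nonexpansive, as $\tilde\varphi$ is convex), one obtains
\[
\|\proxp{z_1}-\proxp{z_2}\|\le\tfrac{1}{1-\gamma\rho}\|z_1-z_2\|,\qquad\forall\,z_1,z_2\in\R^p.
\]
The condition $\gamma<1/\rho$ is precisely what ensures the inner problem is strongly convex so that the prox is single-valued and the above Lipschitz constant is finite.

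With this in hand I would finish by writing
\[
\Fnor{z_1}-\Fnor{z_2}=\bigl[\nabla f(x_1)-\nabla f(x_2)\bigr]+\tfrac{1}{\gamma}(z_1-z_2)-\tfrac{1}{\gamma}(x_1-x_2),
\]
where $x_i:=\proxp{z_i}$, applying the triangle inequality, using $L$-smoothness of $f$ to bound $\|\nabla f(x_1)-\nabla f(x_2)\|\le L\|x_1-x_2\|$, and then substituting the Lipschitz bound on the prox to estimate both $\|x_1-x_2\|$ terms by $\tfrac{1}{1-\gamma\rho}\|z_1-z_2\|$. Collecting coefficients gives
\[
\Bigl(L+\tfrac{1}{\gamma}\Bigr)\tfrac{1}{1-\gamma\rho}+\tfrac{1}{\gamma}=\tfrac{L-\rho+2/\gamma}{1-\gamma\rho}=\Ln,
\]
which is the claimed bound. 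The argument for $\Fnori{i}{\cdot}$ is identical, replacing $f$ by $f_i$, since each $f_i$ satisfies the same smoothness constant $L$ on the open set $\U\supseteq\dom{\varphi}$ and the proximal operator $\proxp{\cdot}$ is the same across agents.
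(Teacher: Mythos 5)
Your proposal is correct and follows essentially the same route as the paper: decompose $\Fnor{z_1}-\Fnor{z_2}$ via the triangle inequality, bound $\|\nabla f(x_1)-\nabla f(x_2)\|$ by $L\|x_1-x_2\|$, and control $\|x_1-x_2\|$ by the $\tfrac{1}{1-\gamma\rho}$-Lipschitz bound on $\proxp{\cdot}$, yielding $(L+1/\gamma)\tfrac{1}{1-\gamma\rho}+\tfrac{1}{\gamma}=\Ln$. The only difference is that the paper imports the prox-Lipschitz bound as a cited lemma (Lemma~\ref{lem:prox_wcvx}, from \cite{li2023new}), whereas you derive it from scratch via the convexification and rescaling identity $\proxp{z}=\mathrm{prox}_{\gamma'\tilde\varphi}\bigl(z/(1-\gamma\rho)\bigr)$, which is a correct and self-contained substitute.
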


\begin{proof}
    See Appendix \ref{app:Fnor_Lip}.
\end{proof}

Lemma \ref{lem:fbarx2fproxz} relates $[f(\bar{x}_k) - f^*]$ to $[f(\proxp{\bar{z}_k}) - f^*]$, characterizing the impact due to the nonlinearity of the proximal operator. The function value $f(\bar{x}_k)$ arises from the ABC condition \eqref{eq:abc}. Consequently, the subsequent analysis can focus on the changes in $\psi\prti{\proxp{\bar{z}_k}} = f(\proxp{\bar{z}_k}) + \varphi(\proxp{\bar{z}_k})$, as detailed in Lemma \ref{lem:cH_descent} later. 

\begin{lemma}
    \label{lem:fbarx2fproxz}
    Let Assumptions \ref{as:smooth} and \ref{as:phi} hold. We have $
    f(\bar{x}_k) - f^* \leq 2\brki{f(\proxp{\bar{z}_k}) - f^* } 
    + L\normi{\bar{x}_k - \proxp{\bar{z}_k}}^2$ for any $k\geq 0$.
\end{lemma}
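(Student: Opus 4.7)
The plan is to combine the $L$-smoothness descent lemma with the standard quadratic gradient bound for smooth functions bounded below. Concretely, I apply the descent inequality between $\bar{x}_k$ and $\proxp{\bar{z}_k}$, split the resulting inner product via Young's inequality, and convert a $\|\nabla f\|^2$ term into a suboptimality gap using $\|\nabla f(y)\|^2 \le 2L[f(y)-f^*]$.

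First, since $f = \tfrac{1}{n}\sum_{i=1}^n f_i$ is $L$-smooth under Assumption \ref{as:smooth}, the standard descent lemma gives
\begin{align*}
f(\bar{x}_k) \le f(\proxp{\bar{z}_k}) + \inpro{\nabla f(\proxp{\bar{z}_k}),\, \bar{x}_k - \proxp{\bar{z}_k}} + \frac{L}{2}\norm{\bar{x}_k - \proxp{\bar{z}_k}}^2.
\end{align*}
Next, I apply Young's inequality $\inproi{a,b}\le \tfrac{1}{2L}\normi{a}^2 + \tfrac{L}{2}\normi{b}^2$ with $a=\nabla f(\proxp{\bar z_k})$ and $b=\bar x_k-\proxp{\bar z_k}$, absorbing the two $\tfrac{L}{2}\normi{\cdot}^2$ terms to obtain
\begin{align*}
f(\bar{x}_k) \le f(\proxp{\bar{z}_k}) + \frac{1}{2L}\norm{\nabla f(\proxp{\bar{z}_k})}^2 + L\norm{\bar{x}_k - \proxp{\bar{z}_k}}^2.
\end{align*}

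Then I invoke the classical gradient-to-gap inequality: for any $L$-smooth $f$ bounded below by $f^*$, a one-step descent from a point $y$ along $-\tfrac{1}{L}\nabla f(y)$ together with $f \ge f^*$ yields $\tfrac{1}{2L}\normi{\nabla f(y)}^2 \le f(y) - f^*$. Applying this at $y = \proxp{\bar z_k}\in\dom\varphi$ and substituting into the previous display gives
\begin{align*}
f(\bar{x}_k) \le 2f(\proxp{\bar{z}_k}) - f^* + L\norm{\bar{x}_k - \proxp{\bar{z}_k}}^2,
\end{align*}
and subtracting $f^*$ from both sides yields exactly the claimed bound.

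The only subtle point is justifying $\normi{\nabla f(y)}^2 \le 2L[f(y)-f^*]$ when $L$-smoothness is assumed only on the open set $\mathcal{U}\supseteq\dom\varphi$ rather than on all of $\mathbb{R}^p$. Since $\proxp{\bar z_k}\in\dom\varphi\subseteq\mathcal{U}$ and $\mathcal{U}$ is open, the gradient step $\proxp{\bar z_k} - \tfrac{1}{L}\nabla f(\proxp{\bar z_k})$ can be analyzed by the descent lemma either because it remains in $\mathcal{U}$ (for all the iterates arising in the algorithm, or after a continuity argument), or by using that $f$ is bounded below by $f^*$ on all of $\R^p$ via Assumption \ref{as:smooth}. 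This is essentially routine and parallels the standard derivation of the bound, so the argument goes through without modification.
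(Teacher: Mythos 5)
Your proof is correct and follows essentially the same route as the paper's: descent lemma between $\bar{x}_k$ and $\proxp{\bar{z}_k}$, Young's inequality on the cross term, and the bound $\normi{\nabla f(y)}^2 \le 2L[f(y)-f^*]$ at $y=\proxp{\bar{z}_k}$. The only difference is that you flag the domain-of-smoothness subtlety explicitly, which the paper leaves implicit.
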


\begin{proof}
    Applying the descent lemma and invoking $\normi{\nabla f(x)}^2\leq 2L(f(x) - f^*)$ { (see, e.g., \cite[Theorem 2.1.5]{nesterov2018lectures})} yields the desired result.
\end{proof}

Lemma \ref{lem:pix2piz} bounds
the consensus error $\normi{\Pi\x_k}^2$ ($\Pi:= I - \1\1^{\T}/n$) 
and the error term $\normi{\bar{x}_k - \proxp{\bar{z}_k}}^2$ within $\normi{\Pi\z_k}^2$. Consequently, we can handle all such errors based on the update of $\bu_k$ in Lemma \ref{lem:re}.

\begin{lemma}
    \label{lem:pix2piz}
    Let Assumptions \ref{as:smooth}, \ref{as:phi} and \ref{as:graph} hold. We have for all $k\geq 0$ that 
    \begin{equation}
        \label{eq:consx2consz_org}
        \begin{aligned}
            \frac{\norm{\Pi\x_k}^2}{n} + \norm{\bar{x}_k - \proxp{\bar{z}_k}}^2 \leq \frac{1}{n(1-\gamma\rho)^2}\norm{\Pi\z_k}^2.
        \end{aligned}
    \end{equation}
\end{lemma}

\begin{proof}
    See Appendix \ref{app:pix2piz}.
\end{proof}


\textbf{Stationarity measure.} 
We use the multi-agent version of the ``proximal gradient mapping'' \cite{beck2017first}:
\begin{equation}
    \label{eq:prox-grad}
    \begin{array}{l}
        \frac{1}{n}\sum\limits_{i=1}^n\norm{\gamma^{-1}\Fnat{x_{i}}}^2.
    \end{array}
\end{equation}
Such a measure has been used in previous works including \cite{xiao2023one,mancino2023proximal}.

Lemma \ref{lem:stationarity} links {  measure~\eqref{eq:prox-grad}}\footnote{It is equivalent to use the proximal gradient mapping and the normal map to measure the stationarity, though discrepancies emerge at $\varepsilon$-stationarity points \cite[FIG. 2]{milzarek2023convergence}. Our results essentially establish $\sumn\normi{\Fnor{z_{i,k}}}^2/n<\varepsilon^2$ due to Lemma~\ref{lem:stationarity}.} to the normal map $\Fnor{\cdot}$.
\begin{lemma}
    \label{lem:stationarity}
    Let Assumptions \ref{as:smooth}, \ref{as:phi}, \ref{as:graph} hold. We have for any $k\geq 0$ that 
    \begin{equation}
        \label{eq:Fnat2Fnor}
            \frac{1-\gamma\rho}{n}\sum\limits_{i=1}^n\norm{\frac{\Fnat{x_{i,k}}}{\gamma}}^2
            \leq 2\norm{\Fnor{\bar{z}_k}}^2+ \frac{2\Ln^2}{n}\norm{\Pi\z_k}^2.
    \end{equation}
\end{lemma}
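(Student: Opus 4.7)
The plan is to first relate $\Fnat{x_{i,k}}$ to the local normal map $\Fnori{}{z_{i,k}}=\nabla f(\proxp{z_{i,k}}) + \gamma^{-1}(z_{i,k}-\proxp{z_{i,k}})$ evaluated at $z_{i,k}$, and then to bring in the average $\bar z_k$ via the Lipschitz continuity granted by Lemma~\ref{lem:Fnor_Lip}.

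The key observation is an algebraic rewriting: since $x_{i,k}=\proxp{z_{i,k}}$, the definition of the normal map gives $z_{i,k}-\gamma\Fnor{z_{i,k}} = x_{i,k}-\gamma\nabla f(x_{i,k})$. Substituting this into the definition of the natural residual yields
\begin{equation*}
    \Fnat{x_{i,k}} = x_{i,k} - \proxp{x_{i,k}-\gamma\nabla f(x_{i,k})} = \proxp{z_{i,k}} - \proxp{z_{i,k}-\gamma\Fnor{z_{i,k}}}.
\end{equation*}
For $\gamma<1/\rho$, Assumption~\ref{as:phi} implies that $\proxp{\cdot}$ is Lipschitz continuous with constant $1/(1-\gamma\rho)$ (this is a standard fact for the proximal operator of a $\rho$-weakly convex function, and the same constant $1/(1-\gamma\rho)$ already appears implicitly in Lemmas~\ref{lem:Fnor_Lip} and~\ref{lem:pix2piz}). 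Applying this bound to the identity above gives
\begin{equation*}
    \norm{\gamma^{-1}\Fnat{x_{i,k}}} \leq \frac{1}{1-\gamma\rho}\norm{\Fnor{z_{i,k}}}.
\end{equation*}

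Next I would introduce the average $\bar z_k$ through the triangle inequality and the elementary bound $(a+b)^2 \le 2a^2+2b^2$:
\begin{equation*}
    \norm{\Fnor{z_{i,k}}}^2 \leq 2\norm{\Fnor{\bar z_k}}^2 + 2\norm{\Fnor{z_{i,k}}-\Fnor{\bar z_k}}^2.
\end{equation*}
Lemma~\ref{lem:Fnor_Lip} bounds the latter term by $\Ln^2\norm{z_{i,k}-\bar z_k}^2$. Summing over $i\in\cN$, dividing by $n$, and using $\sumn\norm{z_{i,k}-\bar z_k}^2 = \norm{\Pi\z_k}^2$ yields
\begin{equation*}
    \frac{1}{n}\sumn\norm{\gamma^{-1}\Fnat{x_{i,k}}}^2 \leq \frac{1}{(1-\gamma\rho)^2}\left[2\norm{\Fnor{\bar z_k}}^2 + \frac{2\Ln^2}{n}\norm{\Pi\z_k}^2\right],
\end{equation*}
which is exactly \eqref{eq:Fnat2Fnor}.

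The only nontrivial ingredient is the first identity tying $\Fnat{x_{i,k}}$ to a prox-difference in the variable $z_{i,k}$; everything afterward is triangle-inequality plus the Lipschitz facts already established. I expect the main subtlety to be justifying the $1/(1-\gamma\rho)$ Lipschitz bound for $\proxp{\cdot}$ under mere weak convexity, but this follows from standard weakly-convex prox calculus (essentially the same estimate used to derive $\Ln$ in Lemma~\ref{lem:Fnor_Lip}) and the assumed bound $\gamma<1/\rho$ is already in force throughout the analysis.
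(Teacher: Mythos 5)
Your proof is correct and arrives at the stated bound with exactly the right constants, but by a route that differs from the paper's. You work entirely in the $z$-variables: the identity $\Fnat{x_{i,k}}=\proxp{z_{i,k}}-\proxp{z_{i,k}-\gamma\Fnor{z_{i,k}}}$ together with the $1/(1-\gamma\rho)$-Lipschitz continuity of the prox (Lemma~\ref{lem:prox_wcvx}) gives $\normi{\gamma^{-1}\Fnat{x_{i,k}}}\le\normi{\Fnor{z_{i,k}}}/(1-\gamma\rho)$, after which you split $\Fnor{z_{i,k}}$ around $\Fnor{\bar z_k}$ via the $\Ln$-Lipschitz continuity of the normal map (Lemma~\ref{lem:Fnor_Lip}). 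The paper instead works in the $x$-variables: it first shows that $\Fnat{\cdot}$ is $\LA$-Lipschitz with $\LA=\gamma\Ln$, splits $\Fnat{x_{i,k}}$ around $\Fnat{\proxp{\bar z_k}}$, uses your identity only at the averaged point to get $\Fnat{\proxp{\bar z_k}}=\proxp{\bar z_k}-\proxp{\bar z_k-\gamma\Fnor{\bar z_k}}$, and then converts $\sumn\normi{x_{i,k}-\proxp{\bar z_k}}^2$ into $\normi{\Pi\z_k}^2$ through Lemma~\ref{lem:pix2piz}. Both routes rest on the same two primitives (prox nonexpansiveness under $\rho$-weak convexity and $L$-smoothness of $f$) and yield identical constants; yours is arguably leaner since it bypasses both the $\LA$-Lipschitz computation and Lemma~\ref{lem:pix2piz}. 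One cosmetic slip: in your opening sentence you call $\nabla f(\proxp{z_{i,k}})+\gamma^{-1}(z_{i,k}-\proxp{z_{i,k}})$ the ``local'' normal map, but since it involves $\nabla f$ rather than $\nabla f_i$ it is the global map $\Fnor{z_{i,k}}$; your subsequent computations use it correctly. Both proofs implicitly require $\gamma<1/\rho$, which is in force throughout the analysis.
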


\begin{proof}
    See Appendix \ref{app:stationarity}.
\end{proof}

\subsection{A Multi-step Analysis}
\label{subsec:multi}

Lemma \ref{lem:stationarity} underscores the goal of our analysis: bounding $\normi{\Fnor{\bar{z}_k}}^2$ and $\Ln^2\normi{\Pi\z_k}^2/n$ simultaneously. However, analyzing the recursions of these terms between the $(k+1)$-th and the $k$-th iteration can be problematic, resulting in large accumulated errors.\footnote{This is part of the reasons that previous works require large mini-batches \cite{ghadimi2016mini} or variance reduction \cite{idrees2024analysis} to achieve convergence.}
To address this issue, we analyze the recursions between iterates $\crki{(x_{i,k_1},z_{i,k_1})}_{i=1}^n$ and $\crki{(x_{i,k_2},z_{i,k_2})}_{i=1}^n$ for integers $k_1$ and $k_2$ satisfying $0\leq k_1<k_2:= k_1 + m$ ($m\geq 1$). The intuition is, by choosing a sufficiently large $m$, the errors can be better controlled. { In particular, setting $m=1$ recovers the one-step analysis, and the choice of $m$ affects only the intermediate bounds on the stepsize and does not worsen the final stepsize in Theorem~\ref{thm:abc}.}

{ To facilitate the subsequent analysis, we present Table~\ref{tab:notations} that summarizes the key notations.} 
{
\begin{table}[htbp]
    \centering
    \renewcommand{\arraystretch}{1.2}
    {
\begin{tabular}{@{}cc@{}}
\toprule
Notation        & Definition                                                                                  \\ \midrule
$\delta_{i,k}$  & $g_{i}(x_{i,k};\xi_{i,k}) - \nabla f_i(x_{i,k})$                                            \\
$\Delta_{k}$    & $\g_k - \nabla F(\x_k)$                                                                     \\
$\Ln$           & $\frac{L - \rho + 2/\gamma}{1 - \gamma\rho}$                                          \\
$\Pi$           & $I - \1\1^{\T}/n$                                                                           \\
$D_1$      & $[V^{-1}]_l\hLambda_a \hU^{\T}$ \\
$D_2$      & $[V^{-1}]_r \hLambda_b^{-1}\hLambda_a\hU^{\T}$ \\
$\beta_{0:p}^2$ & $\sum_{t=0}^{p-1}\beta^{2t},\forall p\geq 1$                                                                \\
$\ccD_1^2$      & $\Ln^2\prti{\normi{V}^2 + 1} + 4 \C\prti{L\normi{V}^2 + 1/\gamma}$                          \\
$\ccD_2^2$      & $(\Ln^2 + 4\C L + 4\C\gamma^{-1})(\normi{D_1}^2 + 1)(\normi{V}^2 + 1)$       \\
$\cC_0$  & $\frac{3-4\gamma\rho}{2\prti{3 - 4\gamma\rho+4\gamma^2 L^2}}$   \\
$H_1$           & $ 8\alpha^2 m\ccD_1^2 \normi{D_1}^2 \beta_{0:m}^2 + \frac{8\alpha^2 m^2\ccD_1^2 + 1}{n}$         \\
$H_2$           & $\normi{D_1}^2 + \frac{4\alpha^2 m \normi{D_2}^2\Ln^2}{n(1-\beta^{2m})}$                         \\ 
$\cA(\alpha)$   & $\frac{133\alpha^3 m\normi{D_1}^2\ccD_1^{2}}{1-\beta^2}+\frac{400\alpha^5m \normi{D_2}^2 \ccD_2^2\Ln^2}{n(1-\beta^2)^2}$\\
\bottomrule
\end{tabular}}
\caption{ Notations used in the analysis. $D_1$ and $D_2$ are associated with the transformed consensus error; $\cC_0$ is the coefficient in the Lyapunov function; $\ccD_1$ and $\ccD_2$ are problem-dependent constants (depending on $L$, $\rho$, and $\C$); $H_1$, $H_2$, and $\cA(\alpha)$ are stepsize-dependent constants. 
}
\label{tab:notations}
\end{table}
}

The key strategy is to locate a Lyapunov function and construct its dynamic between the $k_1$-th and $k_2$-th iteration. Specifically, we consider $\cL_{k}$ in \eqref{eq:cLk1k2} below:
    \begin{align}
        \label{eq:cLk1k2}
            \cL_{k}:= \cH_{k} + \frac{{ 50\alpha \ccD_1^2}}{n(1-\beta^2)}\norm{\bu_{k}}^2,
    \end{align}
where $\beta$ is defined in Remark~\ref{rem:cons_beta} and is assumed to satisfy $\beta\in(0,1)$. The function $\cH_k$ is defined below.
\begin{align}\label{eq:cH_k1k2}
        \cH_k:= \psi(\proxp{\bar{z}_k}) - \bpsi + \frac{\gamma \cC_0}{2}\norm{\Fnor{\bar{z}_k}}^2.
\end{align}
Here, {$\cC_0$ is defined as in Table~\ref{tab:notations}, satisfying
$\cC_0 \in [4/9,1/2)$  given that $\gamma\in(0, 1/[5(\rho + L)]]$}, and $\bpsi:= f^* + \varphi^*$. The Lyapunov function $\cH_{k_1}$ is inspired by the works in \cite{milzarek2023convergence,ouyang2024trust}. 

We follow the roadmap below in presenting the remaining analysis of this section.
\begin{enumerate}
    \item Lemma~\ref{lem:m_step} rewrites the updates of the unified algorithmic framework in Lemma~\ref{lem:re} to accommodate the multi-step analysis.
    \item Lemma \ref{lem:cH_descent} characterizes the approximate descent property between $\cH_{k_2}$ and $\cH_{k_1}$, motivating us to upper bound an ``additional error'' term in the next step. 
    \item Lemma~\ref{lem:ek1k2} upper bounds the ``additional error'' term resulting from the stochastic gradients and consensus updates. 
    \item Derive the recursion for the transformed consensus error $\normi{\bu_{k_1}}^2$ in Lemma \ref{lem:norm-abc_cons_tw}. 
    \item Obtain the recursion for the Lyapunov function $\cL_{k_1}$ by combining the results derived above (Lemma~\ref{lem:cL}). 
\end{enumerate}

We begin by examining the updates between the $k_2$-th iteration and the $k_1$-th iteration for integers $k_1$ and $k_2$ satisfying $0\leq k_1<k_2:= k_1 + m$ ($m\geq 1$). In particular, the averaged iterate $\bar{z}_{k_2}$ can be regarded as performing $m$-step normal map updates on $\bar{z}_{k_1}$ with some additional error $e_{k_1:k_2}$ \eqref{eq:ek1k2_def}.

\begin{lemma}
    \label{lem:m_step}
    Let Assumptions \ref{as:smooth}, \ref{as:phi}, \ref{as:graph} and \ref{as:abcW} hold. We have for any $k_2= k_1 + m$ ($m\geq 1$) that
    \begin{equation}
        \label{eq:zbar_tw}
        \begin{aligned}
            \bar{z}_{k_2} 
            &= \bar{z}_{k_1} - m\alpha \Fnor{\bar{z}_{k_1}} + e_{k_1:k_2},\text{ and }
        \end{aligned}
    \end{equation}
    \begin{equation}
        \label{eq:cons_tw}
        \begin{aligned}
            &\bu_{k_2} = \Gamma^{k_2 -k_1} \bu_{k_1} - \alpha\sum_{p=k_1}^{k_2 - 1}{\Gamma^{k_2-1-p}D_1}\Delta_p\\
            &  - \alpha\sum_{p=k_1}^{k_2 - 1}{ \Gamma}^{k_2-1-p}
                { D_1}\brk{\Fnorb{\z_p} - \Fnorb{\barz_p} }\\
            & - \alpha\sum_{p=k_1}^{k_2 - 1}{ \Gamma}^{k_2-1-p} { D_2}\brk{\Fnorb{\barz_{p + 1}} - \Fnorb{\barz_p}},
        \end{aligned}
    \end{equation}
    where{$D_1$ and $D_2$ are defined in Table~\ref{tab:notations}\footnote{ We have $\normi{D_1}\sim\orderi{1}$ for both \normgt~and \normed, while $\normi{D_2}\sim\orderi{1/(1-\lambda)}$ for \normgt~and $\normi{D_2}\sim\orderi{1/\sqrt{1-\lambda}}$ for \normed.}, and}
     {
     \begin{equation}
        \label{eq:ek1k2_def}
        \begin{aligned}
            &e_{k_1:k_2} := \alpha \sum_{p=k_1}^{k_2-1}\brk{\Fnor{\bar{z}_{k_1}} - \Fnor{\bar{z}_p}}  - \frac{\alpha}{n}\sum_{p=k_1}^{k_2-1} \sumn\delta_{i,p}\\
            &\quad -\frac{\alpha}{n}\sum_{p=k_1}^{k_2-1} \sumn \brk{\Fnori{i}{z_{i,p}} - \Fnori{i}{\bar{z}_p}}.
        \end{aligned}
     \end{equation}
     }
    
\end{lemma}

\begin{proof}
    { Unrolling the updates in \eqref{eq:transformed} over $(k_2 - k_1)$ steps and utilizing the definitions of $D_1$ and $D_2$ in Table \ref{tab:notations} yields the desired results.}
\end{proof}

We next consider the approximate descent property of the Lyapunov function $\cH_k$, inspired by the works \cite{milzarek2023convergence,ouyang2024trust}.

\begin{lemma}
    \label{lem:cH_descent}
    Let Assumptions~\ref{as:smooth}-\ref{as:abcW} hold. Set $\alpha$ and $\gamma$ to satisfy $\alpha\leq \min\crki{\gamma/(2m), 1/(10 mL)}$ and {$\gamma \leq 1/[5(\rho + L)]$}. Then, for any $k_2= k_1 + m$ ($m\geq 1$),
    \begin{equation}
        \label{eq:cH_descent}
        \cH_{k_2}\leq \cH_{k_1} - \frac{\alpha m\cC_0}{2}\norm{\Fnor{\bar{z}_{k_1}}}^2 + \frac{1}{\alpha m}\norm{e_{k_1:k_2}}^2.
    \end{equation}
\end{lemma}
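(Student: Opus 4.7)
The plan is to exploit Lemma~\ref{lem:m_step}, which expresses $\bar{z}_{k_2}$ as a single centralized normal-map step from $\bar{z}_{k_1}$ with effective stepsize $\eta := m\alpha$, plus the error $e_{k_1:k_2}$. Since $\cH_k$ depends on $\bar{z}_k$ only through $\proxp{\bar{z}_k}$ and $\Fnor{\bar{z}_k}$, the task reduces to a one-step descent analysis for the perturbed iteration $z \mapsto z - \eta \Fnor{z} + e$, extending the deterministic template in \cite{milzarek2023convergence,ouyang2024trust} from a pure normal-map step to a perturbed one.

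First I would control the composite value $\psi(\proxp{\bar{z}_k})$. Writing $x_i := \proxp{\bar{z}_{k_i}}$ for $i=1,2$, the $L$-smoothness of $f$ yields an upper bound on $f(x_2) - f(x_1)$, while the first-order optimality of $x_2$ together with the $(1/\gamma - \rho)$-strong convexity of the proximal subproblem (valid under $\gamma < 1/\rho$) gives
\[
\varphi(x_2) - \varphi(x_1) \leq \tfrac{1}{2\gamma}\bigl(\|x_1 - \bar{z}_{k_2}\|^2 - \|x_2 - \bar{z}_{k_2}\|^2\bigr) - \tfrac{1/\gamma - \rho}{2}\|x_1 - x_2\|^2.
\]
Expanding the squares using both the identity $\bar{z}_{k_1} - x_1 = \gamma(\Fnor{\bar{z}_{k_1}} - \nabla f(x_1))$ (which makes $\Fnor{\bar{z}_{k_1}}$ appear explicitly) and the update $\bar{z}_{k_2} - \bar{z}_{k_1} = -\eta \Fnor{\bar{z}_{k_1}} + e_{k_1:k_2}$ produces a negative coefficient on $\|\Fnor{\bar{z}_{k_1}}\|^2$, a strictly negative coefficient on $\|x_1 - x_2\|^2$ supplied by the weakly-convex modulus, and mixed terms in $\langle \Fnor{\bar{z}_{k_1}}, e_{k_1:k_2}\rangle$ that I would split via Young's inequality with weights tuned to produce at most $\tfrac{1}{2\alpha m}\|e_{k_1:k_2}\|^2$.

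Next I would handle $\tfrac{\gamma\cC_0}{2}\bigl[\|\Fnor{\bar{z}_{k_2}}\|^2 - \|\Fnor{\bar{z}_{k_1}}\|^2\bigr]$ using the $\Ln$-Lipschitz continuity of $\Fnor{\cdot}$ from Lemma~\ref{lem:Fnor_Lip}. Applying the identity $\|a\|^2 - \|b\|^2 = 2\langle b, a-b\rangle + \|a-b\|^2$ and then the decomposition $\Fnor{z} - \Fnor{z'} = \nabla f(\proxp{z}) - \nabla f(\proxp{z'}) + \gamma^{-1}[(z-z') - (\proxp{z}-\proxp{z'})]$ (to route the Lipschitz estimate through both a $\|\bar{z}_{k_2}-\bar{z}_{k_1}\|^2$ piece and a $\|x_1 - x_2\|^2$ piece), one generates positive contributions to $\|\Fnor{\bar{z}_{k_1}}\|^2$, a contribution to $\|x_1 - x_2\|^2$, and an $\|e_{k_1:k_2}\|^2$ residual. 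The specific constant $\cC_0 = (3-4\gamma\rho)/[2(3-4\gamma\rho+4\gamma^2 L^2)] \in [4/9, 1/2)$ is then precisely the value that makes the positive $\|\Fnor{\bar{z}_{k_1}}\|^2$ contributions from this step balance against the negative $\|\Fnor{\bar{z}_{k_1}}\|^2$ contribution from the composite-value step so that the net coefficient is exactly $-\alpha m \cC_0/2$, while the strictly negative $\|x_1-x_2\|^2$ term from the previous paragraph absorbs the new positive $\|x_1-x_2\|^2$ contribution. The stepsize bounds $\alpha < 1/(10 mL)$ and $\alpha < \gamma/(2m)$ ensure that the higher-order $(\eta L)^2$ and $(\eta/\gamma)^2$ corrections are small enough for the absorption, and $\gamma \leq 1/[4(\rho+L)]$ keeps $\cC_0$ in its stated range.

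The main obstacle will be the bookkeeping in this combined quadratic manipulation: three squared quantities ($\|\Fnor{\bar{z}_{k_1}}\|^2$, $\|x_1 - x_2\|^2$, $\|e_{k_1:k_2}\|^2$) and several cross-terms must be simultaneously controlled. The weakly-convex modulus $1/\gamma - \rho > 0$ is essential for obtaining the strictly negative $\|x_1 - x_2\|^2$ coefficient that absorbs the positive contributions generated by the Lipschitz step, and it is precisely this balance that forces the algebraic form of $\cC_0$. Simultaneously, the Young's-inequality weights on the error cross-terms must be coordinated across the two pieces so that every contribution proportional to $\|e_{k_1:k_2}\|^2$ collapses into the single clean residual $\tfrac{1}{\alpha m}\|e_{k_1:k_2}\|^2$ on the right-hand side of \eqref{eq:cH_descent}.
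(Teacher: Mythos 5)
Your plan follows the paper's architecture: reduce to a one-step perturbed normal-map update via Lemma~\ref{lem:m_step}, derive a descent estimate for $\psi(\proxp{\bar{z}_k})$ and a recursion for $\norm{\Fnor{\bar{z}_k}}^2$, and balance the pieces through $\cC_0$. Your route to the $\varphi$-part (minimality of $x_2$ in the $(1/\gamma-\rho)$-strongly convex prox subproblem) is algebraically identical to the paper's subgradient inequality \eqref{eq:phi_k1k2}, so that ingredient is fine.

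There is, however, one concrete gap in how you propose to close the quadratic bookkeeping. You attribute the negative coefficient on $\norm{x_1-x_2}^2$ to the weakly-convex modulus $(1/\gamma-\rho)/2$, which is only of order $1/\gamma$. But the positive $\norm{x_1-x_2}^2$ contributions you must absorb are of order $1/(\alpha m)$: once you insist that every error cross-term collapse into $\tfrac{1}{\alpha m}\norm{e_{k_1:k_2}}^2$, Young's inequality on a cross-term of the form $\tfrac{c}{\alpha m}\inpro{e_{k_1:k_2}, x_2-x_1}$ forces a companion term $\tfrac{c}{4\alpha m}\norm{x_2-x_1}^2$, and similarly the inner product $\inpro{\Fnor{\bar{z}_{k_1}}, x_2-x_1}$ inherited from the $f$-descent cannot be split off with an $\mathcal{O}(\alpha m)$ coefficient on $\norm{\Fnor{\bar{z}_{k_1}}}^2$ without paying $\mathcal{O}(1/(\alpha m))$ on $\norm{x_1-x_2}^2$. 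Since the lemma only assumes $\alpha m<\gamma/2$ (so $\alpha m$ may be arbitrarily small relative to $\gamma$), an $\mathcal{O}(1/\gamma)$ negative coefficient does not suffice. The paper manufactures the required $\mathcal{O}(1/(\alpha m))$ negativity by substituting $\Fnor{\bar{z}_{k_1}}=(\bar{z}_{k_1}-\bar{z}_{k_2}+e_{k_1:k_2})/(\alpha m)$ into the collected inner product (see \eqref{eq:Fnor2zbar_tw}) and then invoking the firm nonexpansiveness of the weakly convex proximal operator, $\inpro{\bar{z}_{k_1}-\bar{z}_{k_2}, x_1-x_2}\geq (1-\gamma\rho)\norm{x_1-x_2}^2$ (Lemma~\ref{lem:prox_wcvx}), yielding a term $-(1-\cC_0)(1/(\alpha m)-1/\gamma)(1-\gamma\rho)\norm{x_1-x_2}^2$. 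This step is absent from your outline and is the one that actually makes the absorption work; it is also the balance that fixes $\cC_0=(3-4\gamma\rho)/[2(3-4\gamma\rho+4\gamma^2L^2)]$ (your final paragraph has this attribution right, but your claim that $\cC_0$ is chosen to make the net $\norm{\Fnor{\bar{z}_{k_1}}}^2$ coefficient equal $-\alpha m\cC_0/2$ is not: that coefficient falls out identically, for any $\cC_0$, from the contraction factor $(1-\alpha m/\gamma)^2$ in the $\norm{\Fnor{\cdot}}^2$ recursion together with one Young term).
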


\begin{proof}
    See Appendix \ref{app:cH_descent}.
\end{proof}

Let the filtration $\crk{\cF_k}_{k\geq 0}$ be generated by $\crki{\xi_{i,\ell}| i\in\cN, \ell = 0,1,\ldots, k-1}$, the set of random variables (such as random data samples) utilized by the agents before the $k$-th iteration.
We next bound the term $\condEi{\normi{e_{k_1:k_2}}^2}{\cF_{k_1}}$, which arises due to the stochastic gradients and the consensus error among the networked agents.

\begin{lemma}
    \label{lem:ek1k2}
    Let Assumptions~\ref{as:smooth}-\ref{as:abcW} hold. Set $\alpha$ and $\gamma$ to satisfy {$\alpha\leq \min\crki{1/(16m\Ln\sqrt{\normi{D_1}^2+ 1}), \sqrt{1-\beta^2}/(4\normi{D_2} \Ln)}$, $\gamma\leq 1/[5(\C + L + \rho)]$.}
    Denote $\cM_{k_1:k_2}:=\sum_{p=k_1}^{k_2 - 1}\brki{\normi{\bar{z}_{k_1} - \bar{z}_p}^2 + \normi{\bu_p}^2/n}$. Then, for any $k_2= k_1 + m$ ($m\geq 1$),
    \begin{equation}
        \label{eq:Eek1k2_ub}
        \begin{aligned}
            &\condE{\norm{e_{k_1:k_2}}^2}{\cF_{k_1}} \leq \frac{24\alpha^2 m{\ccD_1^2}\beta_{0:m}^2}{n}\norm{\bu_{k_1}}^2\\
            &+ 3\alpha^2m H_1\crk{4\C\brk{\psi\prti{\proxp{\bar{z}_{k_1}}} - \bpsi} + \prt{2\C\sigfn + \sigma^2}}\\
            &+ { 6\alpha^2 m^2\brk{ \frac{\C\gamma}{mn} + \alpha^2m^2\ccD_1^2\prt{\normi{D_1}^2 + 2}}}\norm{\Fnor{\bar{z}_{k_1}}}^2,
        \end{aligned}
    \end{equation}
    and
    \small
    \begin{equation}
        \label{eq:sumk1k2_ub}
        \begin{aligned}
            &\condE{\cM_{k_1:k_2}}{\cF_{k_1}} \leq 2\alpha^2m^3\prt{ 2 + \normi{D_1}^2}\norm{\Fnor{\bar{z}_{k_1}}}^2\\
            &+ 8\alpha^2m \prt{\norm{D_1}^2\beta_{0:m}^2 + \frac{m}{n}}\crk{4\C \brk{\psi\prti{\proxp{\bar{z}_{k_1}}} - \bpsi} \right.\\
            &\left. + 2\C\sigfn + \sigma^2} + \frac{8\beta_{0:m}^2\normi{\bu_{k_1}}^2}{n},
        \end{aligned}
    \end{equation}\normalsize
    where{$H_1$, $\ccD_1^2$ and $\beta_{0:m}^2$ are defined in Table \ref{tab:notations}} and $\sigfn := f^* - \sumn f_i^*/n$.
    
\end{lemma}

\begin{proof}
See Appendix \ref{app:ek1k2}.
\end{proof}

Lemma \ref{lem:norm-abc_cons_tw} states the recursion of the transformed consensus error term.

\begin{lemma}
    \label{lem:norm-abc_cons_tw}
    Let Assumptions~\ref{as:smooth}-\ref{as:abcW} hold. Set $\alpha$ and $\gamma$ to satisfy {$\alpha\leq \min\crki{1/(16m\Ln\sqrt{\normi{D_1}^2+ 1}), \sqrt{1-\beta^2}/(4\normi{D_2} \Ln)}$, $\gamma\leq 1/[5(\C + L + \rho)]$.}
    Then, for any $k_2= k_1 + m$ ($m\geq 1$),
   \begin{equation}
    \label{eq:Euk_re}
     \begin{aligned}
         &\condE{\norm{\bu_{k_2}}^2}{\cF_{k_1}}\leq \frac{1 + \beta^{2m}}{2}\norm{\bu_{k_1}}^2 \\
         &+ 4\alpha^2 n\beta^2_{0:m}{ H_2}\norm{\Fnor{\bar{z}_{k_1}}}^2+ \frac{8\alpha^2m n\ccD_2^2}{1-\beta^{2m}}\condE{\cM_{k_1:k_2}}{\cF_{k_1}}\\
         &+ 2\alpha^2 n \beta_{0:m}^2{ H_2}\crki{4\C\brki{\psi\prti{{\proxp{\bar{z}_{k_1}}}} - \bpsi} + 2\C\sigfn + \sigma^2},
     \end{aligned}
   \end{equation}
   { where $H_2$ is defined in Table \ref{tab:notations}.}
    
\end{lemma}

\begin{proof}
     The proof { proceeds analogously to Step IV in the proof of Lemma~\ref{lem:ek1k2}}; we omit it here due to space limitations. Full details are provided in \cite[Appendix II-J]{huang2024distributed}.
\end{proof}

In Lemma \ref{lem:cL}, we obtain a critical relation related to the Lyapunov function $\cL_{k}$. It serves as a cornerstone for deriving the convergence guarantee of the considered algorithmic framework.

\begin{lemma}
    \label{lem:cL}
    Let Assumptions~\ref{as:smooth}-\ref{as:abcW} hold. Set {$\alpha$ and $\gamma$ to satisfy $\alpha\leq \min\{\nicefrac{1}{(80m\ccD_1\sqrt{\normi{D_1}^2 + 2})}, \nicefrac{(1-\beta^2)}{(80\ccD_1\sqrt{\normi{D_1}^2 + 2})},\\ 
    \nicefrac{\sqrt{(1-\beta^2)(1-\beta^{2m})}}{(80\sqrt{m} \ccD_2)}, \nicefrac{\sqrt{1-\beta^2}}{(80\normi{D_2}\ccD_1)}\}$ and $\gamma \leq 1/{[5(\rho + L + 18\C)]}$.}
    Then, we have for any $k_1\geq 0 \text{ and }k_2 = k_1 + m$ $(m\geq 1)$ that 
    {
    \begin{equation}
        \label{eq:cL}
        \begin{aligned}
            &\condE{\cL_{k_2}}{\cF_{k_1}}\leq \brk{1 + \frac{20\alpha\C}{n} + 4\C \cA(\alpha)}\cL_{k_1} \\
            & - \frac{\alpha m}{9}\norm{\Fnor{\bar{z}_{k_1}}}^2 + \brk{\frac{5\alpha }{n} + \cA(\alpha)}\prt{2\C\sigfn + \sigma^2},
        \end{aligned}
    \end{equation}
    where $\cA(\alpha)$ is a stepsize-dependent function that scales as $\orderi{\alpha^3}$; see Table~\ref{tab:notations} for its definition.
    }
\end{lemma}

\begin{proof}
    See Appendix \ref{app:cL}.
\end{proof}

\begin{remark}
    Suppose we consider a special case where $\C=0$ (bounded variance). Then, the recursion \eqref{eq:cL} can lead to a dominant error term $\orderi{\sigma^2/(mn)}$ due to the stochastic gradients when upper bounding the term $\sum_{{k_1}}\normi{\Fnor{\bar{z}_{k_1}}}^2$.
Thus, choosing $m = \orderi{\ceili{\sqrt{K/n}}}$ leads to a complexity of $\orderi{1/\sqrt{nK}}$ for a given iteration number $K>0$.
    The subsequent section formalizes such an argument for $\C\geq0$.
\end{remark}

\section{Convergence Results}
\label{sec:res}

In this section, we establish the iteration complexity for \normgt~and \normed~with respect to the iteration number $K>0$. Let $K = Tm + Q$, where $T \geq 0$ and $Q \in [0, m-1]$ are integers. We consider a subsequence $\crk{\tk_j}_{j\geq 0}$ of $\crki{0,1,\ldots, K-1}$. The subsequence is specified as follows and illustrated in Fig.~\ref{fig:tkj}.
\begin{align}
    \label{eq:tkj}
    \tk_0 = 0, \tk_1 = m, \tk_2 = 2m,\ldots, \tk_T = Tm.
\end{align}
\begin{figure}[ht]
    \centering
    \resizebox{0.48\textwidth}{!}{\begin{tikzpicture}
        \draw[thick,->] (0,0) -- (14,0) node[above left]{Iteration};
    
        \node at (0, 0) [below, yshift=-0.2cm] {$0$};
        \node at (1, 0) [below, yshift=-0.2cm] {$1$};
        \node at (2, 0) [below, yshift=-0.2cm] {$2$};
        \node at (4, 0) [below, yshift=-0.2cm] {$m$};
        \node at (5, 0) [below, yshift=-0.12cm] {$m+1$};
        \node at (7, 0) [below, yshift=-0.2cm] {$2m$};
        \node at (10, 0) [below, yshift=-0.2cm] {$Tm$};
        \node at (13, 0) [below, yshift=-0.2cm] {$K - 1$};
    
        \node at (0, -1) {$\tk_0$};
        \node at (4, -1) {$\tk_1$};
        \node at (7, -1) {$\tk_2$};
        \node at (10, -1) {$\tk_T$};
        \node at (13, -1) {$\tk_T + Q - 1$};
        
        \node at (3, 0) [below, yshift=-0.2cm] {$\cdots$};
        \node at (6, 0) [below, yshift=-0.2cm] {$\cdots$};
        \node at (8, 0) [below, yshift=-0.2cm] {$\cdots$};
        \node at (11, 0) [below, yshift=-0.2cm] {$\cdots$};
    
        \draw[thick] (-0.3, -0.2) -- (-0.3, -1.2) -- (0.3, -1.2) -- (0.3, -0.2) -- cycle;
        \draw[thick] (3.7, -0.2) -- (3.7, -1.2) -- (4.3, -1.2) -- (4.3, -0.2) -- cycle;
        \draw[thick] (6.7, -0.2) -- (6.7, -1.2) -- (7.3, -1.2) -- (7.3, -0.2) -- cycle;
        \draw[thick] (9.7, -0.2) -- (9.7, -1.2) -- (10.3, -1.2) -- (10.3, -0.2) -- cycle;
        \draw[thick] (12, -0.2) -- (12, -1.2) -- (13.9, -1.2) -- (13.9, -0.2) -- cycle;
    \end{tikzpicture}
    }
    \caption{An illustration of the subsequence $\crki{\tk_j}_{k\geq 0}$.}
    \label{fig:tkj}
\end{figure}

The goal is to upper bound $\sum_{k=0}^{K-1}\E\brki{\normi{\Fnor{\bar{z}_k}}^2}$ and $\sum_{k=0}^{K-1}\E\brki{\normi{\Pi\z_k}^2}$ according to Lemma \ref{lem:stationarity}. 
To achieve the goal, we utilize the multi-step analysis from Section~\ref{sec:ana}. Regarding $\sum_{k=0}^{K-1}\E\brki{\normi{\Fnor{\bar{z}_k}}^2}$, we divide the summation into two parts: one summing over the points $\tk_0,\tk_1,\ldots, \tk_T$, and the other summing over the interval $\tk_j, \tk_j + 1,\ldots,\tk_j + m - 1$ for each $\tk_j$ based on Fig.~\ref{fig:tkj}. Specifically, we have 
\begin{equation}
    \label{eq:sum_norm}
    \begin{aligned}
        &\sum_{k=0}^{K-1} \E\brk{\norm{\Fnor{\bar{z}_k}}^2} = \sum_{t=0}^{T-1}\sum_{q=0}^{m-1}\E\brk{\norm{\Fnor{\bar{z}_{\tk_t + q}}}^2}\\
        & + \sum_{q=0}^{Q-1}\E\brk{\norm{\Fnor{\bar{z}_{\tk_T + q}}}^2}\leq \sum_{t=0}^{T}\sum_{q=0}^{m-1}\E\brk{\norm{\Fnor{\bar{z}_{\tk_t + q}}}^2}.
    \end{aligned}
\end{equation}

A similar argument applies to the aggregated consensus error $\sum_{k=0}^{K-1}\E\brki{\normi{\Pi\z_k}^2}$. 

Lemma~\ref{lem:sum_dist} presents a way to upper bound the desired stationarity measure 
based on \eqref{eq:sum_norm}.

\begin{lemma}
    \label{lem:sum_dist}
    Let Assumptions \ref{as:smooth}, \ref{as:phi}, \ref{as:graph}, and \ref{as:abcW} hold. 
    We have 
    \small
    \begin{equation}
        \label{eq:sum_dist}
        \begin{aligned}
            &\frac{1}{nK}\sum_{k=0}^{K-1}\sumn\E\brk{\norm{\gamma^{-1}\Fnat{x_{i,k}}}^2}
            \leq \frac{4m\sum\limits_{t=0}^T \E\brki{\normi{\Fnor{\bar{z}_{\tk_t}}}^2}}{(1-\gamma\rho)^2K}\\
            &+ \frac{4\Ln^2\prti{\normi{V}^2 + 1}}{(1-\gamma\rho)^2K}\sum_{t=0}^T {\E\brk{\cM_{\tk_t: (\tk_{t+1})}}}.
        \end{aligned}
    \end{equation}\normalsize
\end{lemma}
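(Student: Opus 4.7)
The plan is to start with the pointwise bound from Lemma~\ref{lem:stationarity}, which already controls $\frac{1}{n}\sum_{i=1}^n\normi{\gamma^{-1}\Fnat{x_{i,k}}}^2$ by the two quantities $\normi{\Fnor{\bar z_k}}^2$ and $\Ln^2\normi{\Pi\z_k}^2/n$. Summing this bound over $k=0,\ldots,K-1$ then reduces the task to controlling these two aggregated sums in terms of the pivot values $\{\tk_t\}$ and the transformed consensus variables $\{\bu_k\}$.

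For the normal map sum, I would first use the decomposition in \eqref{eq:sum_norm} to write $\sum_{k=0}^{K-1}\normi{\Fnor{\bar z_k}}^2\leq\sum_{t=0}^T\sum_{q=0}^{m-1}\normi{\Fnor{\bar z_{\tk_t+q}}}^2$. Then at each non-pivot iteration $\tk_t+q$, I would invoke the Lipschitz continuity of $\Fnor{\cdot}$ from Lemma~\ref{lem:Fnor_Lip} together with the inequality $(a+b)^2\leq 2a^2+2b^2$ to obtain
\begin{equation*}
\norm{\Fnor{\bar z_{\tk_t+q}}}^2\leq 2\norm{\Fnor{\bar z_{\tk_t}}}^2+2\Ln^2\norm{\bar z_{\tk_t}-\bar z_{\tk_t+q}}^2.
\end{equation*}
Summing over $q=0,\ldots,m-1$ produces the $2m\norm{\Fnor{\bar z_{\tk_t}}}^2$ coefficient appearing in the claim, together with a remainder expressed in terms of $\normi{\bar z_{\tk_t}-\bar z_{\tk_t+q}}^2$, which matches the first piece on the right-hand side of \eqref{eq:sum_dist}.

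For the consensus sum, I would apply Remark~\ref{rem:cons_beta} to upper bound $\normi{\Pi\z_k}^2\leq\normi{V}^2\normi{\bu_k}^2$, and then decompose the sum over $k$ using $\{\tk_t\}$ exactly as in \eqref{eq:sum_norm}. This step converts the $\Pi\z_k$ term of Lemma~\ref{lem:stationarity} into one involving $\normi{\bu_{\tk_t+q}}^2/n$. At this point I would combine the two sums, multiplying by the common prefactor $2/(1-\gamma\rho)^2$ inherited from Lemma~\ref{lem:stationarity}, and bound each of the individual coefficients by the uniform factor $4\Ln^2(\normi{V}^2+1)/(1-\gamma\rho)^2$ in order to collect them into the single bracket $[\normi{\bar z_{\tk_t}-\bar z_{\tk_t+q}}^2+\frac{1}{n}\normi{\bu_{\tk_t+q}}^2]$ that appears in \eqref{eq:sum_dist}. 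Finally taking expectations finishes the proof.

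There is no real obstacle here; the argument is a bookkeeping exercise built from three previously established results (Lemmas~\ref{lem:stationarity} and~\ref{lem:Fnor_Lip} together with Remark~\ref{rem:cons_beta}) and the subsequence decomposition~\eqref{eq:sum_norm}. The only subtle point is the choice of introducing the intermediate term $\normi{\bar z_{\tk_t}-\bar z_{\tk_t+q}}^2$ via the Lipschitz inequality, which is precisely the quantity already handled in Lemma~\ref{lem:ek1k2} (equation~\eqref{eq:sumk1k2_ub}); this bookkeeping choice is what makes the resulting bound compatible with the Lyapunov recursion developed in Section~\ref{subsec:multi}.
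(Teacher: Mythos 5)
Your proposal is correct and follows essentially the same route as the paper's proof: apply Lemma~\ref{lem:stationarity}, split both aggregated sums via the subsequence \eqref{eq:sum_norm}, use the $\Ln$-Lipschitz continuity of $\Fnor{\cdot}$ with $(a+b)^2\leq 2a^2+2b^2$ to pass from $\bar{z}_{\tk_t+q}$ to $\bar{z}_{\tk_t}$, and bound $\normi{\Pi\z_k}^2\leq\normi{V}^2\normi{\bu_k}^2$ before collecting coefficients under the common factor $4\Ln^2(\normi{V}^2+1)/(1-\gamma\rho)^2$. No gaps.
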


\begin{proof}
    The proof is straightforward by noting $\normi{\Fnor{\bar{z}_{\tk_t + q}}}^2 \leq 2\Ln^2 \normi{\bar{z}_{\tk_t} - \bar{z}_{\tk_t + q}}^2 + 2\normi{\Fnor{\bar{z}_{\tk_t}}}^2$ and is omitted here. The details can be found in \cite[Appendix II-K]{huang2024distributed}.
\end{proof}

Lemma \ref{lem:cL_bounded} states that the terms in $\crki{ \E\brki{\cL_{\tk_j}}}_{j=0}^{T}$ are bounded for any given $T\geq 0$ if the stepsize $\alpha$ is chosen properly. Such a result helps decouple the right-hand-side of \eqref{eq:sum_dist}, which involves the summation of $\E\brki{\cL_{\tk_j}}$ for $j=0,1,\ldots,T$ due to~\eqref{eq:sumk1k2_ub}.

\begin{lemma}
    \label{lem:cL_bounded}
    Let Assumptions~\ref{as:smooth}-\ref{as:abcW} hold. Set $\alpha$ and $\gamma$ to satisfy{$ \gamma \leq \nicefrac{1}{[5(\rho + L + 18\C)]}$ and 
    \small
        \begin{align*}
            \alpha&\leq \min\crk{\frac{1}{80m\ccD_1\sqrt{\normi{D_1}^2 + 2}}, \frac{1-\beta^2}{80\ccD_1\sqrt{\normi{D_1}^2 + 2}},\frac{\sqrt{1-\beta^2}}{80\normi{D_2}\ccD_1}\right.\\
            &\left. \frac{\sqrt{(1-\beta^2)(1-\beta^{2m})}}{80\sqrt{m} \ccD_2}, \frac{mn}{96\C K}, \prt{\frac{1-\beta^2}{3192\normi{D_1}^2\ccD_1^{2}\C K}}^{\frac{1}{3}}, \right.\\
            &\left.\prt{\frac{n(1-\beta^2)^2}{9600 \normi{D_2}^2 \C\ccD_2^2\Ln^2K}}^{\frac{1}{5}}}.
        \end{align*}\normalsize
    }

    Then, we have for any $0<j\leq T + 1$ that 
    \begin{equation}
        \label{eq:cLkj_ub}
        \begin{array}{l}
            \E {\brki{\cL_{\tk_j}}}\leq \exp\prt{1}\brk{\cL_{0} + \frac{2\C\sigfn + \sigma^2}{4\C}}=:\hat{\cL}.
        \end{array}
    \end{equation}
    
\end{lemma}

\begin{proof}
    See Appendix \ref{app:cL_bounded}.
\end{proof}

Now we are ready to state the complexity result for the unified framework \eqref{eq:norm-abc} in Theorem \ref{thm:abc} below.

\begin{theorem}
    \label{thm:abc}
    Let Assumptions~\ref{as:smooth}-\ref{as:abcW} hold. Set {$\alpha$ and $\gamma$ to satisfy the conditions in Lemma \ref{lem:cL_bounded}}.
    Then,
    {
    \small
    \begin{equation}
        \label{eq:E_Prox-grad}
        \begin{aligned}
            &\frac{1}{nK}\sum_{k=0}^{K-1}\sumn \E\brk{\norm{\frac{\Fnat{x_{i,k}}}{\gamma}}^2} 
        \leq \frac{71\Delta_\psi}{\alpha K} + \frac{36\gamma \normi{\Fnor{\bar{z}_0}}^2}{\alpha K} \\
        &  + \frac{7032\normi{V^{-1}}^2\ccD_1^2\normi{\Pi\z_0}^2 + \sumn\normi{\Fnori{i}{\bar{z}_0}}^2}{n(1-\beta^2) K} + \frac{703\tilde{\sigma}^2}{mn} \\
        & + \frac{20200 \alpha^2 \ccD_2^2\tilde{\sigma}^2}{1-\beta^2}+ \frac{61200\alpha^4 \normi{D_2}^2\ccD_2^4\tilde{\sigma}^2}{n(1-\beta^2)^2},
        \end{aligned}
    \end{equation}\normalsize
    where $\Delta_\psi:= \psi(\bar{x}_0) - \bpsi$, $\tilde{\sigma}^2 := \exp(1)\prti{\C\cL_0 + 2\C\sigfn + \sigma^2}$, and $\cL_0 \leq \Delta_\psi + \gamma\normi{\Fnor{\bar{z}_0}}^2/2 + 100\alpha\normi{V^{-1}}^2\ccD_1^2 \normi{\Pi\z_0}^2/[n(1-\beta^2)] + \alpha \sumn\normi{\Fnori{i}{\bar{z}_0}}^2/[80(1-\beta^2)]$.
    }

    In addition, if we set {$m = \ceili{\sqrt{\nicefrac{K\tilde{\sigma}^2}{(n\ccD_2\Delta_\psi)}}+ \nicefrac{5}{(1-\beta^2)}}$,
    \small 
    \begin{equation}
        \label{eq:alpha_order}
        \begin{aligned}
            \alpha &= \frac{1}{96\sqrt{\frac{K\ccD_2\tilde{\sigma}^2}{n\Delta_\psi}} + \eta},\text{ and } \gamma = \frac{1}{5(\rho + L + 18\C)},
        \end{aligned}
    \end{equation}\normalsize
    with $\eta= [\nicefrac{9600\normi{D_2}^2\C \ccD_2^2\Ln^2 K}{[n(1-\beta^2)^2]}]^{1/5}+ \nicefrac{400\ccD_2}{(1-\beta^2)} +  \nicefrac{80\normi{D_2}\ccD_1}{\sqrt{1-\beta^2}}$ 
    $+ [\nicefrac{(3192\normi{D_1}^2\ccD_1^2\C K)}{(1-\beta^2)}]^{1/3}$ $+ \nicefrac{160\ccD_2[K\tilde{\sigma}^2/(n\ccD_2\Delta_\psi)]^{1/4}}{\sqrt{1-\beta^2}}$,
    }
    then 
    {
    \small
    \begin{equation}
        \label{eq:EFnor_order}
        \begin{aligned}
            &\frac{1}{nK}\sum_{k=0}^{K-1}\sumn\E\brk{\norm{\gamma^{-1}\Fnat{x_{i,k}}}^2}= \order{\sqrt{\frac{D_g\tilde{\sigma}^2}{nK}} \right.\\
            &\left. + \frac{D_g^{3/4}}{\sqrt{{1-\beta}}(nK^3)^{1/4}} + \frac{D_g^{2/3}}{[(1-\beta) K^2]^{1/3}} + \frac{\prt{\normi{D_2}D_g}^{2/5}}{[n(1-\beta)^2 K^4]^{1/5}} \right.\\
            &\left. + \frac{D_g}{(1-\beta)K} + \frac{\normi{D_2}(\normi{V} + 1)}{\sqrt{1-\beta}K} + \frac{\normi{V^{-1}}^2 \prti{\normi{V}^2 + 1}\normi{\Pi\z_0}^2 }{n(1-\beta)K}\right.\\
            &\left. + \frac{ \sumn\normi{\Fnori{i}{\bar{z}_0}}^2}{n(1-\beta)K}},
        \end{aligned}
    \end{equation}\normalsize
    where $D_g:= \sqrt{(\normi{D_1}^2 + 1)(\normi{V}^2 + 1)}$.
    }

\end{theorem}

\begin{proof}
    See Appendix \ref{app:abc}. 
\end{proof}

In what follows, we state the convergence guarantees for \normgt~and \normed. In particular, we not only demonstrate that the proposed algorithms enjoy $\orderi{1/\sqrt{nK}}$ rate of convergence when $K$ is large enough, but also establish the transient times \eqref{eq:def_kt} required for \normgt~and \normed~ to achieve convergence rates comparable to norM-CSGD, respectively. The proof is straightforward once we specify the choices of $A$, $B$, and $C$ in Theorem \ref{thm:abc}. { Both Corollaries \ref{cor:gt} and~\ref{cor:ed} assume that $\normi{\Pi\z_0}^2\sim\orderi{n^2}$ and $\sumn\normi{\Fnori{i}{\bar{z}_0}}^2\sim\orderi{n^2}$, which is mild in practice \cite{huang2024accelerated,nedic2018network}.}

\small
\begin{equation}
    \label{eq:def_kt}
    \begin{aligned}
        \KTN:=\inf_{K}&\crk{\frac{1}{nk}\sum_{t=0}^{k-1}\sumn\E\brk{\norm{\gamma^{-1}\Fnat{x_{i,t}}}^2}\right.\\
        &\left.\leq \order{\frac{1}{\sqrt{nk}}},\ \forall k\geq K}
    \end{aligned}
\end{equation}\normalsize

\begin{corollary}
    \label{cor:gt}
    Suppose the conditions in Theorem \ref{thm:abc} hold. Let $A = W$, $B = I-W$, and $C = W$. Then, $\normi{V}^2\leq 3,\; \normi{V^{-1}}^2\leq 9,\; \normi{D_1}^2\leq 9, \normi{D_2}^2 \leq {1}/{(1-\lambda)^2},\; \beta = \lambda.$ { In addition, $\tilde{\sigma}$ and $D_g$ are constants independent of $n$, $K$, and $1/(1-\lambda)$.} 
    The iterates generated by \normgt~satisfy 
    \small
    \begin{equation}
        \label{eq:EFnor_gt}
        \begin{aligned}
            &\frac{1}{nK}\sum_{k=0}^{K-1}\sumn\E\brk{\norm{\gamma^{-1}\Fnat{x_{i,k}}}^2}= \order{\frac{1}{\sqrt{nK}} \right.\\
            &\left.+ \frac{1}{\sqrt{{1-\lambda}}(nK^3)^{1/4}} + \frac{1}{[\prti{1-\lambda} K^2]^{1/3}} + {\frac{1}{(1-\lambda)^{3/2} K}}  \right.\\
            &\left.+  {\frac{1}{[n(1-\lambda)^4 K^4]^{1/5}}} + \frac{n}{(1-\lambda)K} }.
        \end{aligned}
    \end{equation}\normalsize

    The transient time of \normgt~is given by
    \begin{align*}
        \order{\max\crk{\frac{n^3}{(1-\lambda)^2}, {\frac{n}{(1-\lambda)^{3}}}}}.
    \end{align*}
\end{corollary}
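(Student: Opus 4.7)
The plan is to specialize Theorem~\ref{thm:abc} to the concrete matrices $A = W$, $B = I - W$, $C = W$ used in \normgt. I would organize the argument in three steps.

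First, I would verify the matrix constants by a direct spectral analysis of $\Gamma$. With the given choice, $\hLambda_a = \hLambda_c = \hLambda$ and $\hLambda_b = I - \hLambda$, so the inner block matrix in the definition of $\Gamma$ from Lemma~\ref{lem:re} decouples across each eigenvalue $\lambda_i$ of $W$ (for $i \geq 2$) into the $2\times 2$ block $M_i = \begin{pmatrix} 2\lambda_i - 1 & -(1-\lambda_i) \\ 1-\lambda_i & 1 \end{pmatrix}$, whose trace is $2\lambda_i$ and determinant is $\lambda_i^2$, so both of its eigenvalues equal $\lambda_i$. A block-wise invertible $V$ (analogous to the construction used in the unified analysis of \cite{alghunaim2021unified}) then yields the bounds $\normi{V}^2 \leq 3$, $\normi{V^{-1}}^2 \leq 9$, and $\beta = \normi{\Gamma} = \lambda$. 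The bounds $D_1^2 \leq 9$ and $D_2^2 \leq 1/(1-\lambda)^2$ follow from their definitions using $\normi{\hU^{\T}} = 1$, $\normi{\hLambda_a} \leq \lambda \leq 1$, and $\normi{\hLambda_b^{-1}} = 1/(1-\lambda)$.

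Second, I would substitute $\normi{V}^2 \leq 3$, $D_1^2 \leq 9$, $D_2^2 \leq 1/(1-\lambda)^2$, and $\beta = \lambda$ into the general rate \eqref{eq:EFnor_order}. Each of the seven terms in \eqref{eq:EFnor_order} translates directly into the corresponding term in \eqref{eq:EFnor_gt} after absorbing absolute constants into the $\mathcal{O}$ notation; in particular, the $D_2$-dependent terms contribute a factor $1/(1-\lambda)^2$, and the $\beta$-dependent terms contribute $1/(1-\lambda)$.

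Third, I would extract the transient time by determining the smallest $K$ beyond which every non-$1/\sqrt{nK}$ term in \eqref{eq:EFnor_gt} is bounded above by $1/\sqrt{nK}$. A direct term-by-term comparison yields the following conditions: $1/[\sqrt{1-\lambda}(nK^3)^{1/4}] \leq 1/\sqrt{nK}$ requires $K \geq n/(1-\lambda)^2$; both $1/[(1-\lambda)^2 K]$ and $(1/(n^2(1-\lambda)^2 K^3))^{1/5}$ require $K \geq n/(1-\lambda)^4$; and both $(1/((1-\lambda)K^2))^{1/3}$ and $n/[(1-\lambda)K]$ require $K \geq n^3/(1-\lambda)^2$. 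Taking the maximum of the binding constraints gives the claimed transient time $\order{\max\{n^3/(1-\lambda)^2,\,n/(1-\lambda)^4\}}$. The main obstacle will be Step~1: because $M_i$ is non-diagonalizable whenever $\lambda_i \neq 1$ (its eigenspace is one-dimensional), a naive Jordan form would inject an off-diagonal one that forces $\normi{\Gamma} > \lambda$. Achieving the clean equality $\beta = \lambda$ together with absolute-constant bounds on $\normi{V}$ and $\normi{V^{-1}}$ requires a carefully tailored block-wise similarity transform for \normgt; once this is in place, Steps~2 and~3 reduce to algebraic bookkeeping.
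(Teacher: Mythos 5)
Your proposal matches the paper's (implicit) proof: the paper offers no separate argument for this corollary beyond the remark that it is "straightforward once we specify the choices of $A$, $B$, and $C$ in Theorem~\ref{thm:abc}," and your Steps 2--3 carry out exactly that specialization --- the substitution of $\normi{V}^2\leq 3$, $D_1^2\leq 9$, $D_2^2\leq 1/(1-\lambda)^2$, $\beta=\lambda$ into \eqref{eq:EFnor_order} and the term-by-term comparison against $1/\sqrt{nK}$ --- and every exponent you compute ($K\geq n/(1-\lambda)^2$, $K\geq n/(1-\lambda)^4$, $K\geq n^3/(1-\lambda)^2$) checks out, giving the stated maximum. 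The defectiveness issue you flag in Step 1 (each $2\times 2$ block $M_i$ has a double eigenvalue $\lambda_i$ with a one-dimensional eigenspace, so $\normi{\Gamma}=\lambda$ exactly cannot coexist with absolute-constant bounds on $\normi{V}$ and $\normi{V^{-1}}$) is a genuine subtlety, but it is equally unaddressed by the paper, which simply asserts these constants; the standard remedy (a scaled block similarity giving $\normi{\Gamma}$ bounded by, e.g., $(1+\lambda)/2$ at the cost of only constant-factor changes in $\normi{V}\normi{V^{-1}}$) leaves all the stated orders, and hence the transient time, unchanged.
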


\begin{remark}
    When the problem of interest is smooth, i.e., the term $\varphi(x)\equiv 0$, the transient time of DSGT behaves as $\orderi{\max\crki{{n^3}/{(1-\lambda)^2}, {n}/{(1-\lambda)^{8/3}} }}$ \cite{alghunaim2021unified}.
    For ring networks { where $(1-\lambda)\sim \orderi{n^{-2}}$, both \normgt\ and DSGT achieve the same transient time that behaves as $\orderi{n^7}$.} 
\end{remark}

Next, we show the same transient time for \normed~compared to its non-proximal counterpart \cite{huang2023cedas,alghunaim2021unified}, stated in Corollary \ref{cor:ed}.
\begin{corollary}
    \label{cor:ed}
    Suppose the conditions in Theorem \ref{thm:abc} hold and assume further that $W$ is positive semi-definite\footnote{We can choose $(I +W) /2$ as the mixing matrix that is positive semi-definite for any symmetric doubly stochastic matrix $W$. { For this case, $\underline{\lambda}\in(0.5,1)$.}}. Let $A = W$, $B = \prti{I-W}^{1/2}$, and $C = I$. Then, $\normi{V}^2\leq 4,\; \normi{V^{-1}}^2\leq {2}/{\underline{\lambda}},\; \normi{D_1}^2\leq 1, \normi{D_2}^2 \leq {1}/\prti{1-\lambda},\; \beta = \sqrt{\lambda},$
    where $\underline{\lambda}$ is the minimum non-zero eigenvalue of $W$. { In addition, $\tilde{\sigma}$ and $D_g$ are constants independent of $n$, $K$, and $1/(1-\lambda)$.}

    The iterates generated by \normed~satisfy 
    \small
    \begin{equation}
        \label{eq:EFnor_ed}
        \begin{aligned}
            &\frac{1}{nK}\sum_{k=0}^{K-1}\sumn\E\brk{\norm{\gamma^{-1}\Fnat{x_{i,k}}}^2}= \order{\frac{1}{\sqrt{nK}}  \right.\\
            &\left.+ \frac{1}{\sqrt{{1-\lambda}}(nK^3)^{1/4}} + \frac{1}{[\prti{1-\lambda} K^2]^{1/3}}  \right.\\
            &\left. + {\frac{1}{[n(1-\lambda)^3 K^4]^{1/5}}}+ \frac{n}{(1-\lambda)K} }.
        \end{aligned}
    \end{equation}\normalsize

    The transient time of \normed~is thus given by
    \begin{align*}
        \order{\frac{n^3}{(1-\lambda)^2}}.
    \end{align*}
\end{corollary}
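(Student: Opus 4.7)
The plan is to specialize Theorem~\ref{thm:abc} to the choice $A=W$, $B=(I-W)^{1/2}$, $C=I$ so that the framework \eqref{eq:norm-abc} instantiates as \normed. With these matrices, $\hat{\Lambda}_a=\hat{\Lambda}$, $\hat{\Lambda}_b^2=I_{n-1}-\hat{\Lambda}$, and $\hat{\Lambda}_c=I_{n-1}$, so the inner block of $\Gamma$ in Lemma~\ref{lem:re} is $\left(\begin{smallmatrix} 2\hat{\Lambda}-I_{n-1} & -\hat{\Lambda}_b \\ \hat{\Lambda}_b & I_{n-1} \end{smallmatrix}\right)$. All sub-blocks commute, so after a suitable permutation this matrix decouples into $n-1$ independent $2\times 2$ blocks
\begin{equation*}
M_i=\begin{pmatrix} 2\lambda_i-1 & -\sqrt{1-\lambda_i} \\ \sqrt{1-\lambda_i} & 1 \end{pmatrix},\quad i=2,\ldots,n,
\end{equation*}
whose characteristic polynomials $t^2-2\lambda_i t+\lambda_i$ have roots $\lambda_i\pm i\sqrt{\lambda_i(1-\lambda_i)}$ (valid since $W$ is positive semi-definite). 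Both roots have modulus $\sqrt{\lambda_i}$, so $\beta=\sqrt{\lambda}$.

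I would then realize $\|\Gamma\|=\sqrt{\lambda}$ by taking $V$ as the block-diagonal assembly (post permutation) of the real canonical-form similarities $P_i=\left(\begin{smallmatrix} 1 & 0 \\ -\sqrt{1-\lambda_i} & -\sqrt{\lambda_i} \end{smallmatrix}\right)$; Frobenius-norm estimates yield $\|P_i\|^2\leq 2$ and $\|P_i^{-1}\|^2\leq (1+\lambda_i)/\lambda_i\leq 2/\underline{\lambda}$, confirming $\|V\|^2\leq 4$ and $\|V^{-1}\|^2\leq 2/\underline{\lambda}$. For the quantities $D_1^2$ and $D_2^2$, the key observation is that the $1/\sqrt{\lambda_i}$ growth of $P_i^{-1}$ is cancelled block-by-block by the $\lambda_i$ factor coming from $\hat{\Lambda}_a$ in $\hat{\Lambda}_a\hat{U}^{\T}$ (respectively, by $\lambda_i/\sqrt{1-\lambda_i}$ in $\hat{\Lambda}_b^{-1}\hat{\Lambda}_a\hat{U}^{\T}$). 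This block-wise cancellation, rather than a naive product of operator norms, yields the tight bounds $D_1^2\leq 1$ and $D_2^2\leq 1/(1-\lambda)$.

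With these five constants and $1/(1-\beta)=(1+\sqrt{\lambda})/(1-\lambda)\leq 2/(1-\lambda)$, substitution into \eqref{eq:EFnor_order} produces \eqref{eq:EFnor_ed}, where $D_2^2\|V\|^2/K=\mathcal{O}(1/[(1-\lambda)K])$ is absorbed into $n/[(1-\lambda)K]$ since $n\geq 1$. For the transient time, I would compare each remaining term in \eqref{eq:EFnor_ed} to the target $1/\sqrt{nK}$: the binding constraints are $((1-\lambda)K^2)^{-1/3}\lesssim 1/\sqrt{nK}$ and $n/[(1-\lambda)K]\lesssim 1/\sqrt{nK}$, both demanding $K=\Omega(n^3/(1-\lambda)^2)$, while all other terms impose only $K=\Omega(n/(1-\lambda)^2)$. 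The main obstacle is the tight computation of $D_1^2$ and $D_2^2$: a crude submultiplicative estimate would introduce an unacceptable factor $1/\underline{\lambda}$, and avoiding it requires explicitly exploiting the block-diagonal interplay between $V^{-1}$ and the diagonal matrices $\hat{\Lambda}_a$ and $\hat{\Lambda}_b$.
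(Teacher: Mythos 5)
Your proposal is correct and follows essentially the same route as the paper: the paper's proof of Corollary~\ref{cor:ed} consists precisely of specializing Theorem~\ref{thm:abc} to $A=W$, $B=(I-W)^{1/2}$, $C=I$, asserting the stated bounds on $\norm{V}$, $\norm{V^{-1}}$, $D_1$, $D_2$, $\beta$, and substituting them into \eqref{eq:EFnor_order}. Your additional derivation of those constants via the $2\times 2$ block reduction of $\Gamma$ and the block-wise cancellation in $D_1$ and $D_2$ is consistent with the values the paper states, and your term-by-term comparison identifying the binding constraints $K=\Omega\prt{n^3/(1-\lambda)^2}$ matches the claimed transient time.
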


\section{Numerical Experiments}
\label{sec:sims}


This section presents { three} numerical examples that illustrate the superior performance of \normed~and \normgt~in comparison to existing methods over ring graphs { and Erd\"os-R\'enyi graphs}. All methods utilize the same stochastic oracle to output stochastic gradients and apply the proximal operator once per iteration. Regarding the data distribution, we consider the heterogeneous setting, i.e., the samples are first sorted by their labels and then partitioned among the agents. The stationarity measure for all figures is $\sumn\E\brki{\normi{\gamma^{-1}\Fnat{x_{i,k}} }^2}/n$ for the distributed methods and $\E\brki{\normi{\gamma^{-1} \Fnat{x_k}}^2}$ for centralized methods.

\subsection{Vision-based Steering Control}
\label{subsec:steering}

{
We consider a distributed constrained optimization problem motivated by vision-based steering control \cite{li2015vision}. We use Dataset~1 in SullyChen driving datasets\footnote{https://github.com/SullyChen/driving-datasets}. 
Each image is converted to grayscale, downsampled to $20\times 36$, and vectorized into a feature vector in $\R^{720}$. 
The resulting optimization problem is
\begin{equation}
    \label{eq:steering}
    \min_{x\in\R^p} \psi(x)=
    \frac{1}{n}\sumn
    \frac{1}{|\mathcal{S}_i|}
    \sum_{j=1}^{|\mathcal{S}_i|}
    \frac{1}{2}\big(a_{ij}^{\T}x - b_{ij}\big)^2
    + \delta_{\mathcal{X}}(x),
\end{equation}
where $a_{ij}\in\R^p$ denotes the processed image at agent $i$, $b_{ij}\in\R$ is the normalized steering angle, and $\delta_{\mathcal{X}}$ is the indicator of the box constrained set $\mathcal{X}:=\{x\in\R^p:x \in[-C,C]^p\}$ with $C = 0.005$.
The box constraint reflects the physical requirement that the control input be bounded \cite{li2015vision}. 
 
We compare \normed~and \normgt~with Prox-DASA, Prox-DASA-GT in \cite{xiao2023one}, as well as the benchmark methods norM-CSGD and Prox-CSGD, in Fig.~\ref{fig:steering}. Both \normed~and \normgt~converge faster than Prox-DASA and Prox-DASA-GT, and achieve accuracy closer to that of the centralized methods, demonstrating the advantage of the proposed approaches. In particular, \normed~achieves much faster convergence on the sparse ring graph (corresponding to a smaller $1-\lambda$), demonstrating its superior transient performance in practice.

\begin{figure}[htbp]
    \centering
    \subfloat[Ring graph, $n=50$, $1-\lambda = 2.6\times 10^{-3}$.]{\includegraphics[width=0.24\textwidth]{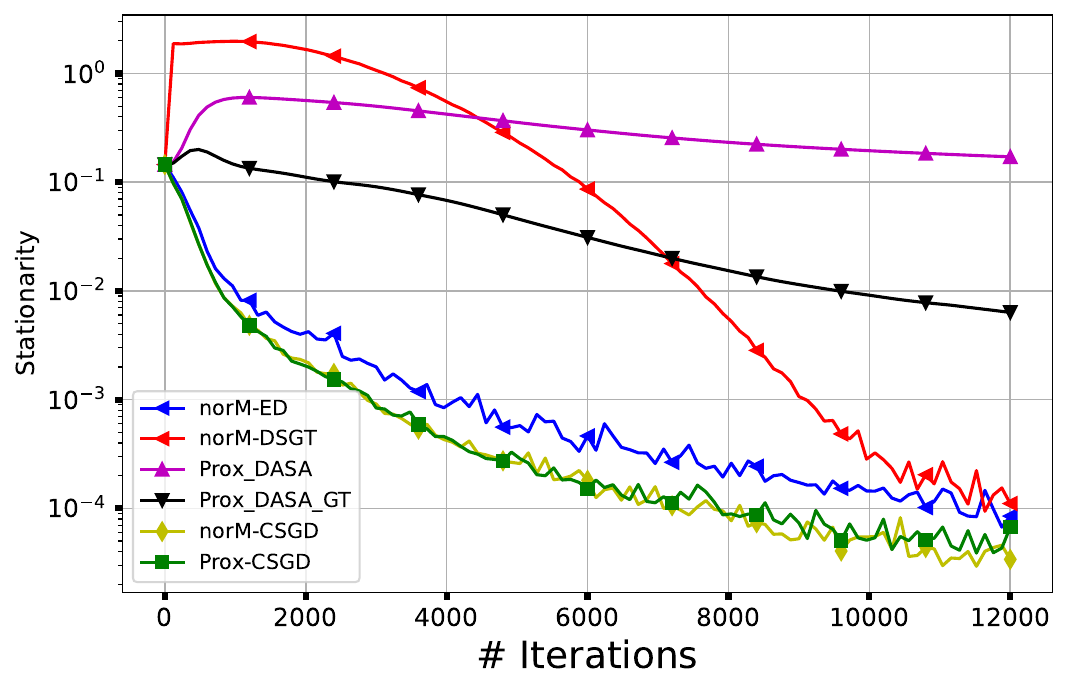}\label{fig:steering_ring50}}
    \subfloat[{ Erd\"os-R\'enyi graph, $n=50$, $1-\lambda = 0.1$.}]{\includegraphics[width=0.24\textwidth]{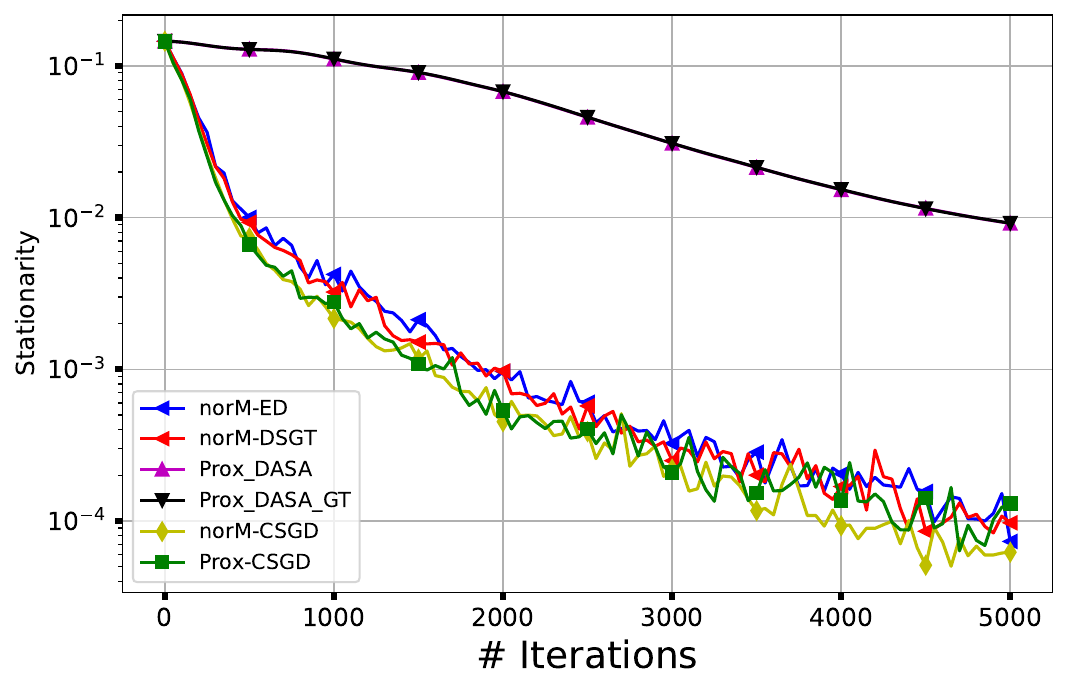}\label{fig:steering_er50}}
    \caption{ Comparison of the proposed methods and baselines for solving Problem \eqref{eq:steering} on the SullyChen dataset. For all methods, the stepsize is set to $0.0005$ on the ring graph and $0.001$ on the Erd\"os-R\'enyi graph ($\text{prob}=0.1$), the batch size is fixed at $32$, and the proximal parameter $\gamma$ is set to $0.1$.
    The results are averaged over $10$ independent runs.}
    \label{fig:steering}
\end{figure}

}
\subsection{Sparse Binary Classification}
\label{subsec:sparse}

We evaluate the algorithms on a sparse nonconvex binary classification problem \eqref{eq:sparse} following that in \cite{milzarek2019stochastic,milzarek2023convergence}. 
The experiment uses the MNIST dataset \cite{mnist}, extracting handwritten digits $2$ and $6$ as the training samples.
The optimization problem is formulated as follows:

\begin{equation}
    \label{eq:sparse}
        \min_{x\in\R^p} \psi(x) = \frac{1}{n}\sumn \frac{1}{|\mathcal{S}_i|}\sum_{j=1}^{|\mathcal{S}_i|} \brk{1 - \mathrm{tanh}\prt{b_{ij}\cdot a_{ij}^{\T}x}} + \nu\norm{x}_1,
\end{equation}
where $\cS_i$ is the local dataset of agent $i$, $a_{ij}\in\R^p$ is the $j$-th training sample, and $b_{ij}\in\crki{-1, 1}$ is the associated label of agent $i$. The hyperbolic tangent function, $\mathrm{tanh}(x)$, is defined as $\mathrm{tanh}(x):= [\mathrm{e}^x - \mathrm{e}^{-x}]/[\mathrm{e}^x + \mathrm{e}^{-x}]$. We set $\nu = 0.01$ in the simulations. 

We compare { the aforementioned algorithms}
in Fig.~\ref{fig:sparse} to highlight the enhanced performance of the proposed methods. 
 It can be seen that both \normed\footnote{ When the stepsize changes, we reset the iteration counter $k$ to zero in \normed. The total number of iterations is kept the same as in the other algorithms.} and \normgt~outperform Prox-DASA and Prox-DASA-GT, indicating their enhanced transient times. Notably, \normed~shows better performance than \normgt~on the ring graph. 

\begin{figure}[htbp]
    \centering
    \subfloat[Ring graph, $n=50$, $1-\lambda = 2.6\times 10^{-3}$.]{\includegraphics[width=0.24\textwidth]{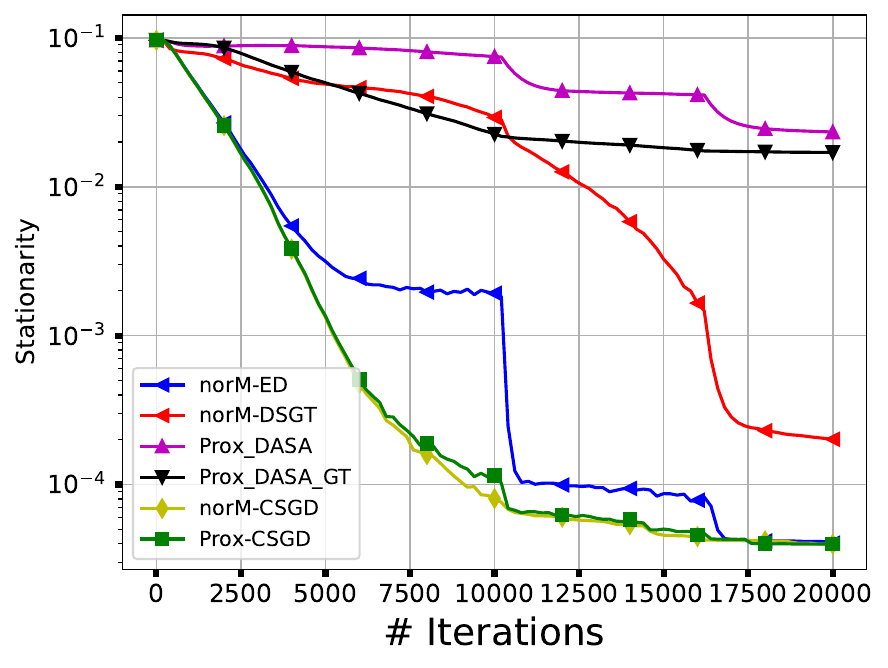}\label{fig:sparse_ring}}
    \subfloat[{ Erd\"os-R\'enyi graph, $n=50$, $1-\lambda = 0.12$.}]{\includegraphics[width=0.24\textwidth]{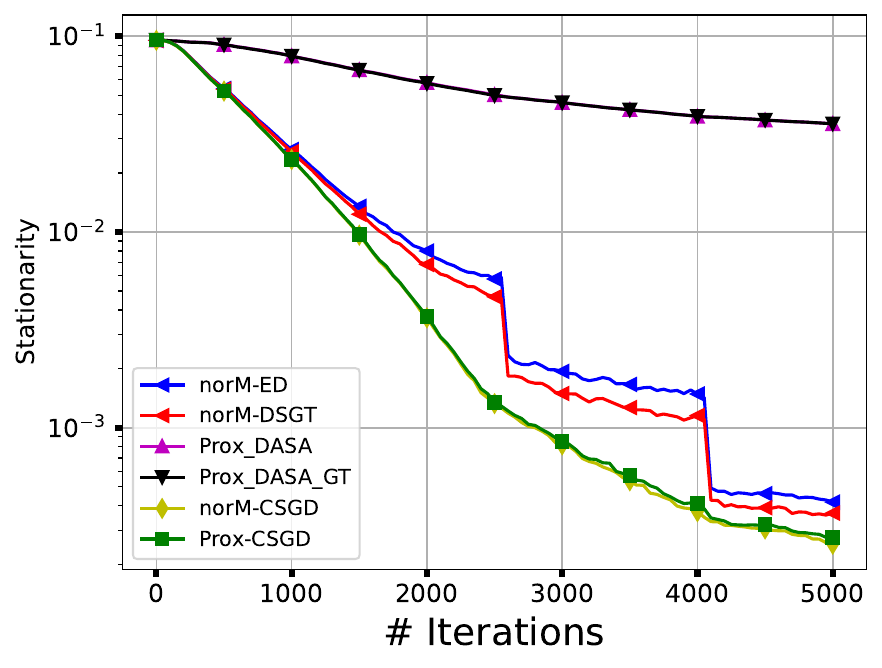}\label{fig:sparse_er}}
    \caption{ Comparison of the proposed methods and baselines for solving Problem \eqref{eq:sparse} on the MNIST dataset. For all the methods, the stepsizes are sequentially set to $1/20$, $1/100$, and $1/500$ on the ring graph, and $1/10$, $1/20$, and $1/40$ on the Erd\"os-R\'enyi graph ($\text{prob}=0.3$), and the proximal parameter $\gamma$ is set to $0.2$. 
    The results are averaged over $10$ independent runs.}
    \label{fig:sparse}
\end{figure}

\subsection{Neural Network}
\label{subsec:nn}

We further evaluate the aforementioned algorithms on a multi-class image classification task using the MNIST dataset.
The objective is to train a one-hidden-layer neural network with the sigmoid activation function and the elastic network \cite{zou2005regularization} regularizer $\varphi(x) = \nu_1 \normi{x}_1 + \nu_2\normi{x}_2^2$. Here, the parameters $\nu_1 = 0.001$ and $\nu_2 = 0.005$. Such a problem aligns with the structure of Problem \eqref{eq:P}.

Fig.~\ref{fig:nn} highlights the enhanced performance of \normed~and \normgt~compared to previous works. In particular, the proposed methods outperform the Prox-SGD type distributed optimization methods and demonstrate comparable convergence to centralized algorithms. 

\begin{figure}[htbp]
    \centering
    \subfloat[Ring graph, $n=30$, $1-\lambda = 7.3\times 10^{-3}$.]{\includegraphics[width=0.24\textwidth]{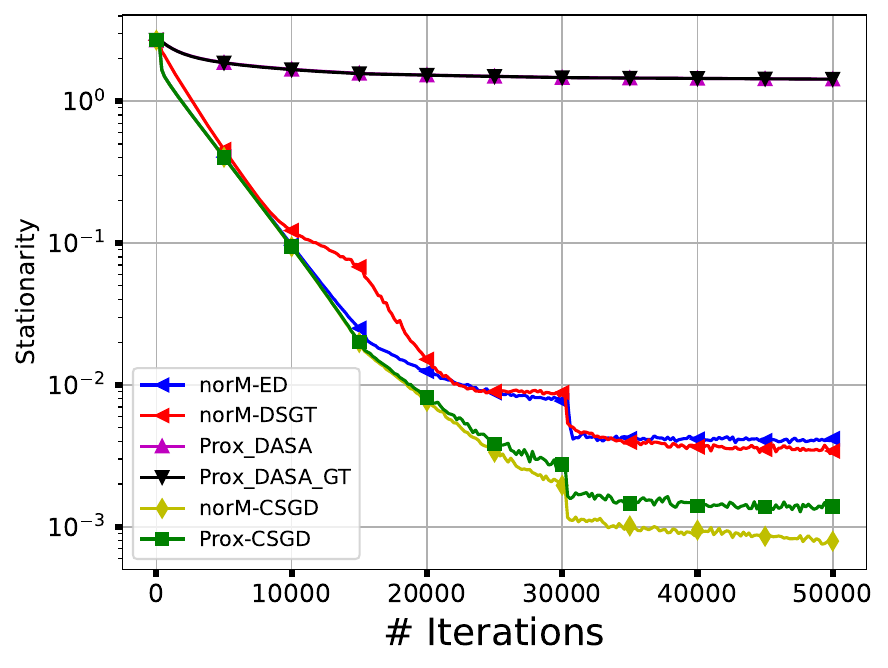}\label{fig:nn_n30}}
    \subfloat[{ Erd\"os-R\'enyi graph, $n=30$, $1-\lambda = 0.16$.}]{\includegraphics[width=0.24\textwidth]{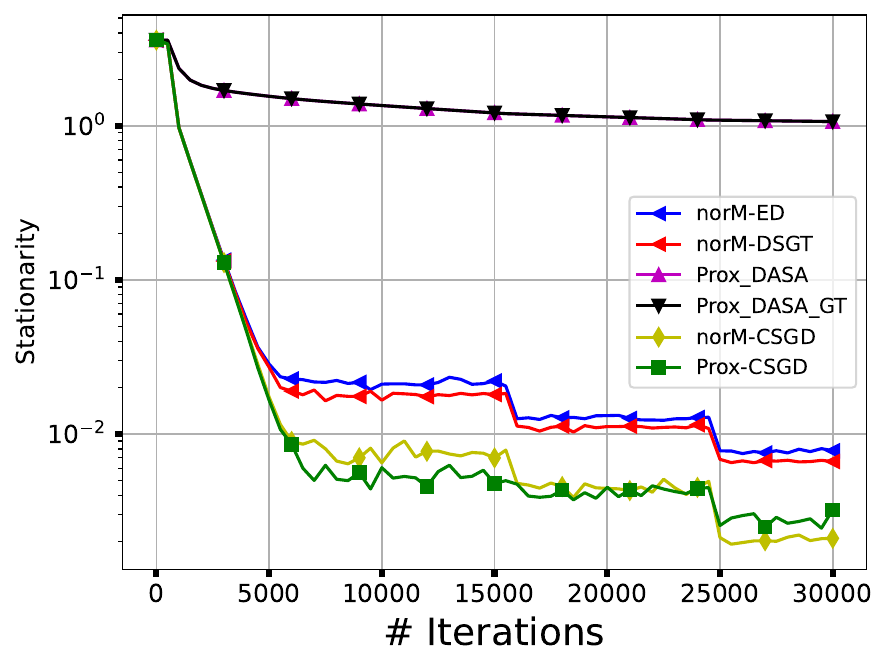}\label{fig:nn_er30}}
    \caption{ Comparison of the proposed methods and baselines for training a neural network on the MNIST dataset. For all the methods, the stepsizes are sequentially set  to $1/70$, $1/140$, and $1/400$ on the ring graph, and $1/10$, $1/20$, and $1/40$ on the Erd\"os-R\'enyi graph ($\text{prob}=0.3$), and the proximal parameter $\gamma$ is set to $0.02$. The results are averaged over $5$ independent runs.}
    \label{fig:nn}
\end{figure}

\section{Conclusions}
\label{sec:con}

{ This paper introduces} a unified algorithmic framework, norM-SABC-2, leveraging the normal map update scheme { to solve the distributed stochastic composite optimization problem over networks}. Within this framework, we propose \normgt~and \normed, both achieving network independent convergence rates 
and demonstrating superior transient times under a general variance condition.
In particular, \normed~matches the transient time of the non-proximal ED algorithm. Such a result is state-of-the-art for solving the considered problem to our knowledge. The \normgt~method shortens the transient time compared to previous distributed proximal gradient tracking based methods. 
Additionally, the employed multi-step analysis is of independent interest which may inspire further advancements in distributed optimization algorithms, particularly when utilizing the random reshuffling strategy for sampling stochastic gradients and dealing with time-varying networks. { Extending the framework to agent-specific regularizers $\varphi_i$ remains an interesting direction for future work.}

\section*{Acknowledgements}
We would like to thank Andre Milzarek from The Chinese University of Hong Kong, Shenzhen (CUHK-Shenzhen) and Junwen Qiu from the National University of Singapore for providing insightful discussions about normal map. 

\begin{appendices}
    \section{norM-CSGD}
    This part presents norM-CSGD (Algorithm~\ref{alg:norm-csgd}).
    \begin{algorithm}[H]
        \begin{algorithmic}[1]
            \STATE Set $z_{0}$, $\alpha$, $\gamma$, and $x_{0} = \proxpi{z_{0}}$ for the server.
            \FOR{$k=0, 1, 2, ..., K-1$}
            \STATE Server sends $x_k$ to all the agent.
            \FOR{Agent $i = 1, 2, ..., n$ in parallel}
            \STATE Acquires $g_{{i}, k} = g_i(x_{k};\xi_{i, k})$ and sends it to the server.\label{line:norm-cspgd_g}
            \ENDFOR
            \STATE Server aggregates $\bar{g}_k = \sumn g_{i,k}/n$ and updates $z_{k + 1} = z_{k} - \alpha\brki{\bar{g}_k + \gamma^{-1}\prti{z_{k} - x_{k}}}$, $x_{k + 1} = \proxpi{z_{k + 1}}$. \label{line:norm-cspgd_zx}
            \ENDFOR
        \end{algorithmic}
        \caption{A Normal Map-based Centralized Stochastic Gradient Descent (norM-CSGD)}
        \label{alg:norm-csgd}
    \end{algorithm}
    \section{Proofs}

    \subsection{Proof of Lemma \ref{lem:re}}
    \label{app:re}
    { By the definition of $\d_k'$}, the update \eqref{eq:norm-abc} can be rewritten as $\z_{k + 1} = \prt{AC - B^2}\z_k - \alpha A\brki{\Fnorb{\z_k} - \Fnorb{\barz_k} + \g_k - \nabla F(\x_k)}- \d'_k$, $\x_{k + 1} = \bproxp{\z_{k + 1}}$, and $\d'_{k + 1} = \d'_k + B^2 \z_k + \alpha A\brki{\Fnorb{\barz_{k + 1}} - \Fnorb{\barz_k}}$,
    { It follows from Assumption \ref{as:abcW} that 
    \small
    \begin{align*}
        &\begin{pmatrix}
            \hU^{\T} \z_{k + 1}\\
            \hLambda_b^{-1} \hU^{\T} \d'_{k + 1}
        \end{pmatrix} = \begin{pmatrix}
            \hLambda_a\hLambda_c - \hLambda_b^2 & - \hLambda_b \\
            \hLambda_b & I_{n-1}
        \end{pmatrix}\begin{pmatrix}
            \hU^{\T} \z_k\\
            \hLambda_b^{-1} \hU^{\T} \d'_k
        \end{pmatrix} \\
        &\quad -\alpha \begin{pmatrix}
            \hLambda_a \hU^{\T}\brki{\Fnorb{\z_k} - \Fnorb{\barz_k} + \Delta_k}\\
            \hLambda_b^{-1} \hLambda_a \hU^{\T}\brki{\Fnorb{\barz_{k + 1}} - \Fnorb{\barz_k}}
        \end{pmatrix}.
    \end{align*}\normalsize
    Multiplying both sides by $V^{-1}$ yields the desired result~\eqref{eq:buk}. Relation \eqref{eq:abc_zbark} follows from the fact that $\Fnor{\bar{z}_k} = \sumn\Fnori{i}{\bar{z}_k}/n$.}
    
    \subsection{Proof of Lemma \ref{lem:Fnor_Lip}}
    \label{app:Fnor_Lip}
    
    We first introduce Lemma \ref{lem:prox_wcvx} (\cite[(8)]{li2023new}), which states the nonexpansive property of $\proxp{\cdot}$ under Assumption \ref{as:phi}.
        \begin{lemma}
            \label{lem:prox_wcvx}
            Let Assumption \ref{as:phi} hold. Set $\gamma\in (0, \rho^{-1})$. We have for all $w,v\in\R^p$ that 
            \begin{enumerate}
                \item $\inproi{w-v, \proxp{w} - \proxp{v}}\geq (1-\gamma\rho)\normi{\proxp{w} - \proxp{v}}^2$, and 
                \item $\normi{w-v}\geq (1-\gamma\rho)\normi{\proxp{w} - \proxp{v}}$.
            \end{enumerate}
        \end{lemma}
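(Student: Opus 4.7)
The plan is to exploit the fact that Assumption \ref{as:phi} together with the choice $\gamma<1/\rho$ turns the proximal subproblem $\min_y\{\varphi(y)+\tfrac{1}{2\gamma}\|y-x\|^2\}$ into a strongly convex optimization: writing $\varphi(y)=\tilde\varphi(y)-\tfrac{\rho}{2}\|y\|^2$ with $\tilde\varphi$ convex, the objective equals $\tilde\varphi(y)+\tfrac{1-\gamma\rho}{2\gamma}\|y\|^2-\tfrac{1}{\gamma}\langle y,x\rangle+\tfrac{1}{2\gamma}\|x\|^2$, which is $(1-\gamma\rho)/\gamma$-strongly convex. Hence $\proxp{\cdot}$ is single-valued, and I can freely work with first-order optimality.

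For part (1), let $u=\proxp{w}$ and $u'=\proxp{v}$. The first-order conditions for the two proximal subproblems read $(w-u)/\gamma\in\partial\varphi(u)$ and $(v-u')/\gamma\in\partial\varphi(u')$. I would then invoke the monotonicity of $\partial\tilde\varphi=\partial\varphi+\rho\,\mathrm{Id}$ (the subdifferential of a convex function), which yields
\[
\Bigl\langle \tfrac{w-u}{\gamma}+\rho u-\tfrac{v-u'}{\gamma}-\rho u',\; u-u'\Bigr\rangle\;\ge\;0.
\]
Expanding the inner product, the cross terms combine into $\tfrac{1}{\gamma}\langle w-v,u-u'\rangle-\tfrac{1}{\gamma}\|u-u'\|^2+\rho\|u-u'\|^2$, and rearrangement delivers $\langle w-v,u-u'\rangle\ge(1-\gamma\rho)\|u-u'\|^2$, which is exactly (1).

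Part (2) follows by Cauchy--Schwarz applied to the left-hand side of (1): $\|w-v\|\cdot\|u-u'\|\ge(1-\gamma\rho)\|u-u'\|^2$, and if $u\neq u'$ I divide through by $\|u-u'\|$ (the case $u=u'$ is trivial since $1-\gamma\rho>0$). The main obstacle is really just being careful that $1-\gamma\rho>0$ (guaranteed by $\gamma<\rho^{-1}$) and verifying that the weakly convex subdifferential calculus gives the clean decomposition $\partial\tilde\varphi(y)=\partial\varphi(y)+\rho y$; this is a standard consequence of the sum rule since $\tfrac{\rho}{2}\|\cdot\|^2$ is smooth. Everything else is algebraic manipulation of the optimality conditions.
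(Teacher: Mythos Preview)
Your proof is correct. The paper itself does not prove Lemma~\ref{lem:prox_wcvx}; it simply quotes the result from \cite[(8)]{li2023new}, so there is nothing in the paper to compare against beyond noting that your argument---optimality conditions for the prox, monotonicity of $\partial\tilde\varphi=\partial\varphi+\rho\,\mathrm{Id}$, then Cauchy--Schwarz---is the standard derivation one would expect behind that citation.
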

        
    Based on Lemma \ref{lem:prox_wcvx}, we have for any $z, z'\in\R^p$ and $L$-smooth $f$ that $\normi{\Fnor{z} - \Fnor{z'}}\leq \gamma^{-1}\normi{\proxp{z} - \proxp{z'}}+\gamma^{-1}\normi{z - z'} +  \normi{\nabla f\prt{\proxp{z}} - \nabla f\prt{\proxp{z'}}}\leq \brki{{\prti{L + 1/\gamma}}/\prti{1-\gamma \rho} + \gamma^{-1}}\normi{z - z'},$
    where we invoked Lemma \ref{lem:prox_wcvx} in the last inequality. Since $f_i$ is $L$-smooth under Assumption \ref{as:smooth}, the above argument also applies to $\Fnori{i}{\cdot}$.
    
    
    
    \subsection{Proof of Lemma \ref{lem:pix2piz}}
    \label{app:pix2piz}
    Recall that $x_{i,k} = \proxp{z_{i,k}}$ for any $k\geq 0$. We have $\normi{\Pi\x_k}^2/n 
                =\sumn\normi{\proxp{z_{i,k}} - \proxp{\bar{z}_k} }^2/n
                - \normi{\proxp{\bar{z}_k} - \sumn\proxp{z_{i,k}}/n}^2
                \leq \normi{\Pi\z_k}^2 / \brki{n(1-\gamma\rho)^2} - \normi{\bar{x}_k - \proxp{\bar{z}_k}}^2,$
        where we invoked Lemma \ref{lem:prox_wcvx} in the last inequality. 

    \subsection{Proof of Lemma \ref{lem:stationarity}}
    \label{app:stationarity}
    
        We denote 
        $\LA := 1 + \prti{1 + \gamma L}/\prti{1-\gamma\rho} = \gamma\Ln$. Then, $\Fnat{\cdot}$ is $\LA$-Lipschitz continuous under Assumptions~\ref{as:smooth}~and~\ref{as:phi}: $\normi{\Fnat{x} - \Fnat{x'}} \leq \normi{x - x'} + \normi{x - x' - \gamma\prt{\nabla f(x) - \nabla f(x')}}/\prti{1-\gamma\rho} \leq \LA\normi{x - x'}.$
         
        Then, relation \eqref{eq:prox-grad} becomes $\sumn\normi{\gamma^{-1}\Fnat{x_{i,k}}}^2/n \leq 2\normi{\Fnor{\bar{z}_k}}^2/(1-\gamma\rho)^2 +{2\LA^2}\sumn\normi{x_{i,k} - \proxp{\bar{z}_k}}^2/\prti{n\gamma^2}$ for any $\bar{z}_k\in\R^p$,
        where the inequality holds by noting that $\Fnat{\proxp{\bar{z}_k}} = \proxp{\bar{z}_k} - \proxp{\bar{z}_k - \gamma\Fnor{\bar{z}_k}}.$
    
        We also have from Lemma \ref{lem:pix2piz} that $\sumn\normi{x_{i,k} - \proxp{\bar{z}_k}}^2/n = \normi{\Pi\x_k}^2/n + \normi{\bar{x}_k - \proxp{\bar{z}_k}}^2\leq \normi{\Pi\z_k}^2/\prti{n(1-\gamma\rho)^2}$.
        Combining the above inequalities leads to the desired result.

    \subsection{Proof of Lemma \ref{lem:cH_descent}}
    \label{app:cH_descent}

    The proof proceeds in three steps and is inspired by those in \cite{milzarek2023convergence,li2023new,ouyang2024trust}. For simplicity, we denote $\ux_k := \proxp{\bar{z}_k}$ for $k \geq 0$. 
    
    In the first two steps, we derive the recursion for $\psi(\ux_k)$ and $\normi{\Fnor{\bar{z}_k}}^2$, respectively. The last step constructs the Lyapunov function $\cH_k$ based on these results.
    
    \textbf{Step I: Obtaining the recursion for $\psi(\ux_{k_2})$}. 
        Due to Assumption \ref{as:phi} that $\varphi$ is $\rho$-weakly convex, we have for any $x, x'\in\dom{\varphi}$ and any $v\in\partial\varphi(x')$ that $\varphi(x)\geq \varphi(x') + \inproi{v, x - x'} - \rho\normi{x - x'}^2/2.$
    
        Note that $\gamma^{-1}\prti{\bar{z}_{k_2} - \ux_{k_2}}\in\partial \varphi(\ux_{k_2})$.
        For any $k_2 > k_1\geq 0$, setting $x' = \ux_{k_2}$, $x = \ux_{k_1}$, and $v = \gamma^{-1}\prti{\bar{z}_{k_2} - \ux_{k_2}}$ in the above inequality leads to $\varphi\prti{\ux_{k_2}} - \varphi\prti{\ux_{k_1}}\leq -\gamma^{-1}\inproi{\bar{z}_{k_2} - \ux_{k_2}, \ux_{k_1} - \ux_{k_2}} + \rho\normi{\ux_{k_1} - \ux_{k_2}}^2/2.$
    
        Based on $\Fnor{\bar{z}_{k_1}} = \nabla f(\ux_{k_1}) + \gamma^{-1}\prti{\bar{z}_{k_1} - \ux_{k_1}}$ and Assumption \ref{as:smooth} that $f$ is $L$-smooth, we have $f\prti{\ux_{k_2}} \leq f\prti{\ux_{k_1}} + \inproi{\Fnor{\bar{z}_{k_1}}, \ux_{k_2} - \ux_{k_1}} - \gamma^{-1}\inproi{\bar{z}_{k_1} - \ux_{k_1}, \ux_{k_2} - \ux_{k_1}}+ L\normi{\ux_{k_2} - \ux_{k_1}}^2/2.$
    
        Combining the above inequalities about $f(\ux_{k_2})$ and $\varphi(\ux_{k_2})$ yields $\psi\prti{\ux_{k_2}} - \psi\prti{\ux_{k_1}} \leq \prti{\prti{\rho + L}/{2} - \gamma^{-1}}\normi{\ux_{k_1} - \ux_{k_2}}^2 + \inproi{\Fnor{\bar{z}_{k_1}}, \ux_{k_{2}} - \ux_{k_1}} + \gamma^{-1}\inproi{\bar{z}_{k_2} - \bar{z}_{k_1}, \ux_{k_2} - \ux_{k_1}}$.
    
        \textbf{Step II: Deriving the recursion for $\normi{\Fnor{\bar{z}_{k_2}}}^2$}. 
        According to the definition of $\Fnor{z}$ in \eqref{eq:Fnor} and the update \eqref{eq:zbar_tw}, we have $\Fnor{\bar{z}_{k_2}} = \prti{1 - {m\alpha}{\gamma^{-1}}}\Fnor{\bar{z}_{k_1}} + \nabla f(\ux_{k_2}) - \nabla f(\ux_{k_1}) - \gamma^{-1}\prti{\ux_{k_2} - \ux_{k_1}} + \gamma^{-1}e_{k_1:k_2}.$ 
        Then, 
        \small
        \begin{equation}
            \label{eq:Fnor_zbar_tw_norm}
            \begin{aligned}
                &\norm{\Fnor{\bar{z}_{k_2}}}^2 
                \leq \prt{1 - \frac{\alpha m}{\gamma}}^2\norm{\Fnor{\bar{z}_{k_1}}}^2 + \gamma^{-2}\norm{e_{k_1:k_2}}^2 \\
                &+ 2\prt{1 - \frac{\alpha m}{\gamma}}\inpro{\Fnor{\bar{z}_{k_1}}, \nabla f\prti{\ux_{k_2}} - \nabla f\prti{\ux_{k_1}} \right.\\
                &\left.- \gamma^{-1}(\ux_{k_2} - \ux_{k_1} -e_{k_1:k_2})} + \prt{L + \gamma^{-1}}^2\norm{\ux_{k_2} - \ux_{k_1}}^2 \\
                &+ \frac{2}{\gamma^2}\inpro{\gamma\brk{\nabla f\prti{\ux_{k_2}} - \nabla f\prti{\ux_{k_1}}} - \prti{\ux_{k_2} - \ux_{k_1}},e_{k_1:k_2}},
            \end{aligned}
        \end{equation}\normalsize
        where we applied the Cauchy-Schwarz inequality and invoked the $L$-smoothness of $f$.
    
        \textbf{Step III: Obtaining the recursion for $\cH_{k_2}$.} 
        Substituting the derived inequalities in \textbf{Step I} and \textbf{Step II} into \eqref{eq:cH_k1k2} and rearranging it yield
        \small
        \begin{equation}
            \label{eq:cH_k1k2_can0}
            \begin{aligned}
                &\cH_{k_2} 
                \leq \brk{\prt{\frac{\rho + L}{2} - \frac{1}{\gamma}} + \frac{\cC_0\gamma}{2}\prt{ L + \frac{1}{\gamma}}^2}\norm{\ux_{k_1} - \ux_{k_2}}^2 \\
                &+ \psi(\ux_{k_1}) - \bpsi  + \frac{\cC_0}{2\gamma}\norm{e_{k_1:k_2}}^2  + \frac{\gamma \cC_0}{2}\prt{1 - \frac{\alpha m}{\gamma}}^2\norm{\Fnor{\bar{z}_{k_1}}}^2\\
                & + \frac{\cC_0}{\gamma}\inpro{\gamma\brk{\nabla f\prti{\ux_{k_2}} - \nabla f\prti{\ux_{k_1}}},e_{k_1:k_2}} + \inpro{\tc, \ux_{k_2} - \ux_{k_1}}\\
                &+ \cC_0\prt{1 - \frac{\alpha m}{\gamma}}\inpro{\Fnor{\bar{z}_{k_1}}, \gamma\brki{\nabla f\prti{\ux_{k_2}} - \nabla f\prti{\ux_{k_1}}} + e_{k_1:k_2}},
            \end{aligned}
        \end{equation}\normalsize
        where $\tc:= \Fnor{\bar{z}_{k_1}} + \gamma^{-1}\prti{\bar{z}_{k_2} - \bar{z}_{k_1}} - \cC_0\prti{1 - {\alpha m}/{\gamma}}\Fnor{\bar{z}_{k_1}} - \gamma^{-1}\cC_0e_{k_1:k_2}$. 
        According to \eqref{eq:zbar_tw}, we have $e_{k_1:k_2} - \alpha m \Fnor{\bar{z}_{k_1}} = \bar{z}_{k_2} - \bar{z}_{k_1}$ and $\Fnor{\bar{z}_{k_1}} = \prti{\bar{z}_{k_1} - \bar{z}_{k_2} + e_{k_1:k_2}}/(\alpha m)$. Then, 
        \begin{equation}
            \label{eq:Fnor2zbar_tw}
            \begin{aligned}
                &\tc= \prt{1 - \cC_0}\Fnor{\bar{z}_{k_1}} + \gamma^{-1}\prt{\bar{z}_{k_2} - \bar{z}_{k_1}} \\
                &- {\cC_0}\gamma^{-1}\brk{e_{k_1:k_2} - \alpha m\Fnor{\bar{z}_{k_1}}}\\
                &= \prt{1 - \cC_0}\prt{\frac{1}{\gamma} - \frac{1}{\alpha m}}\prt{\bar{z}_{k_2} - \bar{z}_{k_1}} + \frac{1-\cC_0}{\alpha m}e_{k_1:k_2}.
            \end{aligned}
        \end{equation}
    
        Recall that $\ux_{k}= \proxp{\bar{z}_{k}}$ { and $\gamma<\rho^{-1}$}. We have from Lemma \ref{lem:prox_wcvx} that 
        \begin{equation}
            \label{eq:inner_prox}
            \begin{aligned}
                -\inpro{\bar{z}_{k_2} - \bar{z}_{k_1}, \ux_{k_2} - \ux_{k_1}}\leq -(1-\gamma\rho)\norm{\ux_{k_2} - \ux_{k_1}}^2.
            \end{aligned}
        \end{equation}
    
        Combining \eqref{eq:Fnor2zbar_tw} and \eqref{eq:inner_prox}, we obtain
        \small
        \begin{equation}
            \label{eq:inner_ux}
            \begin{aligned}
                &\inpro{\tc, \ux_{k_2} - \ux_{k_1}}
                \leq 
                \frac{1-\cC_0}{\alpha m}\norm{e_{k_1:k_2}}^2\\
                & + \brk{\frac{1-\cC_0}{4\alpha m}- \prt{1 - \cC_0}\prt{\frac{1}{\alpha m} - \frac{1}{\gamma}}(1-\gamma\rho)}\norm{\ux_{k_2} - \ux_{k_1}}^2,
            \end{aligned}
        \end{equation}\normalsize
        where we invoked Young's inequality.
        
        For the remaining inner products in \eqref{eq:cH_k1k2_can0}, we have from Young's inequality that 
        \small
        \begin{equation}
            \label{eq:nf_e_inner}
            \begin{aligned}
                &\frac{\cC_0}{\gamma}\inpro{\gamma\brk{\nabla f\prti{\ux_{k_2}} - \nabla f\prti{\ux_{k_1}}},e_{k_1:k_2}}\leq \frac{\cC_0 L}{2}\norm{e_{k_1:k_2}}^2\\
                & + \frac{\cC_0}{2L}\norm{\nabla f\prti{\ux_{k_2}} - \nabla f\prti{\ux_{k_1}}}^2\\
                &\leq \frac{\cC_0 L}{2}\prt{\norm{\ux_{k_1} - \ux_{k_2}}^2 + \norm{e_{k_1:k_2}}^2},\text{ and }
            \end{aligned}
        \end{equation}
        \begin{equation}
            \label{eq:Fnor_nfe_inner}
            \begin{aligned}
                &\inpro{\Fnor{\bar{z}_{k_1}}, \gamma\brk{\nabla f\prti{\ux_{k_2}} - \nabla f\prti{\ux_{k_1}}} + e_{k_1:k_2}}\\
                &\leq \frac{\alpha m}{2}\norm{\Fnor{\bar{z}_{k_1}}}^2 + \frac{\gamma^2 L^2}{\alpha m}\norm{\ux_{k_1} - \ux_{k_2}}^2 + \frac{1}{\alpha m}\norm{e_{k_1:k_2}}^2.
            \end{aligned}
        \end{equation}\normalsize

        Note that $\cC_0= \frac{3-4\gamma\rho}{2\prt{3 - 4\gamma\rho+4\gamma^2 L^2}}$ and $\gamma\leq 1/[5(\rho + L)]$. We have $\cC_0< 1/2$.
        Substituting \eqref{eq:inner_ux}-\eqref{eq:Fnor_nfe_inner} into \eqref{eq:cH_k1k2_can0} and rearranging terms yield the desired result.
    
    \subsection{Proof of Lemma \ref{lem:ek1k2}}
    \label{app:ek1k2}
    
    The proof is divided into five steps. The first step outlines the roadmap, identifying three key components necessary to establish the desired result (see \eqref{eq:ek1k2_s2}). The subsequent three steps address these components individually. Specifically, \textbf{Step II} presents bounds for two variance terms (see \eqref{eq:sk1p_ub} and \eqref{eq:bsk1p_split}); \textbf{Step III} and \textbf{Step IV} derive bounds for the summation terms (see \eqref{eq:Fnor_L_zbar_sum} and \eqref{eq:cons_tw_norm_sum}, respectively). Finally, \textbf{Step~V} combines these components to obtain the desired result.
    
    \textbf{Step I: Exploring components that bound $\condEi{\normi{e_{k_1:k_2}}^2}{\cF_{k_1}}$.} We first explore the ingredients that bound the term $\condEi{\normi{e_{k_1:k_2}}^2}{\cF_{k_1}}$. 
    { Denote $s_{k_1:k_2}^2:= \normi{\sum_{q=k_1}^{k_2-1} \alpha \bar{\delta}_{q}}^2$ with $\bar{\delta}_p := \sumn\delta_{i,p}/n$.} Noting that $k_2 - k_1 = m$, we have from \eqref{eq:ek1k2_def} and { Lemma~\ref{lem:Fnor_Lip} that
    \small
    \begin{equation}
        \label{eq:ek1k2_s2}
        \begin{aligned}
            \frac{\normi{e_{k_1:k_2}}^2}{3} &\leq \alpha^2 m\Ln^2\sum_{p=k_1}^{k_2-1}\brk{\normi{\bar{z}_{k_1}-\bar{z}_p}^2 + \frac{\normi{\Pi\z_p}^2}{n}} + s_{k_1:k_2}^2.
        \end{aligned}
    \end{equation}\normalsize
    }
    
    \textbf{Step II: Bounding two variance terms.} In this step, we bound the two variance terms $s_{k_1:p}^2$ and $\s_{k_1:p}^2:=\normi{\alpha\sum_{q=k_1}^{p - 1}\Gamma^{p - 1-q} { D_1}\Delta_q}^2$ for any integer $k_1< p\leq k_2$. To deal with the consensus error $\normi{\Pi\z_p}^2$ that appears in the second term in \eqref{eq:ek1k2_s2}, we bound $\normi{\bu_p}^2$ due to $\normi{\Pi\z_p}^2\leq \normi{V}^2\normi{\bu_p}^2$, $\forall k_1 < p\leq k_2$. Such a step requires bounding $\condEi{\s_{k_1:p}^2}{\cF_{k_1}}$ according to \eqref{eq:cons_tw}. 
    
        
    
    { Applying the tower property }and noting that the independent stochastic gradients across agents (Assumption \ref{as:abc}) yields 
    \small
    \begin{equation}
        \label{eq:sk1p_s1}
        \begin{aligned}
            &\condE{s_{k_1:p}^2}{\cF_{k_1}} = \frac{\alpha^2}{n^2}\sum_{q = k_1}^{p-1}\sumn\condE{\condE{\norm{\delta_{i,q}}^2}{\cF_{q}}}{\cF_{k_1}}\\
            &\leq \frac{\alpha^2}{n^2}\sum_{q = k_1}^{p-1}\sumn \condE{\C\brk{f_i(x_{i,q}) - f_i^*} + \sigma^2 }{\cF_{k_1}}.
        \end{aligned}
    \end{equation}\normalsize
    
    According to Assumption \ref{as:smooth} that each $f_i$ is $L$-smooth, we have from the descent lemma and $\normi{\nabla f_i(x)}^2\leq 2L\prti{f_i(x) - f_i^*}$ that $f_i(x_{i,q}) - f_i^* \leq 2\brki{f_i(\bar{x}_q) - f_i^*} + L\normi{x_{i, q} - \bar{x}_q}^2$, for any $q>0$.
    Taking the average on both sides of it among $i=1,2,\ldots,n$, invoking Lemma \ref{lem:fbarx2fproxz} and \eqref{eq:consx2consz_org} lead to 
    \small
    \begin{equation}
        \label{eq:avgfxip2fproxz}
        \begin{aligned}
            \frac{1}{n}\sumn\brk{f_i(x_{i,q}) - f_i^* }
            &\leq 4\brk{\psi\prt{\ux_q} - \bpsi} + \frac{4L}{n}\norm{\Pi\z_q}^2 + 2\sigfn.
        \end{aligned}
    \end{equation}\normalsize
    
    
    Recall that $\ux_k = \proxp{\bar{z}_k}$ for any $k\geq 0$. 
    { Similar to the derivations in Step I in Appendix \ref{app:cH_descent},} we have for $k_2\geq q>k_1$ that 
    \small
    \begin{align}
        \psi(\ux_{q}) - \psi(\ux_{k_1}) 
        &\leq \frac{1}{\gamma}\norm{\bar{z}_{k_1} - \bar{z}_q}^2 + \frac{\gamma}{2}\norm{\Fnor{\bar{z}_{k_1}}}^2,\label{eq:psi2k1}
    \end{align}
    \normalsize
    where we invoked Lemma \ref{lem:prox_wcvx} and applied $\gamma\leq (2-\sqrt{2})/(2\rho)$.
    Substituting \eqref{eq:avgfxip2fproxz} and \eqref{eq:psi2k1} into \eqref{eq:sk1p_s1}, we have for any integer $k_1 <p\leq k_2$ that
    \small
    \begin{equation}
        \label{eq:sk1p_ub}
        \begin{aligned}
            &\condE{s_{k_1:p}^2}{\cF_{k_1}}
            \leq \frac{4\alpha^2\C }{n\gamma}\sum_{q=k_1}^{p-1}\condE{\norm{\bar{z}_{k_1} - \bar{z}_q}^2}{\cF_{k_1}} \\
            &\frac{4\alpha^2 \C\prt{p-k_1}}{n}\brk{\psi\prti{\ux_{k_1}} - \bpsi} + \frac{4\alpha^2\C L}{n^2}\sum_{q=k_1}^{p-1}\condE{\norm{\Pi\z_q}^2}{\cF_{k_1}}\\
            & + \frac{2\alpha^2\C \gamma (p - k_1)}{n}\norm{\Fnor{\bar{z}_{k_1}}}^2 + \frac{\alpha^2 (p-k_1)\prt{2\C\sigfn + \sigma^2}}{n}.
        \end{aligned}
    \end{equation}\normalsize
    
    We next bound the term $\condEi{\s_{k_1:p}^2}{\cF_{k_1}}$. Similar to the derivations for {$\condEi{s_{k_1:p}^2}{\cF_{k_1}}$ in \eqref{eq:sk1p_s1}-\eqref{eq:sk1p_ub}}, we have 
    \small
    \begin{equation}
        \label{eq:bsk1p_split}
        \begin{aligned}
            &\condE{\s_{k_1:p}^2}{\cF_{k_1}} 
            \leq { \alpha^2\norm{D_1}^2\sum_{q=k_1}^{p - 1}\beta^{2(p - 1 - q)}\sumn\condE{\norm{\delta_{i,q}}^2}{\cF_{k_1}}}\\
            &\leq 4\alpha^2 \C L {\norm{D_1}^2} \sum_{q=k_1}^{p-1}\beta^{2(p-1-q)}\condE{\norm{\Pi \z_q}^2}{\cF_{k_1}}\\
            & + 4\alpha^2 n\C {\norm{D_1}^2}\beta_{0:(p-k_1)}^2\brk{\psi\prt{\ux_{k_1}}-\bpsi + \frac{\gamma}{2}\norm{\Fnor{\bar{z}_{k_1}}}^2}\\
            & + 4\alpha^2n\C\gamma^{-1} {\norm{D_1}^2}\sum_{q=k_1}^{p-1}\beta^{2(p-1-q)}\condE{\norm{\bar{z}_{k_1} - \bar{z}_q}^2}{\cF_{k_1}}\\
            & + \alpha^2n{\norm{D_1}^2}\beta_{0:(p-k_1)}^2\prt{2\C\sigfn + \sigma^2}.
        \end{aligned}
    \end{equation}\normalsize

    \textbf{Step III: Bounding  $\sum_{p=k_1}^{k_2 - 1}\condEi{\normi{\bar{z}_{k_1} - \bar{z}_p}^2}{\cF_{k_1}}$}. 
    
    Note that \eqref{eq:ek1k2_s2} also holds for any integer $k_1 <p\leq k_2$. Then, according to $\bar{z}_{p} = \bar{z}_{k_1} - \alpha (p - k_1)\Fnor{\bar{z}_{k_1}} + e_{k_1:p}$, we have 
    \small
    {
    \begin{align}
        &\norm{\bar{z}_{k_1} - \bar{z}_p}^2 
        \leq 2(p - k_1)^2\alpha^2 \norm{\Fnor{\bar{z}_{k_1}}}^2 + 6s_{k_1:p}^2 \nonumber\\
        &+ 6\alpha^2(p-k_1)\Ln^2\sum_{q=k_1}^{p-1}\brk{\norm{{\bar{z}_{k_1}} - {\bar{z}_q}}^2 + \frac{\norm{\Pi \z_q}^2}{n}}.\label{eq:Fnor_L_zbar}
    \end{align}\normalsize
    }
    
    Summing on both sides of \eqref{eq:Fnor_L_zbar} from $p=k_1$ to $k_2 - 1$ leads to 
    {
    \small
    \begin{equation}
        \label{eq:Fnor_L_zbar_sum}
        \begin{aligned}
            &\sum_{p=k_1}^{k_2 - 1}\norm{{\bar{z}_{k_1}} -{\bar{z}_p}}^2\leq \alpha^2 m^3\norm{\Fnor{\bar{z}_{k_1}}}^2 +  6\sum_{p=k_1}^{k_2 -1}s_{k_1:p}^2  \\
            &+ 3\alpha^2m^2\Ln^2\sum_{p=k_1}^{k_2 - 1} \brk{\norm{{\bar{z}_{k_1}} - {\bar{z}_p}}^2 + \frac{\norm{\Pi \z_p}^2}{n}}.
        \end{aligned}
    \end{equation}\normalsize
    }
    
    \textbf{Step IV: Bounding $\sum_{p=k_1}^{k_2 - 1}\condEi{\normi{\bu_p}^2}{\cF_{k_1}}/n$.}
    
    Noting $\bar{z}_{q + 1} - \bar{z}_q = -\alpha \Fnor{\bar{z}_{k_1}} + \alpha\brki{\Fnor{\bar{z}_{k_1}} - \Fnor{\bar{z}_q}} - \alpha \sumn\brki{\Fnori{i}{z_{i,q}} - {\Fnori{i}{\bar{z}_q}}}/n - \alpha \bar{\delta}_q$ { and invoking Lemma~\ref{lem:Fnor_Lip} leads to 
    \begin{equation}
        \label{eq:zq_diff}
        \begin{aligned}
            \norm{\bar{z}_{q + 1} - \bar{z}_q}^2 &\leq 4\alpha^2\norm{\Fnor{\bar{z}_{k_1}}}^2 + 4\alpha^2\Ln^2\norm{\bar{z}_{k_1} - \bar{z}_q}^2\\
            &\quad + \frac{4\alpha^2 \Ln^2}{n}\norm{\Pi \z_q}^2 + 4\alpha^2\norm{\bar{\delta}_q}^2.
        \end{aligned}
    \end{equation}
    }

    Then, for any $k_1<p\leq k_2$, we have from \eqref{eq:cons_tw} { and Lemma \ref{lem:Fnor_Lip}} that  
    \small
    \begin{equation}
        \label{eq:cons_tw_norm}
        \begin{aligned}
            &\condE{\norm{\bu_{p}}^2}{\cF_{k_1}} 
            \leq  4\beta^{2(p - k_1)}\norm{\bu_{k_1}}^2 + 4\condE{\s_{k_1:p}^2}{\cF_{k_1}}\\
            &+ 4\alpha^2 \prt{p - k_1}\Ln^2{\norm{D_1}^2}\sum_{q=k_1}^{p-1}\beta^{2(p-1-q)} \condE{\norm{\Pi\z_q}^2}{\cF_{k_1}}\\
            & + { 4\alpha^2 n\prt{p-k_1} \norm{D_2}^2 \Ln^2 \sum_{q=k_1}^{p-1}\beta^{2(p-1-q)} \norm{\bar{z}_{q + 1} - \bar{z}_q}^2}.
        \end{aligned}
    \end{equation}\normalsize
    
    Note that $\beta<1$. Then, we have $\sum_{p=k_1}^{k_2 - 1}\beta_{0:(p-k_1)}^2\leq m/(1-\beta)$ and $\sum_{p=k_1}^{k_2 - 1}\prt{p-k_1}\sum_{q=k_1}^{p-1}\beta^{2(p-1-q)}a_q\leq \sum_{p=k_1}^{k_2 - 1}\prt{p-k_1}\sum_{q=k_1}^{k_2-1}a_q \leq \frac{m^2}{2}\sum_{q=k_1}^{k_2-1}a_q$, for any $a_q\geq 0$.
    
    Summing on both sides of \eqref{eq:cons_tw_norm} from $p=k_1 + 1$ to $k_2-1$ and invoking the above summations leads to 
    \small
    \begin{equation}
        \label{eq:cons_tw_norm_sum}
        \begin{aligned}
            &\sum_{p=k_1}^{k_2 - 1}\condE{\norm{\bu_p}^2}{\cF_{k_1}} 
            = \norm{\bu_{k_1}}^2 + \sum_{p=k_1 + 1}^{k_2 - 1}\condE{\norm{\bu_p}^2}{\cF_{k_1}}\\
            &\leq 4\beta_{0:m}^2\norm{\bu_{k_1}}^2 +  4\sum_{p=k_1 + 1}^{k_2 - 1}\condE{\s_{k_1:p}^2}{\cF_{k_1}} \\
            & + 8\alpha^4m^2n {\norm{D_2}^2}\Ln^2\sum_{q=k_1}^{k_2-1}\prt{\norm{\Fnor{\bar{z}_{k_1}}}^2  \right.\\
            &\left.+ \Ln^2\condE{\norm{\bar{z}_q - \bar{z}_{k_1}}^2}{\cF_{k_1}}} \\
            &{ + 16\alpha^4 n\norm{D_2}^2\Ln^2 \sum_{p=k_1}^{k_2 - 1}(p-k_1)\sum_{q=k_1}^{p-1}\beta^{2(p-1-q)}\condE{\norm{\bar{\delta}_q}^2}{\cF_{k_1}}} \\
            &+ 2\alpha^2m^2\Ln^2\prt{\norm{D_1}^2 + 4\alpha^2\Ln^2\norm{D_2}^2 }\sum_{q=k_1}^{k_2 -1}\condE{\norm{\Pi\z_q}^2}{\cF_{k_1}}.
        \end{aligned}
    \end{equation}\normalsize
    
    
    
    \textbf{Step V: Bounding $\condEi{\normi{e_{k_1:k_2}}^2}{\cF_{k_1}}$.} In this step, we combine the ingredients derived in \textbf{Step II-Step IV} to obtain the desired result.
    
    Note that $\normi{\Pi\z_p}^2\leq \normi{V}^2\normi{\bu_p}^2$ for any $p>0$. Combining \eqref{eq:cons_tw_norm_sum} and \eqref{eq:Fnor_L_zbar_sum}, { and letting $\alpha\leq 1/(4\normi{D_2}\Ln)$} leads to 
    \small
    \begin{equation}
        \label{eq:sumk1k2}
        \begin{aligned}
            &\condE{\cM_{k_1:k_2}}{\cF_{k_1}}\leq \frac{4\beta_{0:m}^2}{n}\norm{\bu_{k_1}}^2  + \frac{4}{n}\sum_{p=k_1}^{k_2 - 1}\condE{\s_{k_1:p}^2}{\cF_{k_1}} \\
            &+6\sum_{p=k_1}^{k_2 - 1}\condE{s_{k_1:p}^2}{\cF_{k_1}} +  \frac{3}{2}\alpha^2 m^3\norm{\Fnor{\bar{z}_{k_1}}}^2\\ 
            & + 4\alpha^2m^2\norm{V}^{2}\Ln^2\prt{\norm{D_1}^2 + 1}\sum_{p=k_1}^{k_2 - 1}\frac{1}{n}\condE{\norm{\bu_p}^2}{\cF_{k_1}}  \\
            &+ 4\alpha^2m^2\Ln^2 \sum_{p=k_1}^{k_2 - 1}\condE{\norm{\bar{z}_{k_1} - \bar{z}_p}^2}{\cF_{k_1}}\\
            & { + 16\alpha^4 \norm{D_2}^2\Ln^2 \sum_{p=k_1}^{k_2 - 1}(p-k_1)\sum_{q=k_1}^{p-1}\beta^{2(p-1-q)}\condE{\norm{\bar{\delta}_q}^2}{\cF_{k_1}}}.
        \end{aligned}
    \end{equation}\normalsize
    
    { Similar to the derivations in \eqref{eq:sk1p_s1}-\eqref{eq:sk1p_ub}, we have 
    \small
    \begin{equation}
        \label{eq:wg2psi}
        \begin{aligned}
            &\sum_{q=k_1}^{p - 1}\beta^{2(p- 1-q)}\condE{\norm{\bar{\delta}_q}^2}{\cF_{k_1}} \leq \frac{4\C \beta_{0:p}^2}{n}\brk{\psi\prt{\ux_{k_1}} - \bpsi}  \\
            &+ \frac{2\C \gamma \beta_{0:p}^2}{n}\norm{\Fnor{\bar{z}_{k_1}}}^2 + \frac{4\C}{n\gamma} \sum_{q=k_1}^{p-1}\condE{\norm{\bar{z}_{k_1} - \bar{z}_q}^2}{\cF_{k_1}} \\
            &+ \frac{4\C L}{n^2} \sum_{q=k_1}^{p-1}\condE{\norm{\Pi\z_q}^2}{\cF_{k_1}} + \frac{\prt{2\C\sigfn + \sigma^2} \beta_{0:p}^2}{n}.
        \end{aligned}
    \end{equation}\normalsize
    }
    
    Substituting \eqref{eq:sk1p_ub}, \eqref{eq:bsk1p_split}, and \eqref{eq:wg2psi} into \eqref{eq:sumk1k2}, { noting $\beta_{0:p}^2\leq \max\crki{1/(1-\beta^2), p}$, and letting $\alpha\leq 1/(16m\Ln\sqrt{\normi{D_1}^2 + 1})$, $\gamma\leq 1/[5(\C + L + \rho)]$} yields 
    \small
    \begin{align}
        &\tc_1 \sum_{p=k_1}^{k_2 - 1}\condE{\norm{\bar{z}_{k_1} - \bar{z}_p}^2}{\cF_{k_1}} + \tc_2\sum_{p=k_1}^{k_2 - 1}\frac{1}{n}\condE{\norm{\bu_p}^2}{\cF_{k_1}} \nonumber\\
        &\leq \frac{4\beta_{0:m}^2}{n}\norm{\bu_{k_1}}^2 + \alpha^2m^3\prt{ 2 + \norm{D_1}^2}\norm{\Fnor{\bar{z}_{k_1}}}^2\nonumber\\
        & + 4\alpha^2m \brk{\norm{D_1}^2\beta_{0:m}^2 + \frac{3m}{4n} + \frac{2\alpha^2m\normi{D_2}^2 \Ln^2}{n(1-\beta^2)}} \nonumber\\
        &\cdot\crk{4\C \brk{\psi\prt{\ux_{k_1}} - \bpsi} + 2\C\sigfn + \sigma^2},\label{eq:sumk1k2_AA}
    \end{align}\normalsize
    { where 
    $\tc_1 := 1 - 16\alpha^2mC_0\gamma^{-1}\normi{D_1}^2 - 24\alpha^2 mC_0n^{-1}\gamma^{-1}- 4\alpha^2m^2\Ln^2 - 32\alpha^4m^2\normi{D_2}^2\Ln^2C_0 n^{-1}\gamma^{-1}$ and $\tc_2:= 1 - 16\alpha^2 m C_0 L\normi{D_1}^2\normi{V}^2 - 24\alpha^2 m C_0 L \normi{V}^2n^{-1} - 32\alpha^4m^2\Ln^2 C_0 L \normi{D_2}^2\normi{V}^2n^{-1} - 4\alpha^2m^2\normi{V}^2\Ln^2\prti{\normi{D_1}^2 + 1}.$
    
    }

    It suffices to have {$\gamma\leq 1/[5(\C + L + \rho)]$ and $\alpha\leq \min\crki{1/(16m\Ln\sqrt{\normi{D_1}^2 + 1}), \sqrt{1-\beta^2}/(4\normi{D_2}\Ln)}$} such that $\tc_1\geq 1/2$, $\tc_2\geq 1/2$, { and $\alpha^2 m\normi{D_2}^2\Ln^2\leq m(1-\beta^2)/16$, which} yields the desired result \eqref{eq:sumk1k2_ub}.
    
    Note that $\gamma\leq 1/(2L)$. 
    Taking conditional expectation on both sides of \eqref{eq:ek1k2_s2} and substituting \eqref{eq:sk1p_ub} into it leads to 
    \small
    \begin{equation}
        \label{eq:Eek1k2}
        \begin{aligned}
            &\condE{\norm{e_{k_1:k_2}}^2}{\cF_{k_1}}\leq 3\alpha^2 m \ccD_1^2{\condE{\cM_{k_1:k_2}}{\cF_{k_1}}} \\
            & + \frac{12\alpha^2 m \C}{n}\brk{\psi\prt{\ux_{k_1}} - \bpsi}   + \frac{6\alpha^2 m \C\gamma}{n} \norm{\Fnor{\bar{z}_{k_1}}}^2\\
            &  + \frac{3\alpha^2 m\prt{2\C\sigfn + \sigma^2}}{n}.
        \end{aligned}
    \end{equation}\normalsize
    
    Substituting \eqref{eq:sumk1k2_ub} into \eqref{eq:Eek1k2} leads to the desired result \eqref{eq:Eek1k2_ub}.
    

    \subsection{Proof of Lemma \ref{lem:cL}}
    \label{app:cL}
    
    Note that $\brki{\psi\prt{\ux_{k_1}} - \bpsi}\leq \cH_{k_1}$ for any $k_1\geq 0$. Taking the conditional expectation on \eqref{eq:cH_descent} and substituting \eqref{eq:Eek1k2_ub} yields
    \small
    \begin{equation}
        \label{eq:cH_ek}
        \begin{aligned}
            &\condE{\cH_{k_2}}{\cF_{k_1}}\leq \prt{ 1 + 12\alpha \C H_1}\cH_{k_1} + { 3\alpha H_1}\prt{2\C\sigfn + \sigma^2} \\
            & - { \alpha m\brk{\frac{\cC_0}{2} - \frac{6\C\gamma}{mn} - 6\alpha^2m^2\ccD_1^2\prt{\norm{D_1}^2 + 2}}}\norm{\Fnor{\bar{z}_{k_1}}}^2\\
            & + \frac{24\alpha {\ccD_1^2}\beta_{0:m}^2\norm{\bu_{k_1}}^2}{n}.
        \end{aligned}
    \end{equation}\normalsize
    
    Substituting \eqref{eq:sumk1k2_ub} into \eqref{eq:Euk_re} and noting $1-\beta^{2m} = \prti{1-\beta^2}\beta_{0:m}^2$ leads to 
    
    \small
    \begin{equation}
        \label{eq:Euk_ub}
        \begin{aligned}
            &\condE{\norm{\bu_{k_2}}^2}{\cF_{k_1}}\leq \prt{\frac{1 + \beta^{2m}}{2} + {\frac{64\alpha^2m \ccD_2^2 }{1-\beta^{2}}}}\norm{\bu_{k_1}}^2 \\
            &+  4\alpha^2n\beta_{0:m}^2 \brk{ { H_2} + \frac{4\alpha^2m^4 {\ccD_2^2(\normi{D_1}^2 + 2)}}{\prt{1-\beta^{2m}}\beta_{0:m}^2}} \norm{\Fnor{\bar{z}_{k_1}}}^2\\
            &+ { 2\alpha^2n\beta_{0:m}^2} \brk{{ H_2 + \frac{32\alpha^2m^2\ccD_2^2 \normi{D_1}^2}{1-\beta^{2m}} + \frac{32\alpha^2 m^3\ccD_2^2}{n\beta^2_{0:m}(1-\beta^{2m})}}}\\
            &\cdot \crk{4\C \brk{\psi\prt{\proxp{\bar{z}_{k_1}}} - \bpsi} + 2\C \sigfn + \sigma^2}.
        \end{aligned}
    \end{equation}\normalsize

    { 
    Substituting \eqref{eq:cH_ek} and \eqref{eq:Euk_ub} into \eqref{eq:cLk1k2} (setting $k=k_2$) leads to
    \begin{equation}
        \label{eq:cL_tcs}
        \begin{aligned}
            &\condE{\cL_{k_2}}{\cF_{k_1}} \leq \prt{1 + 4\tc_5}\cL_{k_1}  + \tc_3\norm{\bu_{k_1}}^2\\
            &-\alpha m\tc_4\norm{\Fnor{\bar{z}_{k_1}}}^2+\tc_5 \prt{2\C\sigfn + \sigma^2},
        \end{aligned}
    \end{equation}
    where 
    \small
    \begin{align*}
        &\tc_3:=\frac{24\alpha \ccD_1^2\beta_{0:m}^2}{n} + \prt{\frac{1 + \beta^{2m}}{2} + \frac{64\alpha^2m \ccD_2^2 }{1-\beta^{2}}} \frac{50\alpha\ccD_1^2}{n\prti{1-\beta^2}},\\
        &\tc_4:= \frac{\cC_0}{2} - \frac{6\C\gamma}{mn} - \frac{800\alpha^4 m^3\ccD_1^2\ccD_2^2\prti{\normi{D_1}^2 + 2}}{(1-\beta^2)(1-\beta^{2m})}\\
        &- 6\alpha^2m^2\ccD_1^2\prt{\norm{D_1}^2 + 2}  - \frac{200\alpha^2 \ccD_1^2 \normi{D_1}^2 \beta^2_{0:m}}{m(1-\beta^2)}\\
        &- \frac{800 \alpha^4\Ln^2\ccD_1^2\normi{D_2}^2}{(1-\beta^2)^2},\qquad \tc_5:=3\alpha H_1 + \frac{100\alpha^3\beta_{0:m}^2\ccD_1^2}{1-\beta^2}\\
        &\cdot\brk{H_2 + \frac{32\alpha^2m^2\ccD_2^2 \normi{D_1}^2}{1-\beta^{2m}} + \frac{96\alpha^2 m^3\ccD_2^2}{n\beta^2_{0:m}(1-\beta^{2m})}}.
    \end{align*}\normalsize

    We now simplify coefficients $\tc_3$--$\tc_5$. Invoking the conditions on $\alpha$ and $\gamma$, we obtain $\tc_3\leq 50\alpha \ccD_1^2/[n(1-\beta^2)]$, $-\alpha m\tc_4\leq \alpha m /9$, and $\tc_5  \leq 5\alpha /n + \cA(\alpha)$. This finishes the proof. 
    }
    

    \subsection{Proof of Lemma \ref{lem:cL_bounded}}
    \label{app:cL_bounded}

    Based on the definition of $\crki{\tk_j}$ in \eqref{eq:tkj}, we have $\tk_{j+1}-\tk_j = m$ for any $j\geq 0$. Therefore, we can apply the recursion \eqref{eq:cL} to $\crki{\tk_j}$. Taking the full expectation on \eqref{eq:cL} yields that for any $(T + 1)\geq j>0$, we have
    {
    \begin{align*}
        &\E\brk{\cL_{\tk_{j}}} 
        \leq \exp\crk{\brk{\frac{20\alpha \C}{n} + 4\cA(\alpha)\C}(T + 1)}\crk{\cL_0  \right.\\
        &\left. + \brk{\frac{5\alpha}{n} + \cA(\alpha)}(T + 1)\prt{2\C\sigfn + \sigma^2}},
    \end{align*}
    }
    where we invoked the relations $\prti{1 + x}^t \leq \exp\prti{x t} \leq \exp\brki{x(T + 1)}$ for any $0\leq t \leq (T + 1)$ and $x\geq 0$.
    
    Note that $T + 1 = (K-Q)/m + 1\leq K/m + 1\leq 2K/m$. Invoking the conditions for $\alpha$, $\gamma$, and $m$ leads to the desired result.
    
    \subsection{Proof of Theorem \ref{thm:abc}}
    \label{app:abc}
    
    The proof is divided into three steps based on \eqref{eq:sum_dist}.
    \textbf{Step~I} applies the uniform bound on $\E\brki{\cL_{\tk_t}}$ to simplify the inner summation in the second term of \eqref{eq:sum_dist}, as shown in \eqref{eq:Fnor_inter_sum}. Then, the remaining task is to bound the sums $\sum_{t=0}^T \E\brki{\normi{\bu_{\tk_t}}^2}$ and $\sum_{t=0}^T \E\brki{\normi{\Fnor{\bar{z}_{\tk_t}}}^2}$. These two terms are addressed in \textbf{Step II} and \textbf{Step III}, as in \eqref{eq:Euk_sum_ub} and \eqref{eq:cL_sum_s1}, respectively.
    
    \textbf{Step I: Bounding $\sum_{t=0}^T\E\brki{\cM_{\tk_t:\tk_{t + 1}}}/K$.}
    
    Invoking \eqref{eq:sumk1k2_ub}, \eqref{eq:cLkj_ub}, and taking the full expectation yields
    {
    \small
    \begin{equation}
        \label{eq:Fnor_inter_sum}
        \begin{aligned}
            &\frac{4\Ln^2\prt{\norm{V}^2 + 1}}{K}\sum_{t=0}^T\E\brk{\cM_{\tk_t:\tk_{t+1}}} \leq \frac{m}{2K}\sum_{t=0}^T\E\brk{\norm{\Fnor{\bar{z}_{\tk_t}}}^2}\\
            & + 128\alpha^2 \Ln^2\prt{\norm{V}^2 + 1}\prt{\norm{D_1}^2\beta_{0:m}^2 + \frac{m}{n}}\tilde{\sigma}^2\\
            &+ \frac{{ 32}\beta_{0:m}^2\Ln^2\prt{\norm{V}^2 + 1}}{nK}\sum_{t=0}^T\E\brk{\norm{\bu_{\tk_t}}^2},
        \end{aligned}
    \end{equation}\normalsize
    }
    where we let {$\alpha\leq 1/(80m\ccD_2)$}.

    \textbf{Step II: Bounding the term $\sum_{t=0}^T \E\brki{\normi{\bu_{\tk_t}}^2}$.}
    In light of the definition of $\hat{\cL}$ in \eqref{eq:cLkj_ub}, the choice of $\alpha$, and $(T + 1)/K\leq 2/m$ for any $T\geq 1$, we sum on both sides of \eqref{eq:Euk_ub} from $t = 0$ to $t= T$ to obtain
    {
    \small
    \begin{equation}
        \label{eq:Euk_sum_ub}
        \begin{aligned}
            &\frac{1}{nK}\sum_{t=0}^T\E\brk{\norm{\bu_{\tk_t}}^2} \leq \frac{24\alpha^2}{m(1-\beta^2)} \brk{H_2 + \frac{32\alpha^2m^2\ccD_2^2 \norm{D_1}^2}{1-\beta^{2m}} \right.\\
            &\left. + \frac{32\alpha^2 m^3\ccD_2^2}{n\beta^2_{0:m}(1-\beta^{2m})}}\tilde{\sigma}^2+  
            \frac{12\alpha^2}{(1-\beta^{2})}\brk{\frac{4\alpha^2m^4 \ccD_2^2(\normi{D_1}^2 + 2)}{\prti{1-\beta^{2m}}\beta_{0:m}^2}\right.\\
            &\left. + H_2} \frac{1}{K}\sum_{t=0}^{T}\E\brk{\norm{\Fnor{\bar{z}_{\tk_t}}}^2}.
        \end{aligned}
    \end{equation}\normalsize
    }
    
    \textbf{Step III: Bounding the outer summation in \eqref{eq:sum_norm}.}
    According to \eqref{eq:cL} and \eqref{eq:cLkj_ub}, we have for any $T\geq t\geq 0$ that 
    {
    \small
    \begin{equation}
        \label{eq:cL_tk}
        \begin{aligned}
            \E\brki{\cL_{\tk_{t + 1}}}&\leq \E\brki{\cL_{\tk_t}}  - \frac{\alpha m}{9}\E\brki{\normi{\Fnor{\bar{z}_{\tk_t}}}^2} + \brk{\frac{5\alpha }{n} + \cA(\alpha)}\tilde{\sigma}^2.
        \end{aligned}
    \end{equation}\normalsize
    }
    Summing on both sides of \eqref{eq:cL_tk} from $t = 0$ to $t = T$ and noting that $(T + 1)/K\leq 2/m$ for any $T\geq 1$, we obtain
    {
    \small
    \begin{equation}
        \label{eq:cL_sum_s1}
        \begin{aligned}
            &\frac{m}{K}\sum_{t=0}^{T}\E\brk{\norm{\Fnor{\bar{z}_{\tk_t}}}^2} 
            \leq \frac{9\cL_{0}}{\alpha K}+ \brk{\frac{90 }{nm} + \frac{18\cA(\alpha)}{\alpha m}}\tilde{\sigma}^2.
        \end{aligned}   
    \end{equation}\normalsize
    }
    
    Substituting \eqref{eq:Fnor_inter_sum}, \eqref{eq:Euk_sum_ub}, \eqref{eq:cL_sum_s1} into \eqref{eq:sum_dist} leads to
    {
    \small
    \begin{equation}
        \label{eq:Fnat_avg}
        \begin{aligned}
            &\frac{1}{nK}\sum_{k=0}^{K-1}\sumn \E\brk{\norm{\frac{\Fnat{x_{i,k}}}{\gamma}}^2}
            \leq \brk{\frac{(7+192/80^2) \normi{D_1}^2}{1-\beta^2}  \right.\\
            &\left. + \frac{m}{n} +\frac{24\alpha^2 \normi{D_2}^2\Ln^2}{n(1-\beta^2)^2} +  \frac{192\alpha^2 m^2\ccD_2^2}{n(1-\beta^2)(1-\beta^{2m})}}\\
            &\cdot \frac{128\alpha^2\Ln^2 (\normi{V}^2 + 1)\tilde{\sigma}^2}{(1-\gamma\rho)^2} + \frac{5m}{(1-\gamma\rho)^2 K}\sum_{t=0}^T\E\brk{\norm{\Fnor{\bar{z}_{\tk_t}}}^2}\\
            &\leq \frac{71\cL_0}{\alpha K} + \frac{704\tilde{\sigma}^2}{mn} + \frac{141\cA(\alpha)\tilde{\sigma}}{\alpha m} + \brk{\frac{(7+192/80^2) \normi{D_1}^2}{1-\beta^2} \right.\\
            &\left.+ \frac{(1 + 192/80^2)m}{n} +\frac{24\alpha^2 \normi{D_2}^2\Ln^2}{n(1-\beta^2)^2}}\frac{128\alpha^2\Ln^2 (\normi{V}^2 + 1)\tilde{\sigma}^2}{(1-\gamma\rho)^2},
        \end{aligned}
    \end{equation}\normalsize
    }
    where we invoked the choice of $\alpha$. 
    Note that {$\normi{\bu_0}^2 \leq\normi{\Fnorb{\1\bar{z}_0^{\T}}}^2/(80^2\ccD_1^2) + 2\normi{V^{-1}}^2\normi{\Pi\z_0}^2$ based on the definition of $\bu_0$ in Lemma \ref{lem:re} given that $\alpha\leq 1/(80\normi{D_2}\ccD_1)$.
    We obtain the upper bound for $\cL_0$ 
    from the definition of $\cL_0$ in \eqref{eq:cLk1k2}. Substituting the above upper bound of $\cL_0$ into \eqref{eq:Fnat_avg}, invoking the choice of $\alpha$ and $\gamma$ leads to the desired result \eqref{eq:E_Prox-grad}.}
    
    Noting the choice of $\alpha$, $\gamma$, and $m$ in \eqref{eq:alpha_order}, we have $1/(1-\beta^{2m})\leq 2$, and 
    \small
    \begin{equation}
        \label{eq:alpha_inv}
        \begin{aligned}
            &\frac{\Delta_\psi}{\alpha K} = {96\sqrt{\frac{\ccD_2\Delta_\psi\tilde{\sigma}^2}{nK}}} + \frac{\eta\Delta_\psi}{K},\; \alpha^2\leq {\frac{n\Delta_\psi}{96^2\ccD_2\tilde{\sigma}^2 K}},\\
            &\frac{\tilde{\sigma}^2}{mn}\leq {\sqrt{\frac{\ccD_2\Delta_\psi\tilde{\sigma}^2}{nK}}}.
        \end{aligned}
    \end{equation} \normalsize
    
    Substituting \eqref{eq:alpha_inv}, {$\ccD_1^2 = \orderi{(\normi{V}^2 + 1)}$ and  $\ccD_2^2 = \orderi{(\normi{V}^2 + 1)(\normi{D_1}^2 + 1)}$} into \eqref{eq:E_Prox-grad} leads to the desired result~\eqref{eq:EFnor_order}.

\end{appendices}

\bibliographystyle{IEEEtran}
\bibliography{references_all}

\onecolumn

\subsection{Proof of Lemma \ref{lem:norm-abc_cons_tw}}
    \label{app:norm-dspgd_cons_tw}

    According to \eqref{eq:cons_tw}, we have 
        \begin{equation}
            \label{eq:cons_tw_norm2}
            \begin{aligned}
                &\norm{\bu_{k_2}}^2 \leq \norm{\Gamma^{k_2 - k_1}\bu_{k_1}}^2 + 2\norm{\alpha\sum_{p=k_1}^{k_2 - 1}\Gamma^{k_2 - 1-p} D_1\Delta_p}^2 \\
                &\quad  + 2\norm{\alpha\sum_{p=k_1}^{k_2 - 1}\Gamma^{k_2 - 1-p}V^{-1}\begin{pmatrix}
                    \hLambda_a \hU^{\T}\brk{\Fnorb{\z_p} - \Fnorb{\barz_p} }\\
                    \hLambda_b^{-1}\hLambda_a\hU^{\T}\brk{\Fnorb{\barz_{p + 1}} - \Fnorb{\barz_p}}
                \end{pmatrix}}^2\\
                &\quad +2\inpro{\Gamma^{k_2 - k_1}\bu_{k_1}, \alpha\sum_{p=k_1}^{k_2 - 1}\Gamma^{k_2 - 1-p}V^{-1}\begin{pmatrix}
                    \hLambda_a \hU^{\T}\brk{\Fnorb{\z_p} - \Fnorb{\barz_p} }\\
                    \hLambda_b^{-1}\hLambda_a\hU^{\T}\brk{\Fnorb{\barz_{p + 1}} - \Fnorb{\barz_p}}
                \end{pmatrix}}\\
                &\quad + 2\inpro{\Gamma^{k_2 - k_1}\bu_{k_1}, \alpha\sum_{p=k_1}^{k_2 - 1}\Gamma^{k_2 - 1-p} D_1\Delta_p}.
            \end{aligned}
        \end{equation}
    
        Note that $\condE{\Delta_p}{\cF_p} = \0$ based on Assumption~\ref{as:abc}. Then, we have from tower property that $\condEi{\inproi{\Gamma^{k_2 - k_1}\bu_{k_1}, \sum_{p=k_1}^{k_2 - 1}\Gamma^{k_2 - 1-p} D_1\Delta_p}}{\cF_{k_1}} = 0$.
        
        For the other inner products in \eqref{eq:cons_tw_norm2}, we have from Young's inequality and Lemma \ref{lem:Fnor_Lip} that
        \begin{align}
            &2\inpro{\Gamma^{k_2 - k_1}\bu_{k_1}, \alpha\sum_{p=k_1}^{k_2 - 1}\Gamma^{k_2 - 1-p}V^{-1}\begin{pmatrix}
                \hLambda_a \hU^{\T}\brk{\Fnorb{\z_p} - \Fnorb{\barz_p} }\nonumber \\
                \hLambda_b^{-1}\hLambda_a\hU^{\T}\brk{\Fnorb{\barz_{p + 1}} - \Fnorb{\barz_p}}
            \end{pmatrix}}\\
                &\leq c\norm{\Gamma^{k_2 - k_1}\bu_{k_1}}^2 + \frac{1}{c}\norm{\alpha\sum_{p=k_1}^{k_2 - 1}\Gamma^{k_2 - 1-p}D_1\brk{\Fnorb{\z_p} - \Fnorb{\barz_p}}}^2\nonumber \\
                &\quad + \frac{1}{c}\norm{\alpha\sum_{p=k_1}^{k_2 - 1}\Gamma^{k_2 - 1-p}D_2\brk{\Fnorb{\barz_{p + 1}} - \Fnorb{\barz_p}}}^2\nonumber \\
                &\leq c \beta^{2m}\norm{\bu_{k_1}}^2 + \frac{\alpha^2 m \Ln^2 \prt{\norm{D_1}^2 + 4\alpha^2\Ln^2 \norm{D_2}^2}}{c}\sum_{p=k_1}^{k_2 - 1} \norm{\Pi\z_p}^2 + \frac{4\alpha^4 m n \norm{D_2}^2\Ln^4}{c}\sum_{p=k_1}^{k_2 -1}\norm{\bar{z}_{k_1} - \bar{z}_p}^2 \nonumber \\
                &\quad + \frac{4\alpha^4 m n \norm{D_2}^2\Ln^2 \beta^2_{0:m}}{c}\norm{\Fnor{\bar{z}_{k_1}}}^2 + \frac{4\alpha^4 m n \norm{D_2}^2\Ln^2}{c}\sum_{p=k_1}^{k_2 - 1}\beta^{2(k_2 - 1 - p)}\norm{\bar{\delta}_p}^2,\;\forall c>0,\label{eq:cons_inner}
        \end{align}
        where we invoked \eqref{eq:zq_diff} in the last inequality.
    
    
        In light of $\normi{\Pi\z_p}^2\leq \normi{V}^2\normi{\bu_p}^2$ for any $p>0$, combining \eqref{eq:cons_tw_norm2}-\eqref{eq:cons_inner}, and letting $c = (1-\beta^{2m})/(2\beta^{2m})$ lead to 
        \begin{equation}
            \label{eq:Euk}
            \begin{aligned}
                &\condE{\norm{\bu_{k_2}}^2}{\cF_{k_1}} 
                \leq \frac{1 + \beta^{2m}}{2}\norm{\bu_{k_1}}^2 + 2\s_{k_1:k_2} + \frac{2\alpha^2 m \Ln^2 \prt{\norm{D_1}^2 + 4\alpha^2\Ln^2 \norm{D_2}^2}}{1-\beta^{2m}}\sum_{p=k_1}^{k_2 - 1} \norm{\Pi\z_p}^2 \\
                &\quad + \frac{8\alpha^4 m n \norm{D_2}^2\Ln^4}{1-\beta^{2m}}\sum_{p=k_1}^{k_2 -1}\norm{\bar{z}_{k_1} - \bar{z}_p}^2+ \frac{8\alpha^4 m n \norm{D_2}^2\Ln^2 \beta^2_{0:m}}{1-\beta^{2m}}\norm{\Fnor{\bar{z}_{k_1}}}^2 \\
                &\quad + \frac{8\alpha^4 m n \norm{D_2}^2\Ln^2}{1-\beta^{2m}}\sum_{p=k_1}^{k_2 - 1}\beta^{2(k_2 - 1 - p)} \norm{\bar{\delta}_p}^2\\
                &\leq \frac{1 + \beta^{2m}}{2}\norm{\bu_{k_1}}^2  + \brk{\frac{2\alpha^2 m \Ln^2 \prt{\norm{D_1}^2 + 4\alpha^2\Ln^2 \norm{D_2}^2}}{1-\beta^{2m}} + 8\alpha^2 \C L\norm{D_1}^2 + \frac{32\alpha^4 m \norm{D_2}^2\Ln^2\C L}{n(1-\beta^{2m})} } \sum_{p=k_1}^{k_2 - 1} \norm{\Pi\z_p}^2\\
                &\quad + \prt{ \frac{8\alpha^4 m n \norm{D_2}^2\Ln^4}{1-\beta^{2m}} + 8\alpha^2 n\C\gamma^{-1}\norm{D_1}^2 + \frac{32\alpha^4 m \norm{D_2}^2\Ln^2\C \gamma^{-1}}{1-\beta^{2m}}}\sum_{p=k_1}^{k_2 -1}\norm{\bar{z}_{k_1} - \bar{z}_p}^2\\
                &\quad + \prt{\frac{8\alpha^4 m n \norm{D_2}^2\Ln^2\beta_{0:m}^2}{1-\beta^{2m}} + 4\alpha^2n\C\norm{D_1}^2\gamma\beta_{0:m}^{2} + \frac{16\alpha^4 m \norm{D_2}^2\Ln^2\C \gamma\beta_{0:m}^2}{1-\beta^{2m}} }\norm{\Fnor{\bar{z}_{k_1}}}^2\\
                &\quad + \prt{8\alpha^2 n\C\norm{D_1}^2\beta_{0:m}^2 +  \frac{32\alpha^4 m \norm{D_2}^2\Ln^2\C\beta_{0:m}^2}{1-\beta^{2m}} }\brk{\psi\prt{\proxp{\bar{z}_{k_1}}} - \bpsi}\\
                &\quad + \prt{2\alpha^2n \norm{D_1}^2\beta_{0:m}^2 + \frac{8\alpha^4 m \norm{D_2}^2\Ln^2\beta_{0:m}^2}{1-\beta^{2m}} }\prt{2\C\sigfn + \sigma^2}\\
                &\leq \frac{1 + \beta^{2m}}{2}\norm{\bu_{k_1}}^2  + \frac{2\alpha^2 m}{1-\beta^{2m}}\brk{\Ln^2 \prt{\norm{D_1}^2 + 1} + 4 \C L\prt{\norm{D_1}^2 + 1}} \sum_{p=k_1}^{k_2 - 1} \norm{\Pi\z_p}^2\\
                &\quad + \frac{8\alpha^2 mn}{1-\beta^{2m}}\prt{\Ln^2 + \frac{\C\normi{D_1}^2}{m\gamma} + \frac{\C}{n\gamma}}\sum_{p=k_1}^{k_2 -1}\norm{\bar{z}_{k_1} - \bar{z}_p}^2\\
                &\quad + 4\alpha^2n\beta_{0:m}^2\prt{\frac{2\alpha^2 m \norm{D_2}^2\Ln^2}{1-\beta^{2m}} + \C\norm{D_1}^2\gamma + \frac{4\alpha^2 m \norm{D_2}^2\Ln^2\C \gamma}{1-\beta^{2m}} }\norm{\Fnor{\bar{z}_{k_1}}}^2\\
                &\quad + 8\alpha^2n\beta_{0:m}^2 \C\prt{\norm{D_1}^2 +  \frac{4\alpha^2 m \norm{D_2}^2\Ln^2}{n(1-\beta^{2m})} }\brk{\psi\prt{\proxp{\bar{z}_{k_1}}} - \bpsi}\\
                &\quad + 2\alpha^2 n\beta_{0:m}^2 \prt{\norm{D_1}^2+ \frac{4\alpha^2 m \norm{D_2}^2\Ln^2}{n(1-\beta^{2m})} }\prt{2\C\sigfn + \sigma^2},
            \end{aligned}
        \end{equation}
        where we let $\alpha\leq 1/(4\normi{D_2}\Ln)$, and invoked \eqref{eq:bsk1p_split} and \eqref{eq:wg2psi} in the last inequality. 
        Letting $\alpha\leq 1/(4m\Ln)$ and $\gamma\leq 1/(4\C)$ yields the desired result.


\subsection{Detailed Proof of Lemma \ref{lem:cL}}
    \label{app:cL_detail}
    
    Note that $\brki{\psi\prt{\ux_{k_1}} - \bpsi}\leq \cH_{k_1}$ for any $k_1\geq 0$. Taking the conditional expectation on \eqref{eq:cH_descent} and substituting \eqref{eq:Eek1k2_ub} yields
    \begin{equation}
        \label{eq:cH_ek_d}
        \begin{aligned}
            &\condE{\cH_{k_2}}{\cF_{k_1}}\leq \prt{ 1 + 12\alpha \C H_1}\cH_{k_1} + { 3\alpha H_1}\prt{2\C\sigfn + \sigma^2} \\
            & - { \alpha m\brk{\frac{\cC_0}{2} - \frac{6\C\gamma}{mn} - 6\alpha^2m^2\ccD_1^2\prt{\norm{D_1}^2 + 2}}}\norm{\Fnor{\bar{z}_{k_1}}}^2+ \frac{24\alpha {\ccD_1^2}\beta_{0:m}^2\norm{\bu_{k_1}}^2}{n} .
        \end{aligned}
    \end{equation}
    
    Substituting \eqref{eq:sumk1k2_ub} into \eqref{eq:Euk_re} and noting $1-\beta^{2m} = \prti{1-\beta^2}\beta_{0:m}^2$ leads to 
    
    \begin{equation}
        \label{eq:Euk_ub_d}
        \begin{aligned}
            &\condE{\norm{\bu_{k_2}}^2}{\cF_{k_1}}\leq \prt{\frac{1 + \beta^{2m}}{2} + {\frac{64\alpha^2m \ccD_2^2 }{1-\beta^{2}}}}\norm{\bu_{k_1}}^2 +  4\alpha^2n\beta_{0:m}^2 \brk{ { H_2} + \frac{4\alpha^2m^4 {\ccD_2^2(\normi{D_1}^2 + 2)}}{\prt{1-\beta^{2m}}\beta_{0:m}^2}} \norm{\Fnor{\bar{z}_{k_1}}}^2\\
            &+ { 2\alpha^2n\beta_{0:m}^2} \brk{{ H_2 + \frac{32\alpha^2m^2\ccD_2^2 \normi{D_1}^2}{1-\beta^{2m}} + \frac{32\alpha^2 m^3\ccD_2^2}{n\beta^2_{0:m}(1-\beta^{2m})}}} \crk{4\C \brk{\psi\prt{\proxp{\bar{z}_{k_1}}} - \bpsi} + 2\C \sigfn + \sigma^2}.
        \end{aligned}
    \end{equation}

    Substituting \eqref{eq:cH_ek_d} and \eqref{eq:Euk_ub_d} into \eqref{eq:cLk1k2} leads to
    \begin{equation}
        \label{eq:cL_d}
        \begin{aligned}
            &\condE{\cL_{k_2}}{\cF_{k_1}} \leq \brk{1 + 12\alpha \C H_1 + \frac{400\alpha^3 \beta^2_{0:m} \C \ccD_1^2}{1-\beta^2}\prt{H_2 + \frac{32\alpha^2m^2\ccD_2^2 \norm{D_1}^2}{1-\beta^{2m}} + \frac{96\alpha^2 m^3\ccD_2^2}{n\beta^2_{0:m}(1-\beta^{2m})}}}\cL_{k_1} \\
            &\quad + \brk{\frac{24\alpha \ccD_1^2\beta_{0:m}^2}{n} + \prt{\frac{1 + \beta^{2m}}{2} + \frac{64\alpha^2m \ccD_2^2 }{1-\beta^{2}}} \frac{50\alpha\ccD_1^2}{n\prti{1-\beta^2}}}\norm{\bu_{k_1}}^2\\
            &\quad - \alpha m\brk{\frac{\cC_0}{2} - \frac{6\C\gamma}{mn} - 6\alpha^2m^2\ccD_1^2\prt{\norm{D_1}^2 + 2} - \frac{200\alpha^2 \ccD_1^2 \norm{D_1}^2 \beta^2_{0:m}}{m(1-\beta^2)} - \frac{800 \alpha^4\Ln^2\ccD_1^2\norm{D_2}^2}{(1-\beta^2)^2} \right.\\
            &\quad \left.- \frac{800\alpha^4 m^3\ccD_1^2\ccD_2^2\prt{\norm{D_1}^2 + 2}}{(1-\beta^2)(1-\beta^{2m})}}\norm{\Fnor{\bar{z}_{k_1}}}^2\\
            &\quad + \brk{3\alpha H_1 + \frac{100\alpha^3\beta_{0:m}^2\ccD_1^2}{1-\beta^2}\prt{H_2 + \frac{32\alpha^2m^2\ccD_2^2 \norm{D_1}^2}{1-\beta^{2m}} + \frac{96\alpha^2 m^3\ccD_2^2}{n\beta^2_{0:m}(1-\beta^{2m})}}} \prt{2\C\sigfn + \sigma^2}\\
        \end{aligned}
    \end{equation}

    To obtain the desired result in \eqref{eq:cL}, we first simplify the coefficient of $\normi{\bu_{k_1}}^2$:
    \begin{align*}
        \tc_3:=\frac{24\alpha \ccD_1^2\beta_{0:m}^2}{n} + \prt{\frac{1 + \beta^{2m}}{2} + \frac{64\alpha^2m \ccD_2^2 }{1-\beta^{2}}} \frac{50\alpha\ccD_1^2}{n\prti{1-\beta^2}}.
    \end{align*}
    Letting $\alpha\leq \sqrt{(1-\beta^2)(1-\beta^{2m})}/(40\sqrt{2m} \ccD_2)$ leads to $\tc_3\leq 50\alpha \ccD_1^2/[n(1-\beta^2)]$.

    We next simplify the coefficient of $\normi{\Fnor{\bar{z}_{k_1}}}^2$ in \eqref{eq:cLk1k2} below. 
    \begin{align*}
       &\tc_4:= - \alpha m\brk{\frac{\cC_0}{2} - \frac{6\C\gamma}{mn} - 6\alpha^2m^2\ccD_1^2\prt{\norm{D_1}^2 + 2}  - \frac{200\alpha^2 \ccD_1^2 \norm{D_1}^2 \beta^2_{0:m}}{m(1-\beta^2)} - \frac{800 \alpha^4\Ln^2\ccD_1^2\norm{D_2}^2}{(1-\beta^2)^2}\right.\\
       &\left. - \frac{800\alpha^4 m^3\ccD_1^2\ccD_2^2\prt{\norm{D_1}^2 + 2}}{(1-\beta^2)(1-\beta^{2m})}}\leq -\frac{\alpha m}{9},
    \end{align*}
    where we let invoked conditions on $\alpha$ and $\gamma$:
    \begin{align*}
        \alpha&\leq\min\crk{\frac{1}{80m\ccD_1\sqrt{\normi{D_1}^2 + 2}}, \frac{1-\beta^2}{80\ccD_1\normi{D_1}}, \frac{\sqrt{1-\beta^2}}{80\normi{D_2}\ccD_1}, \frac{\sqrt{1-\beta^2}}{80\Ln^2},\frac{\sqrt{(1-\beta^2)(1-\beta^{2m})}}{80\sqrt{m}\ccD_2}},\\
        \gamma&\leq \min\crk{\frac{1}{18\C}, \frac{1}{5(L + \rho)}}.
    \end{align*}
    
    The coefficient of $(2\C\sigfn + \sigma^2)$ is simplified as follows:
    \begin{align*}
        \tc_5&:=3\alpha H_1 + \frac{100\alpha^3\beta_{0:m}^2\ccD_1^2}{1-\beta^2}\prt{H_2 + \frac{32\alpha^2m^2\ccD_2^2 \norm{D_1}^2}{1-\beta^{2m}} + \frac{96\alpha^2 m^3\ccD_2^2}{n\beta^2_{0:m}(1-\beta^{2m})}}\\
        &= \frac{3\alpha}{n}\brk{1 + 8\alpha^2m^2\ccD_1^2  + \frac{3200\alpha^4 m^3\ccD_1^2 \ccD_2^2}{(1-\beta^2)(1-\beta^{2m})}} + 4\alpha^3\norm{D_1}^2\ccD_1^2  \brk{6m\beta_{0:m}^2 + \frac{25\beta_{0:m}^2}{1-\beta^2} + \frac{800\alpha^2 \beta_{0:m}^2\ccD_2^2}{(1-\beta^2)(1-\beta^{2m})}} \\
        &\quad + \frac{400\alpha^5 m\beta_{0:m}^2\ccD_1^2\normi{D_2}^2\Ln^2}{n(1-\beta^2)(1-\beta^{2m})}\\
        &\leq \frac{5\alpha}{n} + \frac{133\alpha^3 m\normi{D_1}^2\ccD_1^2}{1-\beta^2} + \frac{400\alpha^5 m\beta_{0:m}^2\ccD_1^2\normi{D_2}^2\Ln^2}{n(1-\beta^2)(1-\beta^{2m})},
    \end{align*}
    where we invoked $\beta_{0:m}^2(1-\beta^2) = 1-\beta^{2m}$ and 
    \begin{align*}
        \alpha&\leq\min\crk{\frac{1}{80m\ccD_1},\frac{\sqrt{(1-\beta^2)(1-\beta^{2m})}}{80\sqrt{m}\ccD_2}}
    \end{align*}
    in the last inequality.

    Noting the above simplified coefficients yields the desired result \eqref{eq:cL}.

\subsection{Proof of Lemma \ref{lem:sum_dist}}
    \label{app:sum_dist}
    
    According to Lemma \ref{lem:Fnor_Lip}, we have 
    \begin{equation}
        \label{eq:Fnor_inter}
        \norm{\Fnor{\bar{z}_{\tk_t + q}}}^2 \leq 2\Ln^2 \norm{\bar{z}_{\tk_t} - \bar{z}_{\tk_t + q}}^2 + 2\norm{\Fnor{\bar{z}_{\tk_t}}}^2,\; \forall t\geq 0,\; 0\leq q\leq m-1.
    \end{equation}
    
    Substituting \eqref{eq:Fnor_inter} into \eqref{eq:sum_norm} leads to
    \begin{equation}
        \label{eq:sum_norm_app}
        \begin{aligned}
            \sum_{k=0}^{K-1} \E\brk{\norm{\Fnor{\bar{z}_k}}^2}
            &\leq 2\Ln^2 \sum_{t=0}^T \sum_{q=0}^{m-1}\E\brk{\norm{\bar{z}_{\tk_t} - \bar{z}_{\tk_t + q}}^2} + 2m \sum_{t=0}^T \E\brk{\norm{\Fnor{\bar{z}_{\tk_t}}}^2}.
        \end{aligned}
    \end{equation}
    
    The consensus error can be bounded similarly as follows:
    
    \begin{equation}
        \label{eq:sum_cons}
        \begin{aligned}
            \sum_{k=0}^{K-1}\frac{1}{n}\norm{\Pi\z_k}^2 \leq \sum_{t=0}^T \sum_{q=0}^{m-1}\frac{1}{n}\norm{\Pi\z_{\tk_t + q}}^2\leq \sum_{t=0}^T \sum_{q=0}^{m-1}\frac{\norm{V}^2}{n}\norm{\bu_{\tk_t + q}}^2.
        \end{aligned}
    \end{equation}
    
    Thus, substituting \eqref{eq:sum_norm_app} and \eqref{eq:sum_cons} into \eqref{eq:Fnat2Fnor} leads to the desired result. 

\end{document}